\newtheorem{theorem}{Theorem}
\newtheorem{lemma}{Lemma}
\newtheorem{remark}{Remark}
\newtheorem{corollary}{Corollary}
\newtheorem{proposition}{Proposition}
\def\Frac#1#2{\frac{\displaystyle{#1}}{\displaystyle{#2}}}
\begin{document}

\begin{frontmatter}

\title{Monotonicity properties and bounds for the chi-square and gamma distributions.}

\author{Javier Segura}
\address{Departamento de Matem\'aticas, Estad\'{\i}stica y Computaci\'on.

Facultad de Ciencias. 

Universidad de Cantabria. 

39005-Santander, SPAIN. 

javier.segura@unican.es
}




\begin{abstract}
The generalized Marcum functions $Q_{\mu}(x,y)$ and $P_{\mu}(x,y)$ 
have as particular cases the non-central $\chi^2$ 
and gamma cumulative distributions, which become central distributions (incomplete
gamma function ratios) when the 
non-centrality parameter $x$ is set to zero. We analyze monotonicity and convexity 
properties for the generalized Marcum functions and for ratios of Marcum functions
of consecutive parameters (differing in one unity) and we obtain upper and lower
bounds for the Marcum functions. These bounds are proven to be sharper than previous 
estimations for a wide range of the parameters. 
Additionally we show how to build convergent sequences of upper and lower bounds. The
particularization to incomplete gamma functions, together with some additional bounds
obtained for this particular case, lead to combined bounds which improve previously 
exiting inequalities. 
\end{abstract}

\begin{keyword}
Generalized Marcum function \sep cumulative chi-square and gamma distributions \sep
bounds \sep monotonicity \sep convexity.
\MSC[2010] 33E20,33B20,26D07,26D15
\end{keyword}

\end{frontmatter}


\section{Introduction and definitions}

Generalized Marcum functions are defined as
\begin{equation}
\label{eq:defQmu}
Q_{\mu} (x,y)=\displaystyle x^{\frac12 (1-\mu)} \int_y^{+\infty} t^{\frac12 (\mu -1)} e^{-t-x} I_{\mu -1} \left(2\sqrt{xt}\right) \,dt,
\end{equation}
where $\mu >0$ and $I_\mu(z)$ is the modified Bessel function 
\cite[10.25.2]{Olver:2010:BF}. The generalized Marcum $Q$-function is an important function
 used in radar detection and communications. They also occur in statistics and probability theory, where they are called
non-central chi-square or non central gamma cumulative distributions (see \cite{Gil:2014:COT} and references
cited therein).

The complementary function is,
\begin{equation}
\label{eq:defPmu}
P_{\mu} (x,y)=\displaystyle x^{\frac12 (1-\mu)}\int_0^{y} t^{\frac12 (\mu -1)} e^{-t-x} I_{\mu -1} \left(2\sqrt{xt}\right) \,dt,
\end{equation}
and for $\mu>0$ and $x,y\ge 0$ we have
\begin{equation}\label{eq:PQcompl}
P_{\mu}(x,y)+Q_{\mu} (x,y)=1.
\end{equation}

The central chi-square or gamma cumulative distributions $P(a,y)$ and $Q(a,y)$ are a particular case of the non-central
distributions with non-centrality parameter $x$ equal to zero: $P(a,y)=P_{a}(0,y)$, $Q(a,y)=P_{a}(0,y)$. 
 These are functions related to the incomplete gamma function ratios
\begin{equation}\label{eq:int02}
P(a,y)=\frac{1}{\Gamma(a)}\gamma(a,y), \quad
Q(a,y)=\frac{1}{\Gamma(a)}\Gamma(a,y),
\end{equation}
where
\begin{equation}\label{eq:int01}
\gamma(a,y)=\int_0^y t^{a-1} e^{-t}\,dt, \quad
\Gamma(a,y)=\int_y^{\infty} t^{a-1} e^{-t}\,dt .
\end{equation}

There are other notations for the generalized Marcum function in the literature. Among them, probably the most
 popular is the following
\begin{equation}
\widetilde{Q}_{\mu} (\alpha,\beta)=\alpha^{1-\mu}\int_\beta^{+\infty}t^{\mu} e^{-(t^2+\alpha^2)/2} I_{\mu -1}(\alpha t) dt,
\end{equation}
where we have added a tilde in the definition to distinguish it from the definition we are using (\ref{eq:defQmu}). For
$\mu=1$ this coincides with the original definition of the Marcum $Q$-function \cite{Marcum:1960:AST}.
The relation with the notation we use is simple:
\begin{equation}
Q_{\mu}(x,y)=\widetilde{Q}_{\mu} (\sqrt{2x},\sqrt{2y}),
\end{equation}
and similarly for the $P$ function.

Our notation for the $P$ and $Q$ functions is directly related to the $\chi^2$ cumulative distribution
function ${\mathbf P}(x;k,\lambda)$ by the relation
\begin{equation}
{\mathbf P}(x;k,\lambda)=P_{k/2}(\lambda /2, x/2).
\end{equation}
In the $\chi^2$ cumulative distribution, integer values of $k$ appear. We consider the more general case of real
positive $k$,
 in which case the distributions $Q$ and $P$ are also called noncentral gamma distributions.

In this paper we study monotonicity and convexity properties for the cumulative distribution functions 
defined by 
(\ref{eq:defPmu}) and (\ref{eq:PQcompl}), and for ratios of functions of consecutive orders, as well as 
bounds these functions.

We start by summarizing some basic properties of $P_{\mu}(x,y)$ and $Q_{\mu}(x,y)$, including monotonicity and convexity
(section \ref{basic}).
Then, in section \ref{noncentral}, monotonicity properties and bounds for the ratios of functions of consecutive orders $P_{\mu+1}(x,y)/P_{\mu}(x,y)$
and $Q_{\mu+1}(x,y)/Q_{\mu}(x,y)$ are obtained, which lead to bounds for $P_{\mu}(x,y)$ and $Q_{\mu}(x,y)$ 
in terms of two modified Bessel functions of consecutive orders. 
It is also discussed how to obtain convergent sequences of 
upper and lower bounds. Particularizing for $x=0$ we obtain
bounds for the central case; additional bounds for the central case are also obtained from new monotonicity properties
for the incomplete gamma function ratios (section \ref{central}).  Finally, in section \ref{compar} we
compare our new bounds with previous bounds and we conclude that the new bounds for the noncentral distributions 
improve previous results (of similar complexity) 
for a wide range of the parameters, and that combined bounds in the central case improve
previous existing bounds in the full range of parameters.

\section{Basic properties}
\label{basic}

Considering integration by parts together with the relation $z^\mu I_{\mu-1}(z)=\frac{d}{dz}\left(z^\mu I_\mu(z)\right)$
we get

\begin{equation}
\label{RecQ}
\begin{array}{ll}
\displaystyle{Q_{\mu+1}(x,y)=Q_{\mu} (x,y)+\left(\Frac{y}{x}\right)^{\mu/2} e^{-x-y} I_{\mu} (2\sqrt{xy})},\\[8pt]
\displaystyle{P_{\mu+1}(x,y)=P_{\mu} (x,y)-\left(\Frac{y}{x}\right)^{\mu/2} e^{-x-y} I_{\mu} (2\sqrt{xy})},
\end{array}\end{equation}
where the relation for $Q$ holds for all real $\mu$ while for $P$ it only holds for $\mu>0$.

From this, we can obtain the following recurrence
\begin{equation}
\label{TTRR}
y_{\mu+1} -(1+c_\mu) y_\mu + c_\mu y_{\mu-1}=0,\quad c_\mu=\sqrt{\frac{y}{x}}\Frac{I_\mu \left(2\sqrt{xy}\right)}{I_{\mu-1}\left(2\sqrt{xy}\right)},
\end{equation}
satisfied both by $Q_\mu (x ,y)$ and $P_\mu (x ,y)$.

From \cite[10.41.1]{Olver:2010:BF} we see that $c_{\mu}={\cal O}(\mu^{-1})$ as $\mu\rightarrow +\infty$; Perron-Kreuser theorem 
\cite[Theorem 4.5]{Gil:2007:NSF}
guarantees that the recurrence has a minimal solution such that $y_{\mu+1}/y_{\mu}\sim c_{\mu}$ as $\mu\rightarrow +\infty$.
This corresponds with the $P_{\mu}(x,y)$ function. The dominant
solutions of the recurrence are such that $y_{\mu+1}/y_{\mu}\sim 1$, and this corresponds to the case of the  $Q_{\mu}(x,y)$
function. Therefore

\begin{equation}
\label{limimu}
\lim_{\mu\rightarrow +\infty}\Frac{1}{c_{\mu}(x,y)}\Frac{P_{\mu}(x,y)}{P_{\mu-1}(x,y)}=1,\,
\lim_{\mu\rightarrow +\infty}\Frac{Q_{\mu}(x,y)}{Q_{\mu -1}(x,y)}=1.
\end{equation}
Later we will prove that $P_{\mu}/P_{\mu-1}(x,y)<c_{\mu}(x,y)\le\mu/y$ and $P_{\mu}/P_{\mu-1}(x,y)<1$.

Applying $n$ times the backward recurrence for the $P$-function (\ref{RecQ}) we have
\begin{equation}
\label{sumaPs}
P_{\mu}(x,y)=P_{\mu+n+1}(x,y)+\displaystyle\sum_{k=0}^{n}F_{\mu+n}(x,y),\,\mu>0,
\end{equation}
where
\begin{equation}
\label{Fmu}
F_{\mu}(x,y)=\left(\Frac{y}{x}\right)^{\mu /2}e^{-x-y}I_{\mu}(2\sqrt{xy}). 
\end{equation}
And because $P_{\mu}(x,y)\rightarrow 0$ as $\mu\rightarrow +\infty$,
\begin{equation}
\label{seriey}
P_{\mu}(x,y)=\displaystyle\sum_{k=0}^{\infty}F_{\mu+k}(x,y)=e^{-x-y}
\displaystyle\sum_{k=0}^{\infty}\left(\Frac{y}{x}\right)^{\frac{\mu +k}{2}} I_{\mu+k}(2\sqrt{xy}).
\end{equation}

Taking the derivative with respect to $y$ in (\ref{eq:defQmu}) and using (\ref{RecQ}) we have
\begin{equation}
\label{de1}
\Frac{\partial Q_\mu(x,y)}{\partial y}=Q_{\mu-1}(x,y)-Q_{\mu}(x,y) .
\end{equation}

Taking the derivative with respect to $x$, and using the relation $I_{\nu}'(z)=I_{\nu+1}(z)+\frac{\nu}{z}I_{\nu}(z)$, we obtain
\begin{equation}
\label{de2}
\Frac{\partial Q_\mu(x,y)}{\partial x}=Q_{\mu+1}(x,y)-Q_{\mu}(x,y)=-\Frac{\partial Q_{\mu +1}(x,y)}{\partial y} .
\end{equation}

The same relations hold for the $P$-functions.

Using (\ref{RecQ}) we see that $Q_\mu (x,y)$ ($P_\mu (x,y)$) is an increasing (decreasing) 
function of $x$ and a decreasing (increasing) function of $y$. 
With respect to $\mu$, $Q_\mu(x,y)$ is increasing and $P_\mu(x,y)$ is decreasing \cite{Sun:2008:IFT}.

We have the following particular values
\begin{equation}\label{qmval}
\begin{array}{ll}
Q_{\mu} (x, 0)=1,&Q_{\mu}(x ,+\infty)=0,\\[8pt]
Q_{\mu} (0,y)=Q_\mu(y),\quad& Q_{\mu}(+\infty,y)=1,\\[8pt]
Q_{+\infty} (x ,y )=1,&
\end{array}
\end{equation}
and similar complementary relations for $P_\mu(x,y)$.

Usually, we will consider Marcum functions $Q_{\mu}(x,y)$ and $P_{\mu}(x,y)$ for positive $\mu$, where
both functions are positive and (\ref{eq:PQcompl}) holds. However, in order to extend the validity for some 
of the bounds we will obtain for the $Q$-function, it will be
 convenient to consider smaller values of $\mu$.
The $Q$-function defined in Eq. (\ref{eq:defQmu})
is continuous for any real value of $\mu$; the same is not true for $P_{\mu}(x,y)$ which is only defined for 
$\mu\ge 0$ and is discontinuous from the right at $\mu=0$. It 
is easy to show that, using the definition of the $Q$ and $P$ functions and \cite[4.16 (16)]{Erdelyi:1954:TIT}
$$
Q_0 (x,y)+ P_0 (x,y) =1-e^{-x},
$$
and (\ref{eq:PQcompl}) does not hold because of the discontinuity of $P_{\mu}(x,y)$ at $\mu=0$. Also, the recurrence 
(\ref{RecQ})  does not hold for $\mu=0$; instead, we have $P_1(x,y)=P_0 (x,y)-F_0 (x,y) +e^{-x}$.

For negative $\mu$, $Q_{\mu}(x,y)$ satisfies the same recurrence relations as for positive $\mu$ and the series
in terms of incomplete gamma functions (obtained by using the Frobenius series for $I_{\mu-1}(z)$ in the
definition (\ref{eq:defQmu})) also holds, namely:
\begin{equation}
\label{qserq}	
Q_{\mu}(x,y)=e^{-x}\displaystyle\sum_{k=0}^{\infty}\Frac{x^k}{k!}Q_{\mu+k}(y).
\end{equation}

We observe that \cite[10.27.2]{Olver:2010:BF} 
$$
I_{-\nu}(z)=I_{\nu}(z)+\Frac{2}{\pi}\sin\nu\pi K_{\nu}(z)
$$
implies that $Q_{\mu}(x,y)>0$ if $\mu\in [-2k,-2k+1]$, $k\in {\mathbb N}$, but that $Q_{\mu}(x,y)$ can be negative for other
negative values of $\mu$, and particularly when $x$ is small enough; indeed $Q_{\mu}(0,y)=Q_{\mu}(y)<0$ for these values 
of $\mu$ because $Q_{\mu}(y)=\Gamma (\mu,y)/\Gamma (\mu)$ and $\Gamma (\mu)<0$. However, for sufficiently
large $x$, $Q_{\mu}(x,y)$ becomes positive. In particular, we have the following:

\begin{proposition}
Let $\mu_0 \in (-1,0)$ and $y>0$, if
\begin{equation}
\label{Qmuco}
x\ge y^{\mu_0}e^{-y}/\Gamma (\mu_0+1,y)-1
\end{equation}
then $Q_{\mu_0}(x,y)>0$.

\end{proposition}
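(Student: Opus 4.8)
The plan is to argue directly from the series (\ref{qserq}), which the excerpt records as valid for negative $\mu$:
\[
Q_{\mu_0}(x,y)=e^{-x}\sum_{k=0}^{\infty}\frac{x^k}{k!}Q_{\mu_0+k}(y).
\]
For $\mu_0\in(-1,0)$ one has $\mu_0+k>0$ for every $k\ge 1$, so $Q_{\mu_0+k}(y)=\Gamma(\mu_0+k,y)/\Gamma(\mu_0+k)>0$ there; the only negative term is the one with $k=0$, since $Q_{\mu_0}(y)=\Gamma(\mu_0,y)/\Gamma(\mu_0)$ with $\Gamma(\mu_0,y)>0$ and $\Gamma(\mu_0)<0$. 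Hence it will be enough to show that the $k=0$ and $k=1$ terms already add up to something nonnegative, the rest of the series being a sum of strictly positive terms as soon as $x>0$.

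So I reduce the problem to $Q_{\mu_0}(y)+x\,Q_{\mu_0+1}(y)\ge 0$. Since $Q_{\mu_0+1}(y)>0$ (because $\mu_0+1\in(0,1)$) and $Q_{\mu_0}(y)<0$, this is equivalent to
\[
x\ge-\frac{Q_{\mu_0}(y)}{Q_{\mu_0+1}(y)}
=-\frac{\Gamma(\mu_0,y)}{\Gamma(\mu_0+1,y)}\cdot\frac{\Gamma(\mu_0+1)}{\Gamma(\mu_0)}
=\frac{(-\mu_0)\,\Gamma(\mu_0,y)}{\Gamma(\mu_0+1,y)},
\]
where I used $\Gamma(\mu_0+1)=\mu_0\Gamma(\mu_0)$. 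Now the contiguous relation $\Gamma(\mu_0+1,y)=\mu_0\,\Gamma(\mu_0,y)+y^{\mu_0}e^{-y}$ gives $(-\mu_0)\,\Gamma(\mu_0,y)=y^{\mu_0}e^{-y}-\Gamma(\mu_0+1,y)$, so the right-hand side above is exactly $y^{\mu_0}e^{-y}/\Gamma(\mu_0+1,y)-1$, i.e.\ the threshold appearing in (\ref{Qmuco}). The same identity shows that this threshold is strictly positive, so the hypothesis in fact forces $x>0$.

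Putting the pieces together: under (\ref{Qmuco}) we have both $Q_{\mu_0}(y)+x\,Q_{\mu_0+1}(y)\ge 0$ and $x>0$, and therefore
\[
e^{x}\,Q_{\mu_0}(x,y)=\left(Q_{\mu_0}(y)+x\,Q_{\mu_0+1}(y)\right)+\sum_{k=2}^{\infty}\frac{x^k}{k!}Q_{\mu_0+k}(y)>0,
\]
because the tail is a sum of strictly positive terms. I do not anticipate a genuine obstacle here: the argument is essentially sign bookkeeping (tracking which $\Gamma$ values are negative) together with one use of the incomplete-gamma recurrence. The only point requiring a little care is to keep only the $k=0$ and $k=1$ terms on the side where the inequality is not automatic, discarding the manifestly positive tail, so that the resulting condition on $x$ comes out exactly as the sharp threshold in (\ref{Qmuco}).
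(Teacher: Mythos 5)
Your argument is correct and is essentially the paper's own proof: both expand $Q_{\mu_0}(x,y)$ via the series (\ref{qserq}), observe that only the $k=0$ term can be negative, reduce the claim to $Q_{\mu_0}(y)+xQ_{\mu_0+1}(y)\ge 0$, and then convert this to the stated threshold using the first-order inhomogeneous recurrence at $x=0$ (your contiguous relation $\Gamma(\mu_0+1,y)=\mu_0\Gamma(\mu_0,y)+y^{\mu_0}e^{-y}$ is exactly that recurrence). The only difference is cosmetic: you additionally note that the threshold is strictly positive, which tidily settles the equality case $x=y^{\mu_0}e^{-y}/\Gamma(\mu_0+1,y)-1$ via the strictly positive tail.
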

\begin{proof}
Using Eq. (\ref{qserq}), we see that $Q_{\mu_0}(x,y)>0$ if $Q_{\mu_0}(y)+xQ_{\mu_0+1}(y)>0$ 
(because $Q_{\mu_0+k}(y)>0$, $k=2,3,\ldots$). Then,
considering the recurrence (\ref{RecQ}) for $x=0$ ($F_{\mu}(0,y)=y^{\mu}e^{-y}/\Gamma (\mu+1)$) the result
is proved.  
\end{proof}

Using an inequality we will prove later (Eq. (\ref{boundGAM})), we have:
\begin{corollary}
\label{primomo}
Let $y>0$ and $\mu_0 \in (-1,0)$,  if
$$
x\ge L_{\mu_0}(y)=\Frac{\sqrt{(y-\mu_0-2)^2+4y}-y-\mu_0}{2y}
$$
then $Q_{\mu}(x,y)>0$ for all $\mu\ge \mu_0$.
\end{corollary}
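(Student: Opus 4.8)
The plan is to combine the Proposition with the (yet-to-be-proved) inequality in Eq.~(\ref{boundGAM}), and then upgrade the conclusion $Q_{\mu_0}(x,y)>0$ to the statement $Q_\mu(x,y)>0$ for \emph{all} $\mu\ge\mu_0$ using the monotonicity of $Q_\mu$ in $\mu$. First I would recall from the Proposition that $Q_{\mu_0}(x,y)>0$ whenever $x\ge y^{\mu_0}e^{-y}/\Gamma(\mu_0+1,y)-1$; since $\mu_0\in(-1,0)$ we have $\mu_0+1\in(0,1)$, so $\Gamma(\mu_0+1,y)>0$ and the right-hand side is a well-defined real number. The idea is that the threshold $y^{\mu_0}e^{-y}/\Gamma(\mu_0+1,y)-1$ is awkward to handle because it involves the incomplete gamma function $\Gamma(\mu_0+1,y)$ in the denominator; the inequality~(\ref{boundGAM}) will provide a lower bound on $\Gamma(\mu_0+1,y)$ (equivalently, an upper bound on $y^{\mu_0}e^{-y}/\Gamma(\mu_0+1,y)$) of simple algebraic form, and substituting that bound should produce exactly the closed-form quantity $L_{\mu_0}(y)=\bigl(\sqrt{(y-\mu_0-2)^2+4y}-y-\mu_0\bigr)/(2y)$.

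Concretely, the second step is the algebra: I would write the sufficient condition from the Proposition as $\Gamma(\mu_0+1,y)\,(x+1)\ge y^{\mu_0}e^{-y}$, i.e. $Q_{\mu_0+1}(y)(x+1)\ge F_{\mu_0}(0,y)/\Gamma(\mu_0+1)$ in the notation used there, and then invoke (\ref{boundGAM}) to replace $\Gamma(\mu_0+1,y)$ by its lower bound. The bound (\ref{boundGAM}), whatever its precise form, should after clearing denominators give a quadratic inequality in $x$ (the $y$ in the denominator of $L_{\mu_0}$ and the square root strongly suggest that $L_{\mu_0}(y)$ is the positive root of $yx^2+(y+\mu_0)x-1=0$, or something very close). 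Solving that quadratic and taking the relevant (positive) root yields $x\ge L_{\mu_0}(y)$ as a condition that implies the Proposition's condition, hence implies $Q_{\mu_0}(x,y)>0$.

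The final step is the extension from $\mu_0$ to all $\mu\ge\mu_0$. For this I would invoke the fact, recorded in the Basic Properties section (citing \cite{Sun:2008:IFT}), that $Q_\mu(x,y)$ is increasing in $\mu$; alternatively one can argue directly from the recurrence (\ref{RecQ}), since $Q_{\mu+1}(x,y)=Q_\mu(x,y)+F_\mu(x,y)$ with $F_\mu(x,y)=(y/x)^{\mu/2}e^{-x-y}I_\mu(2\sqrt{xy})>0$ for $x>0$. Either way, once $Q_{\mu_0}(x,y)>0$ we get $Q_\mu(x,y)\ge Q_{\mu_0}(x,y)>0$ for every $\mu\ge\mu_0$, which is the claim. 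I also note that for $x=0$ one should check separately (or note continuity) that the statement degenerates correctly; but since $L_{\mu_0}(y)>0$ strictly for $y>0$ and $\mu_0\in(-1,0)$, the hypothesis $x\ge L_{\mu_0}(y)$ forces $x>0$, so the positivity of $F_\mu$ is never in question.

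The main obstacle is purely the second step: I expect everything hinges on having (\ref{boundGAM}) in hand with the right constants, and on the bookkeeping of simplifying $y^{\mu_0}e^{-y}/\Gamma(\mu_0+1,y)-1$ down to $L_{\mu_0}(y)$. The monotonicity extension is immediate, and the appeal to the Proposition is verbatim; so the real content is checking that the quoted inequality (\ref{boundGAM}) is exactly strong enough — neither weaker (which would fail to give $L_{\mu_0}$) nor requiring more than $\mu_0+1\in(0,1)$ — and then identifying the positive root of the resulting quadratic with the stated $L_{\mu_0}(y)$.
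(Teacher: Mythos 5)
Your first two steps are essentially the paper's: and they are in fact simpler than you anticipate. Putting $a=\mu_0+1$ in (\ref{boundGAM}) gives
$y^{\mu_0}e^{-y}/\Gamma(\mu_0+1,y)<\bigl(y-\mu_0+\sqrt{(y-\mu_0-2)^2+4y}\bigr)/(2y)$,
and subtracting $1$ from the right-hand side yields exactly $L_{\mu_0}(y)$; so $x\ge L_{\mu_0}(y)$ implies condition (\ref{Qmuco}) by direct substitution (no quadratic in $x$ has to be solved, and your guessed quadratic $yx^2+(y+\mu_0)x-1=0$ is not quite the one $L_{\mu_0}$ satisfies, though that is immaterial). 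This correctly gives $Q_{\mu_0}(x,y)>0$.

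The genuine gap is the extension to all $\mu\ge\mu_0$. The recurrence (\ref{RecQ}) advances $\mu$ only by integer steps, so from $Q_{\mu_0}>0$ it gives $Q_{\mu_0+n}>0$ for $n\in\mathbb{N}$ and says nothing about the remaining $\mu\in(\mu_0,0)$. The monotonicity in $\mu$ quoted from \cite{Sun:2008:IFT} is a statement for positive $\mu$, and it fails in general on $(-1,0)$: for $x=0$ one has $Q_{\mu}(0,y)=\Gamma(\mu,y)/\Gamma(\mu)<0$ on $(-1,0)$ with $Q_{\mu}(0,y)\to 0^-$ both as $\mu\to-1^+$ and as $\mu\to 0^-$, which is incompatible with being increasing there, and by continuity (see (\ref{qserq})) the failure persists for small $x>0$. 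Whether monotonicity holds under the largeness assumption $x\ge L_{\mu_0}(y)$ is precisely something you would have to prove, so you cannot invoke it. The paper closes this range by a different observation: $L_{\mu}(y)$ is strictly decreasing in $\mu$ (its $\mu$-derivative is $\frac{1}{2y}\bigl(-(y-\mu-2)/\sqrt{(y-\mu-2)^2+4y}-1\bigr)<0$), hence $x\ge L_{\mu_0}(y)\ge L_{\mu}(y)$ for every $\mu\in[\mu_0,0)$, and the Proposition-plus-(\ref{boundGAM}) argument of your first step applies verbatim at each such $\mu$; for $\mu\ge 0$ positivity of $Q_{\mu}(x,y)$ is immediate from the definition (\ref{eq:defQmu}) since $I_{\mu-1}>0$ there. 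Replacing your final paragraph by this argument repairs the proof.
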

\begin{proof}
Considering the bound (\ref{boundGAM}) it is immediate to check that 
the condition $x\ge L_{\mu_0}(y)$ implies the condition (\ref{Qmuco}) and therefore
$Q_{\mu_0}(x,y)>0$. But because $L_{\mu}(y)$ is decreasing as a function of $\mu$, then it is
also true that $x>L_{\mu}(y)$ holds for any $\mu\ge \mu_0$ and therefore
$Q_{\mu}(x,y)>0$ for all $\mu\ge \mu_0$.
\end{proof}
\begin{corollary}
\label{xyuno}
If $\mu\ge -2$ and $xy\ge 1$ then $Q_{\mu}(x,y)>0$.
\end{corollary}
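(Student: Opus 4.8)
The plan is to reduce Corollary \ref{xyuno} to facts about $Q_{\mu}(x,y)$ already recorded above, by splitting $\mu\ge -2$ into three ranges. For $\mu\ge 0$ there is nothing to prove: the kernel $I_{\mu-1}$ in (\ref{eq:defQmu}) then has order $\mu-1\ge -1$, hence is positive on $(0,\infty)$, so $Q_{\mu}(x,y)>0$ automatically, with no condition on $xy$ needed. For $\mu\in[-2,-1]$ the positivity is again unconditional: this is the case $k=1$ of the observation above that $Q_{\mu}(x,y)>0$ when $\mu\in[-2k,-2k+1]$ (for such $\mu$ one has $\nu:=1-\mu\in[2,3]$, $\sin\nu\pi\ge 0$, and $I_{\mu-1}=I_{-\nu}=I_{\nu}+\frac{2}{\pi}\sin\nu\pi\,K_{\nu}>0$ on $(0,\infty)$, so the integrand is positive).

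The only genuine case is therefore $\mu\in(-1,0)$, and here I would apply Corollary \ref{primomo} with $\mu_0=\mu$: it suffices to verify that the hypothesis $xy\ge 1$ implies $x\ge L_{\mu}(y)$. First, $xy\ge 1$ forces $x,y>0$, so the claim is equivalent to $yL_{\mu}(y)\le 1$, that is, to $\sqrt{(y-\mu-2)^2+4y}\le y+\mu+2$. Since $\mu>-1$ the right-hand side is positive, so squaring is an equivalence, and the difference between the squares of the two sides is $4y(\mu+1)$, which is $\ge 0$ exactly because $\mu\ge -1$. Hence $x\ge 1/y\ge L_{\mu}(y)$, and Corollary \ref{primomo} gives $Q_{\mu}(x,y)>0$.

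I do not expect a real obstacle; the only points needing a little care are the boundary values. The computation above makes the threshold $xy=1$ transparent: since $(y-1)^2+4y=(y+1)^2$ one has $L_{-1}(y)=1/y$, so Corollary \ref{xyuno} is precisely the limiting ``$\mu_0\to -1$'' statement of Corollary \ref{primomo}. One should therefore check that the endpoint $\mu=-1$, and more generally $\mu\in[-2,-1)$, are not lost merely because Corollary \ref{primomo} requires $\mu_0\in(-1,0)$ — they are not, since those orders are covered directly by the Bessel-kernel positivity recalled in the first paragraph, and with no restriction on $x,y$ at all.
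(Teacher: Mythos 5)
Your proof is correct and follows essentially the same route as the paper: unconditional positivity of $Q_{\mu}(x,y)$ for $\mu\in[-2,-1]$ from the $I_{-\nu}$ decomposition, and Corollary \ref{primomo} for $\mu>-1$, where the hypothesis $xy\ge 1$ is exactly $x\ge L_{-1}(y)=1/y\ge L_{\mu}(y)$. Your only deviation is cosmetic: you verify $yL_{\mu}(y)\le 1$ by squaring directly instead of invoking the monotonicity of $L_{\mu}(y)$ in $\mu$ together with the value at $\mu=-1$, which is the same estimate.
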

\begin{proof}
For $\mu\in [-2,-1]$ we have $Q_{\mu}(x,y)>0$ for any positive $x$ and $y$, and if $x\ge F_{-1}(y)=1/y$
then we have $Q_\mu (x,y)>0$ for all $\mu>-1$ (Corollary  \ref{primomo}).
\end{proof}

\subsection{Convexity properties}
\label{convex}

Next we study the convexity properties of Marcum functions and, more specifically, we bound the values of $x$ and $y$ for 
the inflection points. This is important information for the construction of algorithms for the inversion
of Marcum functions \cite{Gil:2014:THA}.

Using (\ref{de1}), (\ref{de2}) and (\ref{RecQ}), we obtain
\begin{equation}
\begin{array}{l}
\Frac{\partial^2 Q_{\mu}(x,y)}{\partial x^2}=(c_{\mu+1}(x,y)-1)F_{\mu}(x,y),\\
\\
\Frac{\partial^2 Q_{\mu}(x,y)}{\partial y^2}=(c_{\mu-1}(x,y)-1)F_{\mu-2}(x,y),
\end{array}
\end{equation}
with $c_\mu(x,y)$ as defined in (\ref{TTRR}) and $F_{\mu}$ as in (\ref{Fmu}).
Because $I_{\mu}(t)>0$ for $t>0$ and $\mu\ge -1$ we have that $F_{\mu}(x,y)>0$ if $x\ge 0$, $y>0$, and $\mu\ge -1$ (the
case $x\rightarrow 0^{+}$ is easy to check using the limiting form \cite[10.30.1]{Olver:2010:BF}); then,
the sign of the derivative $\partial^2 Q_{\mu}(x,y)/\partial x^2$ is the same as the sign of $c_{\mu+1}(x,y)-1$ if
$\mu\ge -1$ (and $x\ge 0$, $y>0$) while the sign of $\partial^2 Q_{\mu}(x,y)/\partial x^2$ 
coincides with the sign of $c_{\mu-1}(x,y)-1$ if $\mu\ge 1$.

For analyzing the convexity properties, and also for analyzing the bounds we will later obtain, Lemma \ref{coefi}
will be useful. For proving that result, we will need to consider the following bounds:

\begin{lemma}
\label{lemo}
\begin{equation}
\label{eco}
\Frac{t}{f_{\nu}(t)}<\Frac{I_{\nu}(t)}{I_{\nu -1}(t)}<\Frac{t}{f_{\nu-1}(t)},\nu\ge 0,
\end{equation}
where
\begin{equation}
f_{\lambda}(t)=(\lambda+\sqrt{\lambda^2+x^2})^{-1}.
\end{equation}
For $\nu\ge 1/2$ a sharper upper bound is
\begin{equation}
\label{mejora}
\Frac{I_{\nu}(t)}{I_{\nu -1}(t)}<\Frac{t}{f_{\nu-1/2}(t)}.
\end{equation}
\end{lemma}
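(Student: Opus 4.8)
The plan is to obtain both sides of (\ref{eco}) from the standard continued-fraction / Riccati structure of the ratio $h_\nu(t)=I_\nu(t)/I_{\nu-1}(t)$. Starting from the three-term recurrence $I_{\nu-1}(t)-I_{\nu+1}(t)=\frac{2\nu}{t}I_\nu(t)$ divided by $I_{\nu-1}(t)$, one gets the nonlinear relation
\begin{equation}
\label{eq:riccati}
h_\nu(t)=\Frac{1}{\Frac{2\nu}{t}+h_{\nu+1}(t)},
\end{equation}
so that $h_\nu$ satisfies $h_\nu\left(\frac{2\nu}{t}+h_{\nu+1}\right)=1$. Since $I_\nu(t)>0$ and $I_\nu(t)<I_{\nu-1}(t)$ for $t>0$, $\nu\ge 0$, we have $0<h_\nu(t)<1$ and also $0<h_{\nu+1}(t)<1$. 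Feeding the trivial bounds $0<h_{\nu+1}(t)$ and (inductively, or via $h_{\nu+1}<1$ and monotonicity in the order) a bound of the form $h_{\nu+1}(t)< t f_{\nu+1/2}(t)/1$ back into (\ref{eq:riccati}) yields the two-sided estimate: the lower bound uses $h_{\nu+1}(t)<t/(2(\nu+1/2))\le\cdots$ appropriately, and the upper bound uses $h_{\nu+1}(t)>0$. Concretely, $h_{\nu+1}(t)>0$ gives $h_\nu(t)<t/(2\nu)$, which is not quite $t/f_{\nu-1}(t)$; the sharper route is to iterate once more, i.e. substitute the crude bound for $h_{\nu+1}$ into (\ref{eq:riccati}) so that $h_\nu(t)=\left(\frac{2\nu}{t}+h_{\nu+1}(t)\right)^{-1}$ with $h_{\nu+1}(t)\in\left(\frac{t}{2\nu+1+\sqrt{(2\nu+1)^2+t^2}}\big/\text{(something)},\,\cdot\right)$ — I would instead look for a self-consistent ansatz.

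The cleanest approach, and the one I would actually carry out, is the fixed-point/monotone-iteration argument. Define $g_\lambda(t)=t f_\lambda(t)=t/(\lambda+\sqrt{\lambda^2+t^2})$, equivalently the positive root of $g^2+2\lambda g - t^2=0$... wait, $g_\lambda$ solves $\lambda+\sqrt{\lambda^2+t^2}=t/g$, i.e. $\sqrt{\lambda^2+t^2}=t/g-\lambda$, i.e. $t^2 = t^2/g^2 - 2\lambda t/g$, i.e. $g + 2\lambda g/t \cdot (1) $... more simply $g_\lambda$ satisfies $g_\lambda\left(\frac{2\lambda}{t}+g_\lambda\right)=1$ — exactly the Riccati relation (\ref{eq:riccati}) but with the \emph{same} index $\lambda$ on both sides rather than $\nu$ and $\nu+1$. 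So the key comparison lemma is: if $\frac{2\nu}{t}+h_{\nu+1}(t)>\frac{2(\nu-1/2)}{t}+g_{\nu-1/2}(t)$ then $h_\nu(t)<g_{\nu-1/2}(t)=t f_{\nu-1/2}(t)$, i.e. it suffices to show $h_{\nu+1}(t)>g_{\nu-1/2}(t)-\frac{1}{t}$. For the lower bound $h_\nu(t)>g_\nu(t)=t f_\nu(t)$ one needs $\frac{2\nu}{t}+h_{\nu+1}(t)<\frac{2\nu}{t}+g_\nu(t)$, i.e. $h_{\nu+1}(t)<g_\nu(t)$; combined with the upper bound just established (shifted by one), $h_{\nu+1}(t)<g_{\nu+1/2}(t)<g_\nu(t)$ since $g_\lambda$ is decreasing in $\lambda$. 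So the whole thing bootstraps once the base estimates at large index (or the monotonicity of $h_\nu$ in $\nu$, which follows from the Turán-type inequality $I_\nu^2>I_{\nu-1}I_{\nu+1}$, equivalently $h_\nu$ decreasing in $\nu$) are in place.

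Thus the concrete steps are: (i) record (\ref{eq:riccati}) and $0<h_\nu<1$; (ii) prove $h_\nu(t)$ is decreasing in $\nu$ from the Turán inequality for $I_\nu$ (well known, e.g. from the series or from \cite[10.25.2]{Olver:2010:BF}); (iii) observe $g_\lambda(t)=tf_\lambda(t)$ solves $g(\tfrac{2\lambda}{t}+g)=1$ and is decreasing in $\lambda$; (iv) set $u_\nu(t)=h_\nu(t)/g_{\nu-1/2}(t)$ or directly track the defect in (\ref{eq:riccati}), use step (ii) to close the inequality $h_{\nu+1}<g_{\nu-1/2}-\tfrac1t$ needed for the upper bound and $h_{\nu+1}<g_\nu$ for the lower bound, and conclude (\ref{eco}) and (\ref{mejora}) (the latter requiring $\nu\ge1/2$ precisely so that $g_{\nu-1/2}$ has nonnegative index and the shift argument stays in range). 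The main obstacle I anticipate is step (iv): making the bootstrap rigorous without circularity — one must either peel off one recurrence step and use $h_{\nu+1}<1$ as the crude seed, then improve, or run an induction on $n$ for $h_{\nu+n}$ and pass to the limit using $h_\nu(t)\to 0$-type asymptotics (\cite[10.41.1]{Olver:2010:BF}) as $\nu\to\infty$, which is exactly the minimal/dominant solution dichotomy already invoked after (\ref{TTRR}). Everything else is routine algebra with the quadratic defining $g_\lambda$.
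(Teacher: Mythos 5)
You have the right machinery on the table (the recurrence $1/h_\nu=\frac{2\nu}{t}+h_{\nu+1}$ for $h_\nu=I_\nu/I_{\nu-1}$, and the observation that $g_\lambda(t)=t/(\lambda+\sqrt{\lambda^2+t^2})$ is the positive fixed point of the same relation with equal indices), but the crucial closing step is missing, and the only concrete closure your plan makes available is false. Writing the sharper bound (\ref{mejora}) as $h_\nu<g_{\nu-1/2}$, your own reduction shows it is equivalent to $h_{\nu+1}>g_{\nu-1/2}-\frac{1}{t}$, i.e. you need a \emph{lower} bound on $h_{\nu+1}$. The only lower bound your scheme can offer is $g_{\nu+1}$ (the inequality being proven, at shifted index), and $g_{\nu+1}\ge g_{\nu-1/2}-\frac{1}{t}$ is simply not true: for $\nu=1/2$, $t=10$ one has $g_{3/2}(10)\approx 0.861$ while $g_0(10)-1/10=0.9$. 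Indeed, for $\nu=1/2$ the required inequality reads $\coth t-\frac1t>1-\frac1t$, whose slack is $O(e^{-2t})$, so no bound from the algebraic family $g_\lambda$ (which differs from $h$ by powers of $1/t$) can ever supply it; a one-step recurrence bootstrap cannot produce (\ref{mejora}). Your two suggested repairs do not help: monotonicity of $h_\nu$ in $\nu$ gives $h_{\nu+1}<h_\nu$, an upper bound on $h_{\nu+1}$, which is the wrong direction (note also the sign slip in your step (iv), where the needed inequality appears reversed); and the proposal to run an induction on $h_{\nu+n}$ and pass to the limit is exactly the hard part and is left entirely unspecified.

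The same gap propagates to the other two inequalities as you have arranged them: your lower bound $h_\nu>g_\nu$ is deduced from the (unproven) sharp upper bound at index $\nu+1$, and replacing it by the weak upper bound at $\nu+1$ (which is precisely $h_{\nu+1}<g_\nu$) only produces an infinite regress lower $\Leftarrow$ upper $\Leftarrow$ lower $\Leftarrow\cdots$ with no established base case; converting the trivial seed $0<h_{\nu+n}<1$ into the specific algebraic bounds in the limit is a genuine argument you have not given. For comparison, the paper does not attempt any of this: it quotes the lower bound and the sharper bound (\ref{mejora}) from \cite{Segura:2011:BRM}, where they are proved by a qualitative analysis of the Riccati equation in $t$ (the same device used later in the proof of Lemma \ref{coefi}), and the only thing it derives here is the weaker upper bound $t/f_{\nu-1}$, obtained exactly as in the one piece of your plan that does work: insert the lower bound at index $\nu+1$ into one recurrence step to get $h_\nu<t/\bigl(\nu-1+\sqrt{(\nu+1)^2+t^2}\bigr)$ and then use $\sqrt{(\nu+1)^2+t^2}\ge\sqrt{(\nu-1)^2+t^2}$ for $\nu\ge0$. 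To complete your proof you would have to supply an independent proof of the lower bound and of (\ref{mejora}) — e.g. the Riccati ODE comparison in $t$, or a continued-fraction argument with a genuine tail estimate — rather than the circular recurrence shuffle.
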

\begin{proof}
The lower bound and the second upper bound are known results (see for instance \cite[Corollary 3]{Segura:2011:BRM} and \cite[Theorem 3]{Segura:2011:BRM} respectively). 

The first upper bound is a consequence of \cite[Theorem 4]{Segura:2011:BRM}.
Starting with the lower bound in (\ref{eco}) $b_{\nu}(x)=t/f_{\nu}(t)$ an upper bound is given by
\begin{equation}
\Frac{I_{\nu}(t)}{I_{\nu -1}(t)}<\Frac{t}{2\nu+t b_{\nu+1}(t)}=\Frac{t}{\nu-1+\sqrt{(\nu+1)^2+t^2}},\,\nu \ge 0.
\end{equation}
Obviously $\sqrt{(\nu+1)^2+t^2}\ge \sqrt{(\nu-1)^2+t^2}$ if $\nu\ge 0$, which proves the result.
\end{proof}

\begin{lemma}
\label{coefi}
The function $c_\mu (x,y)=\displaystyle\sqrt{\frac{y}{x}}\Frac{I_\mu \left(2\sqrt{xy}\right)}{I_{\mu-1}\left(2\sqrt{xy}\right)}$, $x,y>0$, 
$\mu\ge 0$,
is increasing as a function of $y$, decreasing as a function of $x$ and decreasing as a function of $\mu$. 
Specific values are:
\begin{equation}
\label{valsces}
\begin{array}{l}
c_{\mu}(0^+,y)=y/\mu,\,c_{\mu}(+\infty,y)=0,\\
c_{\mu}(x,0^+)=0,\,c_{\mu}(x,+\infty)=+\infty,\\ 
c_{+\infty}(x,y)=0.
\end{array}
\end{equation}
In addition, we have that 
\begin{equation}
\label{desc}
\begin{array}{l}
c_{\mu}(x,y)>1 \mbox { if } y>x+\mu,\,\mu\ge 0,\\
c_{\mu}(x,y)<1 \mbox { if } y<x+\mu-1/2,\,\mu\ge 1/2,\\
c_{\mu}(x,y)<1 \mbox { if } y<x+\mu-1,\,\mu\ge 0.
\end{array}
\end{equation}
\end{lemma}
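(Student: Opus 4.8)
\emph{Plan.} Everything reduces to properties of the one‑variable ratio $h_\mu(t)=I_\mu(t)/I_{\mu-1}(t)$ evaluated at $t=2\sqrt{xy}$, because
\[
c_\mu(x,y)=\sqrt{y/x}\,h_\mu(t)=\frac{t}{2x}\,h_\mu(t).
\]
The standard recurrences $I_\mu'(t)=I_{\mu-1}(t)-\frac{\mu}{t}I_\mu(t)$ and $I_{\mu-1}'(t)=I_\mu(t)+\frac{\mu-1}{t}I_{\mu-1}(t)$ give the Riccati equation $h_\mu'(t)=1-h_\mu(t)^2-\frac{2\mu-1}{t}h_\mu(t)$, and Lemma~\ref{lemo} supplies the a priori estimates $\frac{t}{\mu+\sqrt{\mu^2+t^2}}<h_\mu(t)<\frac{t}{(\mu-1)+\sqrt{(\mu-1)^2+t^2}}$, together with the sharper upper bound $h_\mu(t)<\frac{t}{(\mu-\frac12)+\sqrt{(\mu-\frac12)^2+t^2}}$ when $\mu\ge 1/2$. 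I will also use $c_{\mu+1}(x,y)=\frac1x\bigl(\frac{y}{c_\mu(x,y)}-\mu\bigr)$, which is the recurrence $I_{\mu-1}/I_\mu=\frac{2\mu}{t}+I_{\mu+1}/I_\mu$ rewritten.

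\emph{Monotonicity in $y$ and $x$.} Differentiating the identity above, $\partial c_\mu/\partial y=h_\mu(t)/t+h_\mu'(t)$, which by the Riccati equation equals $1-h_\mu(t)^2-\frac{2(\mu-1)}{t}h_\mu(t)$. The polynomial $1-z^2-\frac{2(\mu-1)}{t}z$ is positive precisely for $0\le z<\frac{t}{(\mu-1)+\sqrt{(\mu-1)^2+t^2}}$, so positivity of $\partial c_\mu/\partial y$ is exactly the upper bound of Lemma~\ref{lemo}. A short computation likewise shows $\partial c_\mu/\partial x$ has the sign of $t\,h_\mu'(t)-h_\mu(t)=t-t\,h_\mu(t)^2-2\mu h_\mu(t)$, and $t-tz^2-2\mu z<0$ precisely for $z>\frac{t}{\mu+\sqrt{\mu^2+t^2}}$, so $\partial c_\mu/\partial x<0$ is exactly the lower bound of Lemma~\ref{lemo}. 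Both arguments include the endpoint $\mu=0$, since Lemma~\ref{lemo} is valid at $\nu=0$.

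\emph{Monotonicity in $\mu$.} This is the assertion that $h_\mu(t)$ decreases in $\mu$ for fixed $t$, i.e.\ that $\mu\mapsto\log I_\mu(t)$ is concave, equivalently the Tur\'an‑type inequality $I_{\mu-1}(t)^2>I_\mu(t)I_{\mu-2}(t)$, a known property of modified Bessel functions. For unit steps it is also immediate from $c_{\mu+1}=\frac1x(\frac{y}{c_\mu}-\mu)$ and the lower bound of Lemma~\ref{lemo}, since $c_{\mu+1}<c_\mu$ reduces exactly to $h_\mu(t)>\frac{t}{\mu+\sqrt{\mu^2+t^2}}$; a self‑contained argument for all real $\mu$ is to differentiate the Riccati equation in $\mu$, obtaining the linear equation $\partial_t p+\bigl(2h_\mu+\frac{2\mu-1}{t}\bigr)p=-\frac{2h_\mu}{t}<0$ for $p=\partial h_\mu/\partial\mu$, and to verify, using $h_\mu(t)\sim t/(2\mu)$, $p(t)=O(t)$ and integrating factor $\sim t^{2\mu-1}$ as $t\to0^+$, that $p<0$; the value $\mu=0$ then follows by continuity. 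This is the one nontrivial step that does not reduce directly to Lemma~\ref{lemo}, and hence the main obstacle.

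\emph{Boundary values and the inequalities (\ref{desc}).} The entries of (\ref{valsces}) follow from $h_\mu(t)\sim t/(2\mu)$ as $t\to0^+$ (equivalently $c_\mu(0^+,y)=F_\mu(0,y)/F_{\mu-1}(0,y)=y/\mu$), from $h_\mu(t)\to1$ as $t\to+\infty$, from $h_\mu(t)\to0$ as $\mu\to+\infty$, and from the factor $\sqrt{y/x}$. For (\ref{desc}), note that $c_\mu(x,y)>1$ iff $h_\mu(t)>\sqrt{x/y}=\frac{2x}{t}$; substituting the lower bound $h_\mu(t)>\frac{t}{\mu+\sqrt{\mu^2+t^2}}$ and using $t^2=4xy$, the inequality $\frac{t}{\mu+\sqrt{\mu^2+t^2}}\ge\frac{2x}{t}$ is equivalent to $2y-\mu\ge\sqrt{\mu^2+4xy}$, hence after squaring to $y\ge x+\mu$, which gives the first line of (\ref{desc}). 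Symmetrically $c_\mu(x,y)<1$ iff $h_\mu(t)<\frac{2x}{t}$, and substituting the upper bound $h_\mu(t)<\frac{t}{(\mu-1)+\sqrt{(\mu-1)^2+t^2}}$ yields $c_\mu<1$ whenever $y<x+\mu-1$, while substituting the sharper bound with $\mu-\frac12$ in place of $\mu-1$ yields $c_\mu<1$ whenever $y<x+\mu-\frac12$ (valid for $\mu\ge1/2$).
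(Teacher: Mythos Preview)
Your proof is correct and follows essentially the same route as the paper: reduce everything to the one–variable ratio $g_\mu(t)=I_\mu(t)/I_{\mu-1}(t)$, use its Riccati equation, and appeal to the bounds of Lemma~\ref{lemo} to obtain the monotonicity in $x$ and $y$ and the inequalities~(\ref{desc}). The only stylistic difference is in the $\mu$–monotonicity step, where the paper uses a crossing/contradiction argument (if $g_\mu$ and $g_{\mu'}$ meet, the Riccati equation forces the wrong sign on the derivatives), while you integrate the linearized equation for $\partial_\mu h_\mu$; these are two faces of the same idea, already hinted at in the paper's footnote.
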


\begin{proof}

The particular values (\ref{valsces}) are straightforward to verify by using well known limiting forms \cite[10.30]{Olver:2010:BF}.

Fro proving monotonicity with respect to $x$ and $y$ it will be useful to consider the Riccati equation
satisfied by $g_{\mu}(t)=I_{\mu}(t)/I_{\mu-1}(t)$. It is easy to check that (see \cite[Eq. (11)]{Segura:2011:BRM}):
\begin{equation}
\label{ricg}
g_{\mu}^{\prime}(t)=1-\Frac{2\mu -1}{t}g_{\mu}(t)-g_{\mu}(t)^2.
\end{equation}

The monotonicity with respect to $x>0$ of $c_{\mu}(x,y)$ is equivalent to the
monotonicity of $h_{\mu}(t)=g_{\mu}(t)/t$ as a function of $t>0$ for $\mu\ge 0$. From (\ref{ricg}) we have
 $th_{\mu}^{\prime}(t) =1-2\mu g_{\mu}(t)/t-g_{\mu}(t)^2$;
and because $g_{\mu}(t)>t/f_{\mu}(t)$, $t> 0$, $\mu\ge 0$ (Lemma \ref{lemo}) (see also
\cite{Laforgia:2010:SIF}) the Riccati equation for $h_{\mu}(t)$ implies that $h_{\mu}^{\prime}(t)<0$, $t>0$. 
This proves that $c_{\mu}(x,y)$ is decreasing as a function of $x> 0$.

The fact that $c_{\mu}(x,y)$ is increasing as a function of $y$ is equivalent to the fact that
$p_{\mu}(t)=tg_{\mu}(t)$ is increasing as a function of $t> 0$ for $\mu\ge 0$. From (\ref{ricg}) we
have $p_{\mu}'(t)=t\left[1-2(\mu-1)g_{\mu}(t)/t-g_{\mu}(t)^2\right]$;
and because $0<g_{\mu}(t)<t/f_{\mu-1}(t)$, $t> 0$, $\mu\ge 0$ (Lemma \ref{lemo}), using this Riccati equation
we see that $p_{\mu}'(t)>0$, $\mu\ge 0$, which implies that $c_{\mu}(x,y)$ is increasing as a function of $y>0$.

The monotonicity with respect to $\mu$ follows from the monotonicity of
 $g_{\mu}(t)=I_{\mu}(t)/I_{\mu-1}(t)$ with respect to $\mu$. To prove that this ratio is decreasing
 we take into account that $g_{\mu}(0^+)>g_{\mu'}(0^+)$
if $\mu<\mu'$ (see (\ref{valsces})) and we prove that this implies, 
on account of (\ref{ricg}), that $g_{\mu}(t)>g_{\mu'}(t)$ for all $t>0$.
We suppose the contrary and we arrive at a contradiction: let $t_c$ be the smallest positive value such that
$g(t_c)=g_{\mu}(t_c)=g_{\mu'}(t_c)$; because $g_{\mu}(t)>g_{\mu'}(t)$ if $t<t_c$ then $g'_{\mu}(t_c)<g'_{\mu'}(t_c)$;
but considering  (\ref{ricg}) we have
$$
g_{\mu'}'(t_c)=1-\Frac{2\mu' -1}{t_c}g(t_c)-g(t_c)^2<1-\Frac{2\mu -1}{t_c}g(t_c)-g(t_c)^2 =g_{\mu}'(t_c),
$$
and there is a contradiction.
\footnote{In other words, the proof of monotonicity with respect to 
$\mu$ follows from the fact that $g_{\mu}(0^+)<g_{\mu'}(0^+)$
if $\mu>\mu'$ and that, for a fixed value of $g_{\nu}(t)$, 
$$
\Frac{\partial}{\partial \nu}(g_{\nu}'(t))=-\Frac{2}{t}g_{\nu}(t),
$$
where we have derived the Riccati equation but taking $g_{\nu}(t)$ fixed (not depending on $\nu$).}

For proving the inequalities, we write the equation $c_{\mu}(x,y)=1$ as
\begin{equation}
D_{\mu}(x,y)=0,\,D_{\mu}(x,y)=\Frac{1}{z}\Frac{I_{\mu}(z)}{I_{\mu -1}(z)}-\Frac{1}{2y}=0 .
\end{equation}
Now, we consider the bounds (\ref{eco}); we can write for $\mu\ge0$:
\begin{equation}
\label{empi}
f_{\mu}(z)-\Frac{1}{2y}<D_{\mu}(x,y)<f_{\mu-1}(z)-\Frac{1}{2y},\, f_{\nu}(z)=(\nu+\sqrt{\nu^2+z^2})^{-1} .
\end{equation}
Because $f_{\nu}(z)$ is decreasing as a function of $\nu$ and continuous, there exist only one value 
$\mu^*$ such that  $\Frac{1}{2y}=f_{\mu^*}(z)$; this value is $\mu^* = y-x$. Therefore $f_{\mu}(z)-\Frac{1}{2y}>0$
if $\mu<\mu^*$ and  $f_{\mu-1}(z)-\Frac{1}{2y}<0$ if $\mu-1>\mu^*$, which implies that  
$c_{\mu}(x,y)>1$ ($D_{\mu}(x,y)>0$) if $y>x+\mu$ and
$c_{\mu}(x,y)<1$ if $y<x+\mu-1$, $\mu\ge 0$. If in (\ref{empi}) we use the upper bound (\ref{mejora}) instead, we get
that $c_{\mu}(x,y)<1$ if $y<x+\mu-1/2$ when $\mu\ge 1/2$.

\end{proof}

Lemma \ref{coefi} is key to prove the convexity properties and it will be also important in the derivation of bounds.
The monotonicity of $c_{\mu}(x,y)$ with respect to $x$ and $y$ is mentioned without proof in \cite{Saxena:1982:EOT},
where the first two inequalities of (\ref{desc}) are also proved in a different way.

Now we prove the following result

\begin{theorem}
The following convexity properties with respect to $x$ hold for $\mu\ge 0$:
\begin{equation}
\label{unol}
\Frac{\partial^2 Q_{\mu}(x,y)}{\partial x^2}\le 0 \quad \mbox{ if } \quad  0<y \le \mu +1,\,x\ge 0,
\end{equation}
where the equality only takes place for $x=0$, $y=\mu+1$.

\begin{equation}
\begin{array}{l}
\label{dosol}
\Frac{\partial^2 Q_{\mu}(x,y)}{\partial x^2}<0 \quad \mbox{ if } \quad x>y-\mu-\frac12,\\
\\
\Frac{\partial^2 Q_{\mu}(x,y)}{\partial x^2}>0 \quad \mbox{ if } \quad x<y-\mu-1.
\end{array}
\end{equation}

\end{theorem}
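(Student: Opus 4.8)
The plan is to reduce both displayed assertions to the sign of $c_{\mu+1}(x,y)-1$ and then read off the conclusions from Lemma~\ref{coefi}. We have already recorded that
$$
\Frac{\partial^2 Q_{\mu}(x,y)}{\partial x^2}=(c_{\mu+1}(x,y)-1)F_{\mu}(x,y),
$$
together with the fact that $F_{\mu}(x,y)>0$ whenever $\mu\ge -1$, $x\ge 0$ and $y>0$ (the case $x\to 0^{+}$ being covered by \cite[10.30.1]{Olver:2010:BF}). Since $\mu\ge 0$ throughout, $F_{\mu}(x,y)>0$ on the whole region considered, so the sign of $\partial^2 Q_{\mu}(x,y)/\partial x^2$ agrees with the sign of $c_{\mu+1}(x,y)-1$, and the problem reduces to locating where $c_{\mu+1}(x,y)$ crosses the value $1$.

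For (\ref{dosol}) I would simply apply the inequalities (\ref{desc}) with $\mu+1$ in the role of $\mu$. Since $\mu\ge 0$ gives $\mu+1\ge 1$, the hypotheses $\mu+1\ge 1/2$ and $\mu+1\ge 0$ are satisfied. The second line of (\ref{desc}) then yields $c_{\mu+1}(x,y)<1$ as soon as $y<x+(\mu+1)-\frac12$, i.e. $x>y-\mu-\frac12$, whence $\partial^2 Q_{\mu}/\partial x^2<0$; the first line yields $c_{\mu+1}(x,y)>1$ as soon as $y>x+(\mu+1)$, i.e. $x<y-\mu-1$, whence $\partial^2 Q_{\mu}/\partial x^2>0$.

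For (\ref{unol}) the claim amounts to $c_{\mu+1}(x,y)\le 1$ on the set $\{\,0<y\le\mu+1,\ x\ge 0\,\}$. By Lemma~\ref{coefi}, $c_{\mu+1}(x,y)$ is non-increasing in $x$ and non-decreasing in $y$, so on this set its supremum is approached in the corner $x\to 0^{+}$, $y=\mu+1$; using the particular value $c_{\mu+1}(0^{+},y)=y/(\mu+1)$, that supremum equals $1$. Hence $c_{\mu+1}(x,y)\le 1$ throughout the set and $\partial^2 Q_{\mu}/\partial x^2\le 0$ there. For the equality statement, I would argue that if $x>0$ then $c_{\mu+1}(x,y)$ is \emph{strictly} decreasing in $x$ (this is precisely the inequality $h_{\mu}'(t)<0$, $t>0$, established inside the proof of Lemma~\ref{coefi}), so that $c_{\mu+1}(x,y)<c_{\mu+1}(0^{+},y)=y/(\mu+1)\le 1$ and the second derivative is strictly negative; hence equality forces $x=0$, and then $c_{\mu+1}(0^{+},y)=y/(\mu+1)=1$ forces $y=\mu+1$. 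Since $F_{\mu}(0,\mu+1)>0$, at that single point the second derivative indeed vanishes.

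Once Lemma~\ref{coefi} is in hand the argument is largely bookkeeping; the only mildly delicate step is the boundary analysis for the equality case, which needs both the strict monotonicity of $c_{\mu+1}$ in $x$ for $x>0$ and the exact limiting value $c_{\mu+1}(0^{+},y)=y/(\mu+1)$ to single out $(x,y)=(0,\mu+1)$ as the unique point where (\ref{unol}) is an equality.
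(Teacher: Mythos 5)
Your proposal is correct and follows essentially the same route as the paper: both reduce the sign of $\partial^2 Q_{\mu}/\partial x^2$ to the sign of $c_{\mu+1}(x,y)-1$ via the second-derivative formula and the positivity of $F_{\mu}$, then use the value $c_{\mu+1}(0^+,y)=y/(\mu+1)$ together with the monotonicity in $x$ from Lemma~\ref{coefi} for (\ref{unol}), and the inequalities (\ref{desc}) shifted to $\mu+1$ for (\ref{dosol}). The only difference is that you spell out the equality case $(x,y)=(0,\mu+1)$ via strict monotonicity in $x$, a detail the paper leaves implicit.
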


\begin{proof}
Because we have $c_{\mu +1}(0,y)=y/(\mu +1)$ and $c_{\mu +1}(x,y)$ decreases as a function of $x$,
 if $y<\mu +1$ we have $c_{\mu +1}(x,y)<1$ for all $x\ge 0$. This implies Eq. (\ref{unol}).

For $y>\mu+1$ there is necessarily an inflection point with respect to $x$ and only one
 (because $c_{\mu+1}$ is monotonic) and Lemma
\ref{coefi} gives information on the location of the inflexion point that leads to Eq. (\ref{dosol}).
\end{proof}

With respect to $y$ we have the following result:
\begin{theorem}
The following convexity properties with respect to $y$ hold for $\mu\ge 1$:
\begin{equation}
\label{conve_y}
\begin{array}{l}
\Frac{\partial^2 Q_{\mu}(x,y)}{\partial y^2}>0 \quad \mbox{ if } \quad y>x+\mu-1,\\
\\
\Frac{\partial^2 Q_{\mu}(x,y)}{\partial y^2}<0 \quad \mbox{ if } \quad y<x+\mu-2.
\end{array}
\end{equation}
If $\mu\ge 3/2$ the range of the last inequality can be improved
\begin{equation}
\Frac{\partial^2 Q_{\mu}(x,y)}{\partial y^2}<0 \quad \mbox{ if } \quad y<x+\mu-3/2.
\end{equation}
\end{theorem}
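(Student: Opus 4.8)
The plan is to reduce the claim to the sign of $c_{\mu-1}(x,y)-1$ and then quote Lemma~\ref{coefi}. Recall the second--derivative formula obtained at the start of this subsection,
\[
\Frac{\partial^2 Q_{\mu}(x,y)}{\partial y^2}=(c_{\mu-1}(x,y)-1)\,F_{\mu-2}(x,y),
\]
together with the fact noted there that $F_{\nu}(x,y)>0$ for $x\ge 0$, $y>0$ and $\nu\ge -1$. Taking $\nu=\mu-2$ forces $\mu\ge 1$, which is exactly the hypothesis of the theorem; hence $F_{\mu-2}(x,y)>0$ and the sign of $\partial^2 Q_{\mu}(x,y)/\partial y^2$ coincides with the sign of $c_{\mu-1}(x,y)-1$.

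The rest is just shifting the parameter in the inequalities (\ref{desc}) by one unit. First I would apply the first line of (\ref{desc}), namely $c_{\nu}(x,y)>1$ if $y>x+\nu$ (valid for $\nu\ge 0$), with $\nu=\mu-1\ge 0$: this gives $c_{\mu-1}(x,y)>1$ whenever $y>x+\mu-1$, hence the first convexity statement. Then I would apply the third line of (\ref{desc}), $c_{\nu}(x,y)<1$ if $y<x+\nu-1$ (valid for $\nu\ge 0$), again with $\nu=\mu-1$: this gives $c_{\mu-1}(x,y)<1$ whenever $y<x+\mu-2$, hence the second statement. For the sharpened range I would use the second line of (\ref{desc}), $c_{\nu}(x,y)<1$ if $y<x+\nu-1/2$, which requires $\nu\ge 1/2$; with $\nu=\mu-1$ this needs $\mu\ge 3/2$ and yields $c_{\mu-1}(x,y)<1$ for $y<x+\mu-3/2$.

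I do not expect a genuine obstacle: once Lemma~\ref{coefi} is in hand the argument is a bookkeeping exercise, the only points deserving a line of care being (i) that the parameter restrictions in (\ref{desc}) ($\nu\ge 0$, resp.\ $\nu\ge 1/2$) translate into the stated restrictions on $\mu$ ($\mu\ge 1$, resp.\ $\mu\ge 3/2$), and (ii) that $F_{\mu-2}(x,y)$ remains positive at the boundary case $\mu=1$ and as $x\to 0^+$ --- which follows from $I_{-1}(t)=I_{1}(t)>0$ together with the limiting form \cite[10.30.1]{Olver:2010:BF}, exactly as in the analysis of the $x$-convexity. One may also observe that, in contrast with the $x$-convexity theorem, there is no analogue of (\ref{unol}) here, since $c_{\mu-1}(x,0^+)=0$ and $c_{\mu-1}(x,+\infty)=+\infty$ force an inflection point in $y$ for every fixed $x\ge 0$.
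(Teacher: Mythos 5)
Your proof is correct and follows essentially the same route as the paper: it reduces the sign of $\partial^2 Q_{\mu}/\partial y^2$ to the sign of $c_{\mu-1}(x,y)-1$ via the second-derivative formula and the positivity of $F_{\mu-2}$ for $\mu\ge 1$, then reads off the three ranges from (\ref{desc}) of Lemma \ref{coefi} with the index shifted by one. The parameter bookkeeping ($\nu=\mu-1\ge 0$, resp.\ $\nu\ge 1/2$ giving $\mu\ge 3/2$) matches the paper's implicit argument exactly.
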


Observe that for $\mu\ge 1$ there is always one and only one inflection point with respect to $y$
because $c_{\mu -1}(x,y)$
is monotonically increasing as a function of $y$ and $c_{\mu-1}(x,0)=0$, $c_{\mu-1}(x,+\infty)=+\infty$
if $\mu\ge 1$. Contrarily, for $\mu<1$ this analysis can not be made, because the  
monotonicity properties of $c_{\mu-1}(x,y)$ may not hold and, additionally, the sign of the second 
derivative is not determined by the sign of $c_{\mu-1}(x,y)-1$. There are cases
for which $\partial^2 Q_{\mu}(x,y)/\partial y^2$ changes sign twice when $\mu\in (0,1)$.

\section{Non-central distributions: monotonicity and bounds}

\label{noncentral}

Next we obtain monotonicity properties for the ratios $P_{\mu +1}/P_{\mu }$ and $Q_{\mu+1}/Q_{\mu}$ and 
bounds for these ratios which, taking into account (\ref{RecQ}), will give bounds for $P_{\mu}$ and
$Q_\mu$. 

\begin{remark}
In the following, except explicitly stated otherwise, 
we will assume that in all the expressions involving $P_{\mu}(x,y)$ or $Q_{\mu}(x,y)$
we have $x\ge 0$, $y>0$ and $\mu>0$; $\mu =0$ will be also allowed for $Q_{\mu}(x,y)$. 
\end{remark}

First, we observe that $P_{\mu +1}(x,y)/P_{\mu }(x,y)<Q_{\mu+1}(x,y)/Q_{\mu}(x,y)$ because, using 
(\ref{eq:PQcompl}), this is equivalent to $Q_{\mu}(x,y)-Q_{\mu+1}(x,y)<0$ which holds on account of
the recurrence (\ref{RecQ}).

We will prove that both ratios are increasing as a function of $y$ and decreasing as functions of $x$ and
$\mu$. Also, we derive upper and lower bounds for these ratios.

\subsection{The ratio $P_{\mu+1}/P_{\mu}$}

\begin{theorem}
\label{cotaPse}
The following bound holds
\begin{equation}
P_{\mu +1}(x,y)/P_{\mu }(x,y)<c_{\mu +1}(x,y).
\end{equation}
\end{theorem}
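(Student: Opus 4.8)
The plan is to exploit the series representation \eqref{seriey} together with the three‑term recurrence \eqref{RecQ} (equivalently the monotonicity of $c_\mu$ from Lemma \ref{coefi}). Write
$$
P_{\mu}(x,y)=\sum_{k=0}^{\infty}F_{\mu+k}(x,y),\qquad
P_{\mu+1}(x,y)=\sum_{k=0}^{\infty}F_{\mu+1+k}(x,y),
$$
with $F_\mu$ as in \eqref{Fmu}. The key algebraic fact is that consecutive $F$'s are linked by $F_{\mu+1}(x,y)=c_{\mu+1}(x,y)F_{\mu}(x,y)$, which is immediate from the definitions of $F_\mu$ and of $c_\mu$ in \eqref{TTRR} (both involve $\left(y/x\right)^{1/2}I_{\mu+1}/I_{\mu}$). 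Hence $P_{\mu+1}/P_{\mu}<c_{\mu+1}$ is equivalent to
$$
\sum_{k=0}^{\infty}F_{\mu+1+k}(x,y)\;<\;c_{\mu+1}(x,y)\sum_{k=0}^{\infty}F_{\mu+k}(x,y)
=\sum_{k=0}^{\infty}c_{\mu+1}(x,y)F_{\mu+k}(x,y),
$$
so it suffices to show $F_{\mu+1+k}(x,y)\le c_{\mu+1}(x,y)F_{\mu+k}(x,y)$ for every $k\ge 0$, with strict inequality for $k\ge 1$ (the $k=0$ term is an equality).

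For a single term this reduces, after cancelling $F_{\mu+k}$, to the inequality
$$
c_{\mu+1+k}(x,y)\;\le\;c_{\mu+1}(x,y),\qquad k\ge 0,
$$
because $F_{\mu+1+k}=c_{\mu+1+k}F_{\mu+k}$. This is exactly the statement, proved in Lemma \ref{coefi}, that $c_{\mu}(x,y)$ is \emph{decreasing} in $\mu$: for $k\ge 1$ we have $\mu+1+k>\mu+1$ so the inequality is strict, and for $k=0$ it is an equality. Summing over $k$ and noting that the series have positive terms (so the strict inequalities for $k\ge 1$ survive the summation), we get $P_{\mu+1}(x,y)<c_{\mu+1}(x,y)P_{\mu}(x,y)$, which is the claim.

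The only point requiring a little care is the legitimacy of the term‑by‑term comparison and of pulling the constant $c_{\mu+1}$ inside the sum: this is justified since all $F_{\mu+k}(x,y)>0$ for $\mu>0$, $x\ge 0$, $y>0$ (as observed before \eqref{seriey}, and in the subsection on convexity using \cite[10.30.1]{Olver:2010:BF} when $x\to0^+$), and the series for $P_\mu$ and $P_{\mu+1}$ converge. I expect the main (and essentially only) obstacle to be bookkeeping the strictness — one must make sure at least one term ($k=1$, say) gives a strict inequality, which it does because $c_{\mu+2}(x,y)<c_{\mu+1}(x,y)$ strictly by the strict monotonicity of $g_\mu=I_\mu/I_{\mu-1}$ in $\mu$ established in Lemma \ref{coefi}. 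Everything else is a direct consequence of results already in hand.
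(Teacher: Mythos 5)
Your proof is correct and follows essentially the same route as the paper: both use the series representation \eqref{seriey} (with $F_{\mu+1}=c_{\mu+1}F_{\mu}$) and the monotonicity of $c_{\mu}$ in $\mu$ from Lemma \ref{coefi}. The paper factors out $c_{\mu+1}$ and compares products of $c$'s in numerator and denominator, while you compare the series term by term via $F_{\mu+1+k}\le c_{\mu+1}F_{\mu+k}$ — an equivalent bookkeeping of the same argument.
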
 
\begin{proof}
Using the series (\ref{seriey}) we have
\begin{equation}
\label{cociente}
\Frac{P_{\mu +1}(x,y)}{P_{\mu }(x,y)}=c_{\mu +1}(x,y)
\Frac{1+\displaystyle\sum_{j=1}^{\infty} \left(\prod_{i=1}^{j} c_{\mu+i+1}\right)}
{1+\displaystyle\sum_{j=1}^{\infty} \left(\prod_{i=1}^{j} c_{\mu+i}\right)},
\end{equation}
but $c_{\nu}>c_{\nu+1}$ for $\nu\ge 0$ (Lemma \ref{coefi}) and then Eq. (\ref{cociente}) proves the result.
\end{proof}

Because $P_{\mu+1}(x,y)<P_{\mu}(x,y)$ (see (\ref{RecQ})) we also have that
\begin{corollary}
\label{cotaPP}
\begin{equation}
\Frac{P_{\mu +1}(x,y)}{P_{\mu }(x,y)}<\min\{1,c_{\mu +1}(x,y)\}.
\end{equation}
\end{corollary}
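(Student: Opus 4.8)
The plan is to combine Theorem \ref{cotaPse} with the elementary monotonicity fact recorded after the recurrence (\ref{RecQ}). First I would recall that the backward recurrence for the $P$-function in (\ref{RecQ}), namely $P_{\mu+1}(x,y)=P_{\mu}(x,y)-F_{\mu}(x,y)$ with $F_{\mu}(x,y)=(y/x)^{\mu/2}e^{-x-y}I_{\mu}(2\sqrt{xy})$, shows that the correction term $F_{\mu}(x,y)$ is strictly positive for $x>0$, $y>0$, $\mu>0$ (positivity of $I_{\mu}$ on the positive axis and, if $x\to0^{+}$, the limiting form \cite[10.30.1]{Olver:2010:BF}); hence $P_{\mu+1}(x,y)<P_{\mu}(x,y)$. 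Since $P_{\mu}(x,y)>0$ under the standing assumptions of the Remark, dividing by $P_{\mu}(x,y)$ yields $P_{\mu+1}(x,y)/P_{\mu}(x,y)<1$.

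Next I would simply quote Theorem \ref{cotaPse}, which already gives $P_{\mu+1}(x,y)/P_{\mu}(x,y)<c_{\mu+1}(x,y)$. Having two upper bounds for the same quantity, the ratio is below their minimum, which is exactly the claimed inequality
\[
\frac{P_{\mu+1}(x,y)}{P_{\mu}(x,y)}<\min\{1,c_{\mu+1}(x,y)\}.
\]
So the entire proof is the one-line observation that an infimum of valid upper bounds is a valid upper bound.

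There is essentially no obstacle here; the only point requiring a word of care is the justification that $F_{\mu}(x,y)>0$ (and in particular that it does not vanish), so that the inequality $P_{\mu+1}<P_{\mu}$ is strict rather than weak — but this is immediate from $I_{\mu}(t)>0$ for $t>0$, $\mu\ge-1$, together with the $x\to0^{+}$ limiting form, exactly as already used in Section \ref{convex}. One might also note, for emphasis, that the bound is genuinely informative in both regimes: by Lemma \ref{coefi} one has $c_{\mu+1}(x,y)<1$ precisely when $y<x+\mu+1/2$ (for $\mu\ge-1/2$), so the constant $1$ is the better bound for small $y$ and $c_{\mu+1}(x,y)$ is the better bound for large $y$.
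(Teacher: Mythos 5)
Your argument is exactly the paper's: the recurrence (\ref{RecQ}) gives $P_{\mu+1}(x,y)<P_{\mu}(x,y)$ (since $F_{\mu}(x,y)>0$ and $P_{\mu}(x,y)>0$), and combining this with Theorem \ref{cotaPse} yields the stated minimum, so the proposal is correct and takes essentially the same route. Only your closing aside overstates Lemma \ref{coefi}: it gives the one-sided implications $c_{\mu+1}<1$ if $y<x+\mu+1/2$ and $c_{\mu+1}>1$ if $y>x+\mu+1$, not a ``precisely when'' characterization; this does not affect the proof.
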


\begin{remark}
The bound $P_{\mu +1}(x,y)/P_{\mu }(x,y)<c_{\mu +1}(x,y)$ is sharp as $x\rightarrow +\infty$, 
$y\rightarrow 0$ and as $\mu\rightarrow +\infty$.
Indeed, Eq. (\ref{cociente}) shows (considering the limiting forms 10.30.4, 10.30.1, 10.41.1 of \cite{Olver:2010:BF}) 
that $h_{\mu}(x,y)=P_{\mu +1}(x,y)/P_{\mu }(x,y)$
is such that
$$
\lim_{x\rightarrow +\infty}\Frac{1}{c_{\mu +1}(x,y)}h_{\mu}(x,y)=
\lim_{y\rightarrow 0}\Frac{1}{c_{\mu +1}(x,y)}h_{\mu}(x,y)=
\lim_{\mu\rightarrow +\infty}\Frac{1}{c_{\mu +1}(x,y)}h_{\mu}(x,y)=1 .
$$

The trivial bound $P_{\mu +1}(x,y)/P_{\mu}(x,y)<1$ is sharp as $y\rightarrow +\infty$.
\end{remark}

\begin{theorem}
\label{cociP}
The function $h_{\mu}(x,y)=P_{\mu+1}(x,y)/P_{\mu}(x,y)$ is increasing as a function of
$y$ and decreasing as a function of $x$ and $\mu$.
\end{theorem}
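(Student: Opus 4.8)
The plan is to exploit the series representation (\ref{cociente}) together with the monotonicity properties of $c_\mu(x,y)$ established in Lemma \ref{coefi}. Write
$$
h_\mu(x,y)=c_{\mu+1}(x,y)\,R_\mu(x,y),\quad
R_\mu(x,y)=\Frac{1+\displaystyle\sum_{j=1}^{\infty}\prod_{i=1}^{j}c_{\mu+i+1}}{1+\displaystyle\sum_{j=1}^{\infty}\prod_{i=1}^{j}c_{\mu+i}},
$$
so that $h_\mu$ is a product of a "leading coefficient" $c_{\mu+1}$ and a "correction factor" $R_\mu$ which is itself built entirely out of the $c_\nu$'s. Since Lemma \ref{coefi} tells us that each $c_\nu(x,y)$ is increasing in $y$ and decreasing in $x$ and in $\nu$, the idea is that these monotonicity properties should propagate through the ratio of series. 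The monotonicity in $\mu$ is the cleanest: shifting $\mu\mapsto\mu+1$ replaces every factor $c_{\mu+i}$ and $c_{\mu+i+1}$ in both numerator and denominator by a smaller quantity, and moreover the whole expression for $h_{\mu+1}$ has the same shape as $h_\mu$ but with index shifted, so $h_{\mu+1}(x,y)<h_\mu(x,y)$ should follow from termwise comparison once one checks that the map $(t_1,t_2,\dots)\mapsto t_1\frac{1+\sum\prod t_{i+1}}{1+\sum\prod t_i}$... — actually it is cleaner to argue directly from (\ref{seriey}): $P_{\mu+1}/P_\mu$ versus $P_{\mu+2}/P_{\mu+1}$, i.e. compare $P_{\mu+1}^2$ with $P_\mu P_{\mu+2}$, which by (\ref{seriey}) is a statement about log-convexity of $\mu\mapsto P_\mu$ in the discrete variable; this in turn should follow from the log-convexity (Turán-type inequality) of $\nu\mapsto I_\nu$ and the structure of the sum.

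For monotonicity in $x$ and $y$, I would instead use a differential approach based on the recurrence (\ref{RecQ}) and the derivative formulas (\ref{de1})–(\ref{de2}), which hold for $P$ as well. Writing $h=h_\mu=P_{\mu+1}/P_\mu$ and using $\partial P_\mu/\partial x = P_{\mu+1}-P_\mu$ (and the analogous formula with $\mu+1$), one computes
$$
\Frac{\partial h}{\partial x}=\Frac{(P_{\mu+2}-P_{\mu+1})P_\mu-P_{\mu+1}(P_{\mu+1}-P_\mu)}{P_\mu^2}
=\Frac{P_{\mu+2}}{P_\mu}-\Frac{P_{\mu+1}^2}{P_\mu^2}
= h\big(h_{\mu+1}-h_\mu\big),
$$
and since $h>0$, the sign of $\partial h/\partial x$ is exactly the sign of $h_{\mu+1}-h_\mu$, which we have just shown to be negative; hence $h_\mu$ is decreasing in $x$. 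The $y$-derivative is handled the same way using $\partial P_\mu/\partial y = P_{\mu-1}-P_\mu$: one gets
$$
\Frac{\partial h}{\partial y}=\Frac{(P_\mu-P_{\mu+1})P_\mu-P_{\mu+1}(P_{\mu-1}-P_\mu)}{P_\mu^2}
=1-h_\mu\,\Frac{P_{\mu-1}}{P_\mu}+ \text{(lower terms)},
$$
which after simplification becomes $\partial h/\partial y = 1 + h_\mu^2 - h_\mu(1+1/h_{\mu-1})$ or, more usefully, a multiple of $(h_{\mu-1}-h_\mu)$ plus a positive remainder; one then invokes the $\mu$-monotonicity again to conclude positivity. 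So the whole theorem reduces to a single nontrivial fact — monotonicity of $h_\mu$ in $\mu$ — and two short quotient-rule computations that convert the $x$- and $y$-derivatives into comparisons between $h_\mu$ and its $\mu$-shift.

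The main obstacle is thus establishing that $\mu\mapsto h_\mu(x,y)=P_{\mu+1}/P_\mu$ is decreasing, equivalently that $\mu\mapsto P_\mu(x,y)$ is log-convex in $\mu$. The cleanest route I would try is the integral representation: from (\ref{eq:defPmu}), up to a positive $\mu$-independent factor, $P_\mu$ is a moment-type integral against the positive measure $t^{-1/2}e^{-t-x}\,dt$ on $(0,y)$ of the function $\phi_\mu(t)=(t/x)^{\mu/2}I_{\mu-1}(2\sqrt{xt})$; if one can show $\mu\mapsto\phi_\mu(t)$ is log-convex for each fixed $t$ (which follows from the Turán inequality $I_{\nu-1}I_{\nu+1}\le I_\nu^2$ for modified Bessel functions, a classical result), then log-convexity of $P_\mu$ follows from the Cauchy–Schwarz / Hölder inequality for integrals, exactly as in the standard proof that integrals of log-convex families are log-convex. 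A fully self-contained alternative is to mimic the Riccati-comparison argument used for $g_\mu$ in the proof of Lemma \ref{coefi}: the ratios $h_\mu$ satisfy, via (\ref{RecQ}), the nonlinear recurrence $h_\mu = 1/(1+c_\mu/c_\mu\cdots)$... more precisely (\ref{RecQ}) gives $h_\mu^{-1} = 1 - c_{\mu+1}/h_{\mu+1}\cdot(\text{something})$ — one can derive a clean first-order recurrence relating $h_\mu$, $h_{\mu+1}$ and $c_{\mu+1}$ and then run a minimal-solution / induction-from-infinity argument using the known limit $h_\mu/c_{\mu+1}\to 1$ from (\ref{limimu}). I expect the integral/Turán approach to be the shortest to write down rigorously.
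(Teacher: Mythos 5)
Your reduction of the $x$- and $y$-monotonicity to the $\mu$-monotonicity is correct and attractive: indeed $\Frac{\partial h_\mu}{\partial x}=h_\mu(h_{\mu+1}-h_\mu)$, and the $y$-derivative simplifies (more cleanly than your displayed expression) to $\Frac{\partial h_\mu}{\partial y}=1-h_\mu/h_{\mu-1}=(h_{\mu-1}-h_\mu)/h_{\mu-1}$, so both signs follow from $h_\mu$ being decreasing in $\mu$ (with the caveat that the $y$-formula uses $P_{\mu-1}$ and so only covers $\mu>1$; the paper's Riccati form $\frac{1}{c_\mu}(h_\mu-1)(h_\mu-c_\mu)$ avoids this and, combined with Corollary \ref{cotaPP}, gives the $x$- and $y$-monotonicity directly without invoking the $\mu$-result at all). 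The problem is that your keystone — monotonicity in $\mu$ — is not established, and your primary route for it cannot work. Decreasing $h_\mu=P_{\mu+1}/P_\mu$ in $\mu$ is equivalent to $P_{\mu+1}^2\ge P_\mu P_{\mu+2}$, i.e.\ to \emph{log-concavity} of $\mu\mapsto P_\mu$ (this is exactly the Tur\'an inequality of Corollary \ref{TurP}, which the paper deduces \emph{from} the theorem), not log-convexity as you state. The H\"older/Cauchy--Schwarz mechanism you invoke transfers pointwise log-\emph{convexity} of an integrand family to the integral; there is no analogous elementary transfer for log-concavity, and the Bessel inequality you cite, $I_{\nu-1}I_{\nu+1}\le I_\nu^2$, expresses log-concavity of $\nu\mapsto I_\nu$, so it feeds the wrong hypothesis into the wrong machine. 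Thus even after correcting the direction, the integral/Tur\'an argument collapses; proving joint log-concavity and integrating it out (Pr\'ekopa-type results) would be a genuinely harder statement that you neither state nor verify.

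Your fallback suggestion — mimic the Riccati-comparison argument of Lemma \ref{coefi} — is in fact what the paper does, but you leave it as an unexecuted sketch with a garbled recurrence. Concretely, the paper fixes $x$, notes that $h_\mu(x,y)\approx c_{\mu+1}(x,y)$ as $y\to 0$, so the ordering in $\mu$ holds near $y=0$ by Lemma \ref{coefi}, and then shows the orderings cannot cross: at a fixed value of $h_\mu$ the right-hand side of the Riccati equation (\ref{ricH}) has $\mu$-derivative $h_\mu(h_\mu-1)\,\partial_\mu c_\mu^{-1}<0$ (using $0<h_\mu<1$ and $\partial_\mu c_\mu<0$), which forbids a first crossing point. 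Without this (or an equivalent proof of $P_{\mu+1}^2\ge P_\mu P_{\mu+2}$, e.g.\ via a careful argument on the series (\ref{seriey}) that tails of a log-concave family remain log-concave, which would still only give integer shifts rather than monotonicity in continuous $\mu$), your proposal proves nothing: all three monotonicity claims hinge on the one step that is missing.
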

\begin{proof}
Considering Eqs. (\ref{TTRR}) and (\ref{de1}) for the $P$ function we 
obtain
\begin{equation}
\label{ricH}
\Frac{\partial h_{\mu}}{\partial y}
=\Frac{1}{c_{\mu}}(h_\mu -1)(h_{\mu}-c_{\mu}),
\end{equation}
and Corollary \ref{cotaPP} proves that $\partial h_{\mu}(x,y)/\partial y >0$ because $h_{\mu}<1$ and $h_{\mu}<c_{\mu+1}<c_{\mu}$.

Proceeding similarly with respect to $x$, we have, using (\ref{TTRR}) and (\ref{de2}),
$$
\Frac{\partial h_{\mu}}{\partial x}=-(h_{\mu}-1)(h_{\mu}-c_{\mu+1})<0,
$$
which implies that $\partial h_{\mu}/\partial x<0$.

With regard to the monotonicity with respect to $\mu$, the proof is similar to the proof of the monotonicity 
of $c_{\mu}$ with respect
to $\mu$ (Lemma \ref{coefi}, see in particular the footnote). Indeed, because for 
small $y$ we have $h_{\mu}(x,y)\approx c_{\mu+1}(x,y)$ then $h_{\mu}(x,0)<h_{\mu'}(x,0)$ if $\mu>\mu'$ and in addition, for fixed
$h_{\mu}$ and taking the derivative of (\ref{ricH}), we have
$$
\Frac{\partial}{\partial \mu}h_{\mu}'=h_\mu (h_\mu -1) \Frac{\partial}{\partial \mu}c_{\mu}^{-1}<0
$$
(because $0<h_\mu<1$ and $\partial c_\mu /\partial \mu<0$). These two facts are enough to prove that $h_\mu$ is monotonically 
 decreasing with respect to $\mu$.

\end{proof}

An immediate consequence of the monotonicity property with respect to $\mu$ is the following Tur\'an-type
inequality
\begin{corollary}
\label{TurP}
\begin{equation}
\label{PP}
P_{\mu +1}(x,y)^2-P_{\mu}(x,y)P_{\mu+2}(x,y)>0.
\end{equation}
\end{corollary}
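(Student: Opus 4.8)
The plan is to read off the inequality directly from Theorem~\ref{cociP}. Since $h_{\mu}(x,y)=P_{\mu+1}(x,y)/P_{\mu}(x,y)$ is a strictly decreasing function of $\mu$, replacing $\mu$ by $\mu+1$ gives $h_{\mu}(x,y)>h_{\mu+1}(x,y)$ for all $x\ge 0$, $y>0$, $\mu>0$, that is,
\begin{equation}
\label{eq:TurPstep}
\Frac{P_{\mu+1}(x,y)}{P_{\mu}(x,y)}>\Frac{P_{\mu+2}(x,y)}{P_{\mu+1}(x,y)}.
\end{equation}

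The second step is simply to clear denominators. Under the standing assumptions $P_{\mu}(x,y)>0$ and $P_{\mu+1}(x,y)>0$ (recall that $P_{\nu}(x,y)>0$ for all $\nu>0$), so multiplying both sides of \eqref{eq:TurPstep} by the positive quantity $P_{\mu}(x,y)P_{\mu+1}(x,y)$ preserves the inequality and yields exactly $P_{\mu+1}(x,y)^2-P_{\mu}(x,y)P_{\mu+2}(x,y)>0$, which is \eqref{PP}.

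There is essentially no obstacle here: the only point requiring a word of care is that the inequality in \eqref{eq:TurPstep} must be strict, and this is indeed what Theorem~\ref{cociP} delivers, since the monotonicity in $\mu$ was established there via the strict sign of $\partial(h_{\mu}')/\partial\mu$ together with a strict ordering of the initial values $h_{\mu}(x,0^{+})$. Hence the conclusion is the stated strict Tur\'an-type inequality, valid for $x\ge 0$, $y>0$ and $\mu>0$.
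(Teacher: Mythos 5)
Your proof is correct and is exactly the paper's argument: the corollary is stated there as an immediate consequence of the monotonicity of $h_{\mu}(x,y)=P_{\mu+1}(x,y)/P_{\mu}(x,y)$ in $\mu$ from Theorem~\ref{cociP}, followed by clearing the positive denominators. Nothing further is needed.
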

The same inequality will hold for the $Q$-function and for the same reason.

\subsubsection{Convergent bounds for $P_{\mu +1}(x,y)/P_{\mu }(x,y)$}

We observe that the argument used in the proof of Theorem \ref{cotaPse} can be used to obtain sharper bounds, as we next
show.

\begin{theorem}
\label{cociente2}
\begin{equation}
\Frac{\displaystyle\sum_{k=0}^n F_{\mu+k+1}}{\displaystyle\sum_{k=0}^{n+1} 
F_{\mu+k}}=l_{\mu}^{(n)}(x,y)<\Frac{P_{\mu +1}(x,y)}{P_{\mu }(x,y)}<u_{\mu }^{(n)}(x,y)=
\Frac{\displaystyle\sum_{k=0}^n F_{\mu+k+1}}{\displaystyle\sum_{k=0}^n F_{\mu+k}}, 
\end{equation}
for any $n\ge 0$.

These are bounds converging to $\Frac{P_{\mu +1}(x,y)}{P_{\mu }(x,y)}$ as $n\rightarrow +\infty$.
The bounds 
are sharp as $x\rightarrow +\infty$, $y\rightarrow 0$ and $\mu\rightarrow +\infty$.

\end{theorem}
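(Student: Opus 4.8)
The plan is to exploit the same mechanism that drove Theorem~\ref{cotaPse}, but now applied to the tail of the series~(\ref{seriey}) rather than to the whole series at once. Write $P_{\mu}(x,y)=\sum_{k=0}^{\infty}F_{\mu+k}$ and $P_{\mu+1}(x,y)=\sum_{k=0}^{\infty}F_{\mu+k+1}$. Fix $n\ge 0$ and split each sum into a head consisting of the first terms and a tail. Since $P_{\mu+1}=\sum_{k=0}^{n}F_{\mu+k+1}+P_{\mu+n+2}$ and $P_{\mu}=\sum_{k=0}^{n+1}F_{\mu+k}+P_{\mu+n+2}$ (using $\sum_{k=0}^{n+1}F_{\mu+k}$ to absorb the first $n+2$ terms so that both tails start at the same index $\mu+n+2$), the quotient $P_{\mu+1}/P_{\mu}$ is a mediant-type combination of the finite ratio $u_{\mu}^{(n)}=\big(\sum_{k=0}^{n}F_{\mu+k+1}\big)/\big(\sum_{k=0}^{n}F_{\mu+k}\big)$, the ratio $\big(\sum_{k=0}^{n}F_{\mu+k+1}\big)/\big(\sum_{k=0}^{n+1}F_{\mu+k}\big)=l_{\mu}^{(n)}$, and the tail ratio $P_{\mu+n+2}/P_{\mu+n+2-1}$.

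Concretely, I would use the elementary fact that if $A,B,C,D>0$ then $\frac{A+C}{B+D}$ lies strictly between $\frac{A}{B}$ and $\frac{C}{D}$. Apply this with $A=\sum_{k=0}^{n}F_{\mu+k+1}$, $B=\sum_{k=0}^{n+1}F_{\mu+k}$, $C=P_{\mu+n+2}$, $D=P_{\mu+n+1}$, noting $A+C=P_{\mu+1}$ and $B+D=\sum_{k=0}^{n+1}F_{\mu+k}+P_{\mu+n+1}$. The point $B+D$ is not quite $P_{\mu}$ because $P_{\mu+n+1}=F_{\mu+n+1}+P_{\mu+n+2}$ already contains $F_{\mu+n+1}$ which also appears in $B$; so I would instead set $D=P_{\mu+n+2}$ and $B=\sum_{k=0}^{n+1}F_{\mu+k}$, giving $B+D=P_{\mu}$ exactly and $A+C=P_{\mu+1}$ exactly. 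Then $P_{\mu+1}/P_{\mu}$ lies strictly between $A/B=l_{\mu}^{(n)}$ and $C/D=P_{\mu+n+2}/P_{\mu+n+2}=1$ — which only recovers the trivial bound. The right decomposition for the upper bound is therefore the one matching $\sum_{k=0}^{n}F_{\mu+k+1}$ against $\sum_{k=0}^{n}F_{\mu+k}$: write $P_{\mu+1}=\sum_{k=0}^{n}F_{\mu+k+1}+P_{\mu+n+2}$ and $P_{\mu}=\sum_{k=0}^{n}F_{\mu+k}+P_{\mu+n+1}$, and observe via Corollary~\ref{cotaPP} that the tail ratio $P_{\mu+n+2}/P_{\mu+n+1}<1\le u_{\mu}^{(n)}/u_{\mu}^{(n)}$; more precisely one checks $P_{\mu+n+2}/P_{\mu+n+1}<u_{\mu}^{(n)}$ and $P_{\mu+n+2}/P_{\mu+n+1}>l_{\mu}^{(n)}$ using the monotonicity of $c_{\mu}$ in $\mu$ exactly as in the proof of Theorem~\ref{cotaPse}, and then the mediant inequality pins $P_{\mu+1}/P_{\mu}$ between $l_{\mu}^{(n)}$ and $u_{\mu}^{(n)}$.

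The cleanest route, which I would adopt, is to induct on $n$ using the Riccati-type bookkeeping already in hand: rewrite $P_{\mu+1}/P_{\mu}$ by peeling off one term, $P_{\mu+1}/P_{\mu}=\big(F_{\mu+1}+P_{\mu+2}\big)/\big(F_{\mu}+P_{\mu+1}\big)$, express this as a function of $P_{\mu+2}/P_{\mu+1}$, and note that this function is increasing in its argument (its derivative has fixed sign since $F_{\mu},F_{\mu+1}>0$); then feed in the inductive bounds $l_{\mu+1}^{(n-1)}<P_{\mu+2}/P_{\mu+1}<u_{\mu+1}^{(n-1)}$ and verify algebraically that the resulting endpoints collapse to $l_{\mu}^{(n)}$ and $u_{\mu}^{(n)}$. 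The base case $n=0$ is $l_{\mu}^{(0)}=F_{\mu+1}/(F_{\mu+1}+F_{\mu})<P_{\mu+1}/P_{\mu}<F_{\mu+1}/F_{\mu}=u_{\mu}^{(0)}$, where the upper bound is Theorem~\ref{cotaPse} (since $F_{\mu+1}/F_{\mu}=c_{\mu+1}$) and the lower bound follows from $P_{\mu+2}>0$ applied to the peeled identity.

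For convergence: the gap $u_{\mu}^{(n)}-l_{\mu}^{(n)}$ equals $\big(\sum_{k=0}^{n}F_{\mu+k+1}\big)\cdot F_{\mu+n+1}\big/\big[\big(\sum_{k=0}^{n}F_{\mu+k}\big)\big(\sum_{k=0}^{n+1}F_{\mu+k}\big)\big]$, which is bounded by $F_{\mu+n+1}/\sum_{k=0}^{n+1}F_{\mu+k}\le F_{\mu+n+1}/P_{\mu}\cdot$(something bounded), and $F_{\mu+n+1}\to 0$ as $n\to\infty$ since $\sum_k F_{\mu+k}=P_{\mu}<\infty$; hence both sequences converge to $P_{\mu+1}/P_{\mu}$. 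The sharpness claims as $x\to+\infty$, $y\to 0$, $\mu\to+\infty$ follow because in each of these limits $c_{\mu+k}\to 0$ uniformly fast enough (via the limiting forms 10.30.4, 10.30.1, 10.41.1 of~\cite{Olver:2010:BF}) that even $u_{\mu}^{(0)}=c_{\mu+1}$ is already asymptotically exact, exactly as recorded in the Remark after Corollary~\ref{cotaPP}. The main obstacle is purely organizational: getting the index ranges in the two finite sums to line up so that the mediant/monotonicity argument yields precisely $l_{\mu}^{(n)}$ and $u_{\mu}^{(n)}$ rather than a weaker pair; once the peeling identity and its monotonicity in $P_{\mu+2}/P_{\mu+1}$ are set up, the induction is routine.
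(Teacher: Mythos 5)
Your proposal is essentially correct, and its two halves relate to the paper's proof in different ways. The lower bound is obtained in the paper exactly as in your first decomposition: by (\ref{sumaPs}), $P_{\mu+1}=\sum_{k=0}^{n}F_{\mu+k+1}+S$ and $P_{\mu}=\sum_{k=0}^{n+1}F_{\mu+k}+S$ with the \emph{same} tail $S=\sum_{k=n+1}^{\infty}F_{\mu+k+1}$, and adding the same positive quantity to numerator and denominator of a fraction smaller than one increases it; note that what you dismiss as ``only the trivial bound'' is in fact precisely the desired inequality $P_{\mu+1}/P_{\mu}>l_{\mu}^{(n)}$ (only the upper half of that mediant estimate is trivial). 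For the upper bound the paper argues differently: it uses the representation (\ref{cociente}) of $P_{\mu+1}/P_{\mu}$ as $c_{\mu+1}$ times a ratio of series of products of the $c_{\mu+i}$'s and discards the tails $A_n<B_n$. Your alternative—splitting off the heads $\sum_{k=0}^{n}F_{\mu+k+1}$ and $\sum_{k=0}^{n}F_{\mu+k}$ and controlling the tail ratio via $P_{\mu+n+2}/P_{\mu+n+1}<c_{\mu+n+2}<c_{\mu+n+1}\le u_{\mu}^{(n)}$ (Theorem \ref{cotaPse}, monotonicity of $c_{\mu}$ in $\mu$ from Lemma \ref{coefi}, and the fact that a ratio of sums is at least the minimum of the termwise ratios $F_{\mu+k+1}/F_{\mu+k}=c_{\mu+k+1}$)—is valid and arguably more elementary, though the step $u_{\mu}^{(n)}\ge c_{\mu+n+1}$ should be stated explicitly rather than waved at. Your convergence argument (gap $=u_{\mu}^{(n)}F_{\mu+n+1}/\sum_{k=0}^{n+1}F_{\mu+k}\to 0$) and the sharpness discussion match what the paper leaves as immediate. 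Incidentally, the side claim $P_{\mu+n+2}/P_{\mu+n+1}>l_{\mu}^{(n)}$ is false for large $n$ (the left side tends to $0$ while $l_{\mu}^{(n)}\to P_{\mu+1}/P_{\mu}>0$), but you never actually need it.

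The one genuine inaccuracy is in your ``cleanest route'': the inductive endpoints do \emph{not} both collapse to $l_{\mu}^{(n)}$ and $u_{\mu}^{(n)}$. Writing $P_{\mu+1}/P_{\mu}=f(t)$ with $f(t)=c_{\mu+1}/(1+c_{\mu+1}-t)$ and $t=P_{\mu+2}/P_{\mu+1}$, the lower endpoint indeed satisfies $f\bigl(l_{\mu+1}^{(n-1)}\bigr)=l_{\mu}^{(n)}$ exactly (this is the paper's observation that the continued-fraction approximants (\ref{cfP2}) reproduce the lower bounds of the theorem), but the upper endpoint gives the continued-fraction-type bound $c_{\mu+1}/\bigl(1+c_{\mu+1}-u_{\mu+1}^{(n-1)}\bigr)$, which is strictly \emph{sharper} than $u_{\mu}^{(n)}$; already for $n=1$ the two differ by a Tur\'an-type term proportional to $F_{\mu+2}\bigl(F_{\mu+1}^2-F_{\mu}F_{\mu+2}\bigr)>0$. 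So the induction proves a stronger statement, and to deduce the theorem as stated you still need the extra comparison $c_{\mu+1}/\bigl(1+c_{\mu+1}-u_{\mu+1}^{(n-1)}\bigr)\le u_{\mu}^{(n)}$, which after clearing denominators reduces to $\sum_{k=0}^{n}F_{\mu+k}\bigl(c_{\mu+1}-c_{\mu+k+1}\bigr)\ge 0$ and again uses the monotonicity of $c_{\mu}$. Since your mediant argument already suffices for the theorem, this is a repairable slip rather than a fatal gap, but the phrase ``verify algebraically that the endpoints collapse'' hides a real step.
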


\begin{proof} 
Considering (\ref{cociente}) we have
$$
\Frac{P_{\mu +1}(x,y)}{P_{\mu }(x,y)}=c_{\mu +1}(x,y)
\Frac{A_n+1+\displaystyle\sum_{j=1}^{n} \left(\prod_{i=1}^{j} c_{\mu+i+1}\right)}
{B_n+1+\displaystyle\sum_{j=1}^{n} \left(\prod_{i=1}^{j} c_{\mu+i}\right)},
$$
where 
$$A_n=\displaystyle\sum_{j=n+1}^{\infty} \left(\prod_{i=1}^{j} c_{\mu+i+1}\right)<B_n=\displaystyle\sum_{j=n+1}^{\infty} \left(\prod_{i=1}^{j} c_{\mu+i}\right)$$
because $c_\nu$ is decreasing as a function of $\nu$. Therefore
$$
\Frac{P_{\mu +1}(x,y)}{P_{\mu }(x,y)}<c_{\mu +1}(x,y)
\Frac{1+\displaystyle\sum_{j=1}^{n} \left(\prod_{i=1}^{j} c_{\mu+i+1}\right)}
{1+\displaystyle\sum_{j=1}^{n} \left(\prod_{i=1}^{j} c_{\mu+i}\right)}
$$
which gives the upper bound $u_{\mu }^{(n)}(x,y)$.

On the other hand, from (\ref{sumaPs}) we have, denoting $S=\displaystyle\sum_{k=n+1}^{\infty} F_{\mu+k+1}$,
$$
\Frac{P_{\mu +1}(x,y)}{P_{\mu }(x,y)}=\Frac{S+\displaystyle\sum_{k=0}^n F_{\mu+k+1}}{S+\displaystyle\sum_{k=0}^{n+1}
F_{\mu+k}}>\Frac{\displaystyle\sum_{k=0}^n F_{\mu+k+1}}{\displaystyle\sum_{k=0}^{n+1}
F_{\mu+k}}.
$$

The convergence is immediate due to Eq. (\ref{cociente}) and the sharpness is also straightforward to verify. 
\end{proof}

\subsection{The ratio $Q_{\mu+1}/Q_{\mu}$}

For the $Q$ function, the equations for the derivatives and the monotonicity properties remain the same. But the bounds 
are different; we obtain these bounds as a consequence of monotonicity properties.

\begin{theorem}
\label{cococo}
The function $H_{\mu}(x,y)=Q_{\mu+1}(x,y)/Q_{\mu}(x,y)$ is increasing as a function of
$y$ and decreasing as a function of $x$ and $\mu$. In addition
\begin{equation} 
\label{boo1}
H_{\mu}(x,y)>\max\{1,c_{\mu}(x,y)\}.
\end{equation}
\end{theorem}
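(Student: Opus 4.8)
The plan is to mirror the structure used for the $P$-function but with reversed inequalities, exploiting that $Q_\mu$ is the \emph{dominant} solution of the recurrence (\ref{TTRR}) whereas $P_\mu$ is the minimal one. First I would establish the monotonicity statements. Taking the derivative of $H_\mu(x,y)=Q_{\mu+1}(x,y)/Q_\mu(x,y)$ with respect to $y$ using (\ref{de1}) for the $Q$-function and the three-term recurrence (\ref{TTRR}), I expect to obtain exactly the same Riccati-type identity as (\ref{ricH}), namely
\[
\Frac{\partial H_{\mu}}{\partial y}=\Frac{1}{c_{\mu}}(H_\mu-1)(H_\mu-c_\mu),
\]
and with respect to $x$ the analogue of the $P$-case, $\partial H_\mu/\partial x=-(H_\mu-1)(H_\mu-c_{\mu+1})$. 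The monotonicity with respect to $\mu$ would then follow from the same footnote argument as in the proof of Theorem \ref{cociP}, using the differentiated Riccati equation at fixed $H_\mu$ together with $\partial c_\mu/\partial\mu<0$ (Lemma \ref{coefi}); the only change is the starting condition at small $y$, where $Q_\mu(x,y)\to 1$ and $H_\mu(x,0)=1$, so one instead anchors the comparison at $y\to+\infty$ or uses the sign structure directly.

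The crux, and the main obstacle, is the bound $H_\mu>\max\{1,c_\mu\}$. The inequality $H_\mu>1$ is immediate: from (\ref{RecQ}) we have $Q_{\mu+1}(x,y)=Q_\mu(x,y)+F_\mu(x,y)>Q_\mu(x,y)$ since $F_\mu>0$. For $H_\mu>c_\mu$ I would argue via the recurrence. Writing (\ref{TTRR}) for $y_\mu=Q_\mu(x,y)$ as $Q_{\mu+1}-Q_\mu=c_\mu(Q_\mu-Q_{\mu-1})$, i.e. $Q_{\mu+1}-Q_\mu=c_\mu(Q_\mu-Q_{\mu-1})$, and telescoping backward, one gets $Q_{\mu+1}-Q_\mu=\bigl(\prod_{k}c_{\mu-k}\bigr)(\text{something})$; more cleanly, since $Q_\mu=Q_{\mu+1}+F_\mu$ we can write $Q_\mu(x,y)=\sum_{k\ge 0}F_{\mu+k}(x,y)+\lim_{n}Q_{\mu+n}$, but $Q_{\mu+n}\to 1$, not $0$, so this route needs care. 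Instead I would use the ratio representation analogous to (\ref{cociente}): from the backward recurrence $Q_\mu-Q_{\mu+1}=F_\mu$ and $c_{\mu+1}=F_{\mu}/F_{\mu-1}$ one checks
\[
\Frac{Q_{\mu+1}(x,y)}{Q_{\mu}(x,y)}=1+\Frac{F_{\mu}(x,y)}{Q_{\mu}(x,y)},\qquad
\Frac{Q_{\mu}(x,y)}{Q_{\mu+1}(x,y)}=1-\Frac{F_\mu(x,y)}{Q_{\mu+1}(x,y)},
\]
and since $Q_\mu(x,y)<1$ (it is a cumulative distribution complement, strictly less than $1$ for $y>0$) we get $H_\mu>1+F_\mu$. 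To squeeze out $H_\mu>c_\mu$ I would combine the monotonicity in $\mu$ just proved with the limiting behaviour: by (\ref{limimu}), $H_\mu\to 1$ as $\mu\to+\infty$ while $c_\mu\to 0$, and one compares the Riccati equation $\partial H_\mu/\partial y=c_\mu^{-1}(H_\mu-1)(H_\mu-c_\mu)$ at the initial point $y\to 0^+$: there $H_\mu\to 1>0=c_\mu(x,0^+)$ and $H_\mu>1$ throughout, so if ever $H_\mu=c_\mu$ we would need $c_\mu>1$, forcing $H_\mu>1$ still, and a contradiction with the ordering of zeros of $(H_\mu-1)(H_\mu-c_\mu)$ follows exactly as in the contradiction arguments used repeatedly above (pick the first $y$ where $H_\mu$ meets $c_\mu$ and evaluate signs of derivatives).

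Concretely, the cleanest finish I would aim for: show $H_\mu>c_\mu$ by contradiction. Suppose $H_\mu(x,y_0)\le c_\mu(x,y_0)$ for some $y_0$. Since $H_\mu>1$ always, this forces $c_\mu(x,y_0)>1$, hence $c_\mu(x,y)>1$ for all $y\ge y_0$ (Lemma \ref{coefi}: $c_\mu$ increasing in $y$). On the interval where $1<H_\mu<c_\mu$ the Riccati identity gives $\partial H_\mu/\partial y<0$, so $H_\mu$ decreases while $c_\mu$ increases, so once $H_\mu\le c_\mu$ it stays strictly below — but then as $y\to+\infty$, $c_\mu\to+\infty$ and $H_\mu$ is decreasing and bounded below by $1$, so $H_\mu\to\ell\ge 1$; plugging into the Riccati equation and using $c_\mu\to+\infty$ yields $\partial H_\mu/\partial y\to -\infty\cdot(\ell-1)$, contradicting convergence unless $\ell=1$, and even $\ell=1$ is incompatible with $H_\mu$ having been pushed strictly below a quantity tending to $+\infty$ from a value near $1$ at $y_0$ — a quantitative estimate of the decay rate closes this. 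Alternatively, and more safely, I would replace this limiting argument by the direct series bound: from (\ref{cociente}) applied to $Q$ (which requires rederiving that identity from the backward recurrence, using that $Q_\mu$ has the same recurrence), $H_\mu$ is a ratio with $c_{\mu+1}$ in front multiplied by a factor exceeding $1/c_{\mu+1}\cdot c_\mu$ because the dominant-solution normalization contributes the extra $+1$'s, yielding $H_\mu>c_\mu$; this parallels Theorem \ref{cotaPse} with the inequality direction flipped by the minimal/dominant dichotomy. The main obstacle is thus making the dominant-solution normalization rigorous; everything else is a routine transcription of the $P$-function proofs.
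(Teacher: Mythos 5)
Your overall structure (the Riccati equation (\ref{ricHQ}), the immediate inequality $H_\mu>1$ from (\ref{RecQ}), and the trapping argument: once $H_\mu$ meets $c_\mu$ it is forced into the region $1<H_\mu<c_\mu$ where $\partial H_\mu/\partial y<0$ and stays bounded there) is exactly the paper's, but you never supply the one fact that closes the contradiction, and that is a genuine gap. The paper finishes by showing that $H_\mu(x,y)$ is \emph{unbounded} as $y\to+\infty$: since $Q_\mu(x,+\infty)=0$, L'H\^opital's rule applied to the defining integrals gives $\lim_{y\to+\infty}Q_{\mu+1}(x,y)/Q_{\mu}(x,y)=\lim_{y\to+\infty}c_{\mu}(x,y)=+\infty$, which is flatly incompatible with $H_\mu$ being trapped below $c_\mu(x,y_1)$. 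Your route A instead argues that a trapped, decreasing $H_\mu$ would have a limit $\ell\ge 1$, computes (incorrectly, though harmlessly) the limiting derivative --- it tends to $-(\ell-1)$, not $-\infty\cdot(\ell-1)$, since $c_\mu^{-1}(H_\mu-1)(H_\mu-c_\mu)\to(\ell-1)(0-1)$ --- and concludes $\ell=1$; but the case $\ell=1$ is precisely the one your argument cannot exclude, and ``a quantitative estimate of the decay rate closes this'' is not carried out. Excluding $\ell=1$ is equivalent to knowing that $F_\mu(x,y)/Q_\mu(x,y)$ does not stay small, i.e.\ essentially the unboundedness statement you are missing.

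Your fallback route B does not repair this. The representation (\ref{cociente}) comes from the series (\ref{seriey}), $P_\mu=\sum_{k\ge0}F_{\mu+k}$, which holds because $P_\mu$ is the \emph{minimal} solution ($P_{\mu+n}\to0$). For the dominant solution the same telescoping gives $Q_\mu=1-\sum_{k\ge0}F_{\mu+k}$ (you note yourself that $Q_{\mu+n}\to1$), which is just complementarity and does not produce a ratio of the form (\ref{cociente}) with reversed inequalities; there is no ``dominant-solution normalization contributing extra $+1$'s'' that yields $H_\mu>c_\mu$ without further work. So the bound (\ref{boo1}) --- and with it the monotonicity in $y$, which in the paper (and implicitly in your plan) is deduced \emph{from} the bound via (\ref{ricHQ}) --- is not established. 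The monotonicity in $x$ and $\mu$ as a transcription of Theorem \ref{cociP} is fine (your remark about the degenerate anchor $H_\mu(x,0)=1$ and using the behaviour for small or large $y$ is the right kind of care), but the core inequality needs the L'H\^opital-type unboundedness argument or an equivalent substitute.
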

\begin{proof}

We prove the monotonicity with respect to $y$ and the bounds; the rest of properties are proved in the
same way as we did for the $P$-function (theorem \ref{cociP}).

Same as for the $P$ function, the ratio
$H_{\mu}(x,y)=Q_{\mu+1}(x,y)/Q_{\mu}(x,y)$ satisfies the Riccati equation
\begin{equation}
\label{ricHQ}
\Frac{\partial H_{\mu}(x,y)}{\partial y}=\Frac{1}{c_{\mu}}(H_\mu -1)(H_{\mu}-c_{\mu})
\end{equation}
and we would have $H'_{\mu}=0$, for $H_\mu$ equal to the characteristic roots $1$ and $c_{\mu}(x,y)$. 
Because $c_{\mu}(x,0)=0$ and $c_{\mu}(x,+\infty)=+\infty$ and $c_\mu$ is increasing as a function of $y$ 
(Lemma \ref{coefi}) there exists
a value $y_0$ such that $c_{\mu}(x,y_0)=1$ and the curves $H_{\mu}=1$ and $H_{\mu}=c_\mu (x,y)$ divide
the $(y,H_{\mu})$-plane in four different zones. We prove now that
the function of $y$ $H_{\mu}(x,y)=Q_{\mu}(x,y)/Q_{\mu-1}(x,y)$ lies in the region $H_{\mu}(x,y)>\max\{1,c_{\mu}(x,y)\}$. 

From (\ref{RecQ}) we have that $H_{\mu}(x,y)>1$. For $y>y_0$ (where $c_{\mu}(x,y)>1$) two different situations 
may occur: either $H_{\mu}(x,y)>c_{\mu}(x,y)$ for all $y>y_0$ or there exists
$y_1>y_0$ such that $H_\mu(x,y_1)=c_{\mu}(x,y_1)$. But this last situation can not occur, as we next prove, which shows that  
$H_{\mu}(x,y)=c_{\mu}(x,y)$ for all $y>y_0$ and then the bound (\ref{boo1}) holds.

Observe that if such a value $y_1$ existed, because $c_\mu$ is increasing as a function of $y$, 
for $y>y_0$ the graph of $H_\mu$ would enter the region $1<H_{\mu}(x,y)<c_{\mu}(x,y_1)$,
 where $\partial H_{\mu}(x,y)/\partial y <0$; but then
we would have $1<H_{\mu}(x,y)<c_{\mu}(x, y_1)$ for all $y>y_1$. But this is not the case, because $H_{\mu}(x,y)=Q_{\mu+1}(x,y)/Q_{\mu}(x,y)$ is unbounded as $y\rightarrow +\infty$. Indeed, with the definition (\ref{eq:defQmu}) and using
L'H\^opital's rule (recall that $Q_{\mu}(x,+\infty)=0$)
we have
$$
\lim_{y\rightarrow +\infty}\Frac{Q_{\mu+1}(x,y)}{Q_{\mu}(x,y)}=\lim_{y\rightarrow +\infty}c_{\mu}(x,y)=+\infty .
$$

We observe that the inequality $H_{\mu}(x,y)>\max\{1,c_{\mu}(x,y)\}$ together with (\ref{ricHQ}) 
proves that  $H_{\mu}(x,y)$ is increasing as a function of $y$.

\end{proof}

\begin{remark}
The bound $H_{\mu}(x,y)>c_{\mu}(x,y)$ is sharp for large $y$ because using asymptotic information \cite{Temme:1993:ANA} 
it is possible to check that
\begin{equation}
\lim_{y\rightarrow +\infty}\Frac{H_{\mu}(x,y)}{c_{\mu}(x,y)}=1.
\end{equation}
On the other hand, the bound $H_{\mu}(x,y)>1$ is sharp as $\mu\rightarrow +\infty$ (see (\ref{limimu})).

\end{remark}

\subsubsection{Bounds from the continued fraction}

From the recurrence relation (\ref{TTRR}) we can write
\begin{equation}
\label{CF1}
\Frac{P_{\mu +1}(x,y)}{P_{\mu }(x,y)}=\Frac{c_{\mu +1}}{1+c_{\mu +1}-\Frac{P_{\mu +2}(x,y)}{P_{\mu +1}(x,y)}}
\end{equation}
and because $P_{\mu}$ is a minimal solution of the three-term recurrence relation, iterating (\ref{CF1})
we obtain a convergent continued fraction. The successive approximants form
a convergent sequence of lower bounds and
\begin{equation}
\label{cfP2}
\Frac{P_{\mu +1}(x ,y)}{P_{\mu }(x,y )}>\Frac{c_{\mu +1}}{1+c_{\mu+1}\,-}\,\Frac{c_{\mu+2}}{1+c_{\mu+2}\,-}\,\ldots\,
\Frac{c_{\mu+k+1}}{1+c_{\mu+k+1}},
\end{equation}
which is the same as the lower bound in Theorem \ref{cociente2}.
In particular, with the first approximant we get
\begin{equation}
\label{cfP3}
\Frac{P_{\mu +1}(x ,y)}{P_{\mu }(x,y )}>\Frac{c_{\mu +1}}{1+c_{\mu+1}}.
\end{equation}

If the tail of the CF is substituted by an upper bound we obtain an upper bound. Using Theorem \ref{cotaPse}, we get
\begin{equation}
\label{cfP3}
\Frac{P_{\mu +1}(x ,y)}{P_{\mu}(x,y )}<\Frac{c_{\mu +1}}{1+c_{\mu +1}\,-}\,\Frac{c_{\mu+2}}{1+c_{\mu+2}\,-}\,\ldots\,
\Frac{c_{\mu+k +1}}{1+c_{\mu+k +1}-c_{\mu+k+2}},
\end{equation}
and in particular
\begin{equation}
\label{cfP4}
\Frac{P_{\mu +1}(x ,y)}{P_{\mu}(x,y )}<\Frac{c_{\mu +1}}{1+c_{\mu +1} -c_{\mu+2}}.
\end{equation}
For $Q_\mu$, because it is dominant, the application of the recurrence in the forward direction gives better
bounds. We write
\begin{equation}
\Frac{Q_{\mu+1}}{Q_\mu}=1+c_{\mu}-c_{\mu}\Frac{1}{\Frac{Q_{\mu}}{Q_{\mu-1}}}.
\end{equation}
From this we see that, because $c_{\mu}Q_{\mu-1}/Q_{\mu}>0$ if $\mu\ge 1$
\begin{equation}
\label{otrobo}
\Frac{Q_{\mu+1}}{Q_\mu}<1+c_{\mu},\mu\ge 1 .
\end{equation}
We don't consider further iterations.

The inequality (\ref{otrobo}) is valid for $\mu\ge 0$ if $xy\ge 1$ (see Corollary \ref{xyuno}).

\subsection{Bounds for the functions $P_\mu$ and $Q_\mu$}

Considering the inhomogeneous recurrence relations (\ref{RecQ}), the previous bounds on ratios can be
translated into bounds for the functions themselves. 

\subsubsection{The $P$-function}

We start with the function $P_\mu$, and we consider a bound $r_\mu$ such that
\begin{equation}
\Frac{P_{\mu+1}}{P_{\mu}}<r_{\mu}.
\end{equation}
For a lower bound all the subsequent inequalities will be reversed. In both cases, 
we assume that $r_\mu<1$.

Writing the recurrence relation (\ref{RecQ}) as $P_{\mu+1}=P_\mu-F_\mu$, 
 we have $P_\mu-F_\mu<r_\mu P_\mu$ and then
\begin{equation}
\label{unoco}
P_\mu < \Frac{1}{1-r_\mu}F_{\mu}=b^{(1)}_\mu,
\end{equation}
 where $F_\mu$ given by (\ref{Fmu}). On the other hand $P_{\mu+1}=P_\mu-F_\mu>P_{\mu+1}/r_\mu-F_\mu$ and therefore
\begin{equation}
P_{\mu}<r_{\mu-1} b^{(1)}_{\mu -1} =b^{(2)}_\mu.
\end{equation}
Considering  $r_{\mu}=c_{\mu+1}$ (Theorem \ref{cotaPse})
we have  $b^{(2)}_\mu/b^{(1)}_\mu=(1-c_{\mu+1})(1-c_{\mu})>1$
and the second bound is worse. The first bound gives
\begin{equation}
\label{pp2}
P_\mu <\Frac{1}{1-c_{\mu+1}}F_\mu ,
\end{equation}
valid when $c_{\mu+1}<1$.

Considering now
the lower bound with $r_\mu=c_{\mu +1}/(1+c_{\mu +1})$ (Eq. (\ref{cfP3})) we have 
$b^{(2)}_\mu/b^{(1)}_\mu=(1+c_{\mu +1})^{-1}<1$ and because we are dealing with lower bounds, the first
bound is preferable, which is
\begin{equation}
P_{\mu}>(1+c_{\mu +1})F_\mu .
\end{equation}

\subsubsection{The $Q$-function}We consider 
$$
\Frac{Q_{\mu+1}}{Q_\mu}>r_\mu
$$
and write the recurrence as $Q_{\mu+1}=Q_\mu+F_\mu$. Then $Q_\mu + F_\mu > r_\mu Q_\mu$ and
\begin{equation}
Q_\mu<\Frac{1}{r_\mu -1}F_\mu =B^{(1)}_\mu ,
\end{equation}
provided $r_\mu >1$. On the other hand $Q_{\mu+1}<Q_{\mu+1}/r_\mu +F_\mu$ and therefore
\begin{equation}
Q_{\mu}<r_{\mu-1}B^{(1)}_{\mu-1}=B^{(2)}_\mu
\end{equation}

Taking $r_\mu=c_\mu$,
$$
\Frac{B^{(2)}}{B^{(1)}}=\Frac{1-c_{\mu}^{-1}}{1-c_{\mu-1}^{-1}}<1.
$$

Therefore the first bound is superior when it holds. This gives
\begin{equation}
\label{qq1}
Q_{\mu}<\Frac{1}{1-c_{\mu-1}^{-1}}F_{\mu-1},\mu\ge 1 ,
\end{equation}
valid if $c_{\mu-1}>1$.
The second bound, valid if $c_{\mu}>1$, is
\begin{equation}
\label{qq1}
Q_{\mu}<\Frac{1}{c_{\mu}-1}F_{\mu},\mu\ge 0 .
\end{equation}

Lower bounds can be obtained from (\ref{otrobo}). In this case
$$
\Frac{B^{(2)}_\mu}{B^{(1)}_\mu}=1+c_\mu^{-1}>1
$$
and then the second bound will be sharper although the validity will be more restricted.
The first bound is
\begin{equation}
\label{eqiq1}
Q_{\mu}>c_{\mu}^{-1}F_\mu =F_{\mu -1},\mu\ge1
\end{equation}
and the second
\begin{equation}
\label{eqiq2}
Q_{\mu}>(1+c_{\mu -1}^{-1})F_{\mu-1},\mu\ge2
\end{equation}

If $xy\ge 1$ the validity of the last two inequalities can be extended by considering Corollary \ref{xyuno}. 
Indeed, using the recurrence (\ref{RecQ}), $Q_{\mu}(x,y)=F_{\mu-1}(x,y)+Q_{\mu-1}(x,y)>F_{\mu-1}(x,y)$ (which is
Eq. (\ref{eqiq1})) if
$\mu\ge -1$ and $Q_{\mu}(x,y)=F_{\mu-1}(x,y)+F_{\mu-2}(x,y)+Q_{\mu-2}(x,y)>F_{\mu-1}(x,y)+F_{\mu-2}(x,y)$ (Eq. 
(\ref{eqiq2})) if $\mu\ge 0$.

\subsubsection{Combining the bounds}

Some of the previous bounds had a limited range of validity, depending on the value of $c_\mu$ (smaller or larger than $1$); 
an explicit region of validity in terms of $x$, $y$ and $\mu$ can be given 
using Lemma \ref{coefi}. Combining the results for the $P$ and $Q$ functions we can write the following theorem summarizing such bounds.

\begin{theorem}
\label{mesbobo}
Let $F_{\mu}(x,y)$ be the probability density function such that $Q_{\mu+1}(x,y)=\int_y^{+\infty} F_{\mu}(x,t)dt$ and
$c_{\mu}(x,y)=F_{\mu}(x,y)/F_{\mu-1}(x,y)$ then the following bounds hold:

\begin{equation}
\label{mes1}
\mkern-37mu 1.\quad Q_{\mu}>1-\Frac{F_\mu}{1-c_{\mu+1}},\,\mu>0,\,y<x+\mu+1/2.
\end{equation}

\begin{equation}
\label{mes2}
\mkern-135mu 2.\quad Q_{\mu}<\Frac{F_\mu}{c_{\mu}-1},\, \mu\ge 0,\, y>x+\mu.
\end{equation}

\begin{equation}
\label{mes3}
\mkern-83mu 3.\quad Q_{\mu}<\Frac{F_{\mu-1}}{1-c_{\mu-1}^{-1}},\, \mu\ge 1,\, y>x+\mu -1.
\end{equation}

\begin{equation}
\label{mas1}
\,\mkern-20mu 4.\quad Q_\mu (x,y)<1-(1+c_{\mu +1}(x,y))F_{\mu}(x,y) ,\mu>0.
\end{equation}

\begin{equation}
\label{mas2}
 \mkern-172mu \mkern-18mu 5.\quad Q_\mu (x,y)>F_{\mu-1},\,\mu\ge 1.
\end{equation}

\begin{equation}
\label{mas3}
\mkern-112mu 6.\quad Q_\mu (x,y)>(1+c_{\mu -1}^{-1})F_{\mu -1},\mu\ge 2.
\end{equation}

\vspace*{0.5cm}
The bound (\ref{mes3}) is superior to (\ref{mes2}) when it holds. 

The bound (\ref{mas3}) is superior to (\ref{mas2}) when it holds. If $xy\ge 1$ (\ref{mas2}) 
also holds for $\mu\ge -1$ and (\ref{mas3}) for $\mu\ge 0$.
\end{theorem}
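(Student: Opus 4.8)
The plan is to assemble Theorem~\ref{mesbobo} almost entirely from the work already done in the preceding subsections; the theorem is really a consolidation of the individual bounds together with the validity ranges extracted from Lemma~\ref{coefi}. The six inequalities split into two families: numbers 2, 3, 5, 6 are statements directly about $Q_\mu$, while numbers 1 and 4 are obtained from the corresponding bounds on $P_\mu$ via the complementarity relation $Q_\mu = 1 - P_\mu$ in (\ref{eq:PQcompl}). So the first step is to translate the $P$-bounds. From (\ref{pp2}), $P_\mu < F_\mu/(1-c_{\mu+1})$ whenever $c_{\mu+1}<1$; subtracting from $1$ gives bound 1, $Q_\mu > 1 - F_\mu/(1-c_{\mu+1})$. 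Likewise the lower bound $P_\mu > (1+c_{\mu+1})F_\mu$ yields bound 4 after subtraction. The $Q$-bounds 2 and 3 are exactly (\ref{qq1}) (both displays carry that label in the excerpt), and bounds 5 and 6 are (\ref{eqiq1}) and (\ref{eqiq2}) respectively.

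The second step is to pin down the validity ranges. Each of the $c_\mu \gtrless 1$ conditions appearing in the source bounds must be converted into an explicit condition on $x,y,\mu$ using (\ref{desc}) in Lemma~\ref{coefi}. For bound 2 the requirement is $c_\mu > 1$, and by the first line of (\ref{desc}) this holds when $y > x+\mu$, $\mu\ge 0$. For bound 3 the requirement is $c_{\mu-1}>1$, which by the same line (with $\mu$ replaced by $\mu-1$) holds when $y>x+\mu-1$, $\mu\ge 1$. For bound 1 the requirement is $c_{\mu+1}<1$; using the sharper second line of (\ref{desc}) with $\mu$ replaced by $\mu+1$ (so we need $\mu+1\ge 1/2$, automatic since $\mu>0$) gives $y < x+\mu+1/2$, exactly as stated. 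Bounds 4, 5, 6 carry no $c_\mu$-sign restriction — bound 4 comes from a lower bound on $P_\mu$ that needed only $r_\mu<1$, and bounds 5, 6 come from applying the recurrence (\ref{RecQ}) directly with the positivity $F_\mu>0$ — so their validity is just the stated $\mu$-range together with the blanket assumption $x\ge 0$, $y>0$.

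The third step is the two comparison claims. For ``(\ref{mes3}) is superior to (\ref{mes2}) when it holds'': whenever (\ref{mes3}) applies we have $\mu\ge 1$ and $c_{\mu-1}>1$, hence $c_\mu>1$ too (since $c_\mu$ is decreasing in $\mu$... wait, it is decreasing, so $c_\mu < c_{\mu-1}$; but we still need $c_\mu>1$ — actually $y>x+\mu-1$ does not force $y>x+\mu$, so one must instead argue directly that the bound value $F_{\mu-1}/(1-c_{\mu-1}^{-1})$ is smaller than $F_\mu/(c_\mu-1)$). The clean way: this is precisely the $B^{(2)}/B^{(1)}<1$ computation already carried out in the $Q$-function subsection, where it is shown $B^{(2)}_\mu/B^{(1)}_\mu = (1-c_\mu^{-1})/(1-c_{\mu-1}^{-1})<1$, with $B^{(1)}_\mu = F_\mu/(c_\mu-1)$ giving (\ref{mes2}) and $B^{(2)}_\mu = r_{\mu-1}B^{(1)}_{\mu-1} = c_{\mu-1}\cdot F_{\mu-1}/(c_{\mu-1}-1) = F_{\mu-1}/(1-c_{\mu-1}^{-1})$ giving (\ref{mes3}); so whenever both bound-expressions are meaningful (i.e.\ both denominators positive, which is what their stated ranges ensure), (\ref{mes3}) is the tighter one. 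Symmetrically, ``(\ref{mas3}) is superior to (\ref{mas2})'' is the $B^{(2)}_\mu/B^{(1)}_\mu = 1+c_\mu^{-1}>1$ computation from the same subsection, recalling that for lower bounds the larger expression is the sharper one. Finally the last sentence — that $xy\ge 1$ extends (\ref{mas2}) to $\mu\ge -1$ and (\ref{mas3}) to $\mu\ge 0$ — is exactly the argument already given right after (\ref{eqiq2}): by Corollary~\ref{xyuno}, $xy\ge 1$ forces $Q_{\mu-1}(x,y)>0$ for $\mu-1\ge -2$ and $Q_{\mu-2}(x,y)>0$ for $\mu-2\ge -2$, and then telescoping the recurrence (\ref{RecQ}) once (resp.\ twice) gives $Q_\mu = F_{\mu-1}+Q_{\mu-1} > F_{\mu-1}$ (resp.\ $Q_\mu = F_{\mu-1}+F_{\mu-2}+Q_{\mu-2} > F_{\mu-1}+F_{\mu-2}$).

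\textbf{Main obstacle.} There is no deep obstacle — the theorem is a packaging result and every ingredient is already proved. The one point demanding care is bookkeeping the validity ranges: making sure that each translation of a ``$c_\mu\lessgtr 1$'' hypothesis into an $(x,y,\mu)$-region uses the correct line of (\ref{desc}) with the index shifted correctly (e.g.\ it is $c_{\mu+1}<1$, not $c_\mu<1$, that is needed for bound 1, so one applies the second line of (\ref{desc}) at $\mu+1$), and checking that the $\mu$-threshold conditions ($\mu\ge 1/2$ for the sharper line, $\mu\ge 1$ for $c_{\mu-1}$ to be well-behaved, etc.) are compatible with the stated ranges. I would also take care, in the two ``superior to'' claims, to state explicitly that the comparison is between the bound \emph{values} and holds on the intersection of the relevant validity regions, so that ``when it holds'' is unambiguous.
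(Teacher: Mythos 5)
Your proposal is correct and follows essentially the same route as the paper: Theorem~\ref{mesbobo} is indeed just a consolidation of (\ref{pp2}), the lower bound $P_\mu>(1+c_{\mu+1})F_\mu$, the two bounds (\ref{qq1}), (\ref{eqiq1})--(\ref{eqiq2}), with the $c_\mu\lessgtr 1$ hypotheses translated via (\ref{desc}) of Lemma~\ref{coefi}, the superiority claims being the $B^{(2)}/B^{(1)}$ ratio computations, and the $xy\ge 1$ extension being the argument after (\ref{eqiq2}) via Corollary~\ref{xyuno}. Your mid-proof hesitation on the first superiority claim is resolved exactly as the paper does (comparing the bound values directly, on the intersection of the validity regions), so no gap remains.
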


As we will see, (\ref{mes1}) and (\ref{mas1}) become sharp as we move away from the transition line $y=x+\mu$
with $y<x+\mu$ (when $P$ is smaller than $Q$), while the rest of bounds become sharper away from the transition line $y=x+\mu$
but with $y>x+\mu$ (when $Q$ is smaller than $P$).

We have only considered the most simple bounds involving two Bessel functions at most (or a Bessel function and a ratio of Bessel functions, which is easy to compute with a continued fraction), but improvements can be obtained
particularly in the case of the $P$-function, for which convergent sequences of bounds are available. For instance, considering
Eq. (\ref{cfP4}) we obtain the following bound
\begin{equation}
\label{betterlo}
P_{\mu}(x,y)<\left(1+\Frac{c_{\mu+1}}{1-c_{\mu+2}}\right)F_{\mu}(x,y),\, y<x+\mu+3/2,
\end{equation}
and it is easy to show that it is sharper than the first bound of Theorem \ref{mesbobo}.\footnote{Although three Bessel functions
appear in this bound, it is possible to write down the bound in terms of two functions by using the three-term recurrence relation for Bessel
functions}

In the next section we briefly describe some other types of convergent bounds for the $P$-function.

\subsubsection{Convergent sequences of bounds}

Considering (\ref{sumaPs}), sequences of bounds can be obtained as follows.
    
\begin{proposition}
\label{convergence}
If $B_{\mu}(x,y)$ is an upper (lower) bound for $P_{\mu}(x,y)$ then the following are upper (lower) bounds for
any $n$
\begin{equation}
\label{bobo}
B^{(n)}_{\mu}(x,y)=B_{\mu +n +1}(x,y)+e^{-x-y}\displaystyle\sum_{k=0}^{n}\left(\Frac{y}{x}\right)^{\frac{\mu+k}{2}} I_{\mu+k}(2\sqrt{xy}).
\end{equation}
If the bound $B_{\mu}(x,y)$ is such that $B_{\mu}(x,y)\rightarrow 0$ as $\mu\rightarrow +\infty$ then 
$\{B^{(n)}_{\mu}(x,y)\}$ is a sequence of upper (lower) bounds converging to $P_{\mu}(x,y)$ as $n\rightarrow \infty$.
\end{proposition}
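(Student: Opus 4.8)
The plan is to read off the identity~(\ref{sumaPs}) for $P_\mu$, namely
$$
P_{\mu}(x,y)=P_{\mu+n+1}(x,y)+\sum_{k=0}^{n}F_{\mu+k}(x,y),
$$
and simply replace the tail term $P_{\mu+n+1}(x,y)$ by a bound of the appropriate direction. First I would recall that $F_{\mu+k}(x,y)=\left(\frac{y}{x}\right)^{(\mu+k)/2}e^{-x-y}I_{\mu+k}(2\sqrt{xy})$, so that the finite sum in~(\ref{bobo}) is precisely $\sum_{k=0}^{n}F_{\mu+k}(x,y)$; hence $B^{(n)}_{\mu}(x,y)=B_{\mu+n+1}(x,y)+\sum_{k=0}^{n}F_{\mu+k}(x,y)$. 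Subtracting~(\ref{sumaPs}) gives
$$
B^{(n)}_{\mu}(x,y)-P_{\mu}(x,y)=B_{\mu+n+1}(x,y)-P_{\mu+n+1}(x,y),
$$
so if $B_{\nu}(x,y)\ge P_{\nu}(x,y)$ for all $\nu>0$ (in particular for $\nu=\mu+n+1$) then $B^{(n)}_{\mu}(x,y)\ge P_{\mu}(x,y)$, and likewise with both inequalities reversed in the lower-bound case. This establishes the first assertion for every $n\ge 0$.

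For the convergence claim, I would note that all the terms $F_{\mu+k}(x,y)$ are nonnegative (since $I_{\nu}(t)>0$ for $t\ge 0$, $\nu\ge -1$), so the series $\sum_{k\ge 0}F_{\mu+k}(x,y)$ converges to $P_{\mu}(x,y)$ by~(\ref{seriey}); in particular its tail $\sum_{k>n}F_{\mu+k}(x,y)=P_{\mu+n+1}(x,y)$ tends to $0$ as $n\to\infty$. Combined with the hypothesis $B_{\mu+n+1}(x,y)\to 0$, the displayed difference $B^{(n)}_{\mu}(x,y)-P_{\mu}(x,y)=B_{\mu+n+1}(x,y)-P_{\mu+n+1}(x,y)$ tends to $0$, which is exactly the asserted convergence of $\{B^{(n)}_{\mu}(x,y)\}$ to $P_{\mu}(x,y)$.

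There is essentially no obstacle here; the one point requiring a word of care is that the hypothesis "$B_{\mu}(x,y)$ is an upper (lower) bound for $P_{\mu}(x,y)$" must be read as holding for all admissible orders, so that it applies at the shifted order $\mu+n+1$ — this is the only thing used about $B$ beyond the limiting behavior. I would state this explicitly. A secondary routine check is that the recurrence~(\ref{RecQ}) and hence~(\ref{sumaPs}) is valid at order $\mu+n+1$, which holds since $\mu>0$ implies $\mu+n+1>0$. No further computation is needed.
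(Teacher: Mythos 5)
Your proposal is correct and follows exactly the route the paper intends: the identity (\ref{sumaPs}) (with summand $F_{\mu+k}$, correcting the paper's index typo) plus replacement of the tail $P_{\mu+n+1}$ by the bound at the shifted order, with convergence following from $P_{\mu+n+1}\rightarrow 0$ and the hypothesis $B_{\mu+n+1}\rightarrow 0$. The paper states the proposition without a written proof, and your argument, including the explicit remark that the bound must hold at all admissible orders, is precisely the justification it relies on.
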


We observe that the bounds (\ref{mes1}) and (\ref{mas1})  of Theorem \ref{mesbobo} will give 
convergent sequences when Proposition \ref{convergence} is considered.

On the other hand, we already gave convergent sequences of upper bounds for $P_{\mu +1}/P_{\mu }$ and taking
$r_\mu=B_{\mu}^{(n)}$ in (\ref{unoco}) with $B_{\mu}^{(n)}$ given by $u_{\mu}^{(n)}$ of Theorem \ref{cociente2} we have:

\begin{proposition}
\label{superior}
For $n=0,1,\ldots$ we have
$$
P_{\mu}(x,y)<U_{\mu}^{(n)}(x,y)=\Frac{\displaystyle\sum_{k=0}^n F_{\mu +k}}{F_\mu - F_{\mu+n+1}}F_{\mu}
$$
if $F_\mu(x,y)> F_{\mu+n+1}(x,y)$. The sequence of bounds is convergent as $n\rightarrow +\infty$.
\end{proposition}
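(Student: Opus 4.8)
The plan is to prove Proposition \ref{superior} by combining the convergent upper bounds $u_{\mu}^{(n)}(x,y)$ for the ratio $P_{\mu+1}/P_{\mu}$ from Theorem \ref{cociente2} with the mechanism of (\ref{unoco}). Recall that in (\ref{unoco}) we showed that if $P_{\mu+1}/P_{\mu} < r_{\mu}$ with $r_{\mu} < 1$, then $P_{\mu} < F_{\mu}/(1-r_{\mu})$. So the first step is simply to take $r_{\mu} = u_{\mu}^{(n)}(x,y)$, which by Theorem \ref{cociente2} is indeed an upper bound for $P_{\mu+1}/P_{\mu}$; the hypothesis $r_{\mu} < 1$ translates into $\sum_{k=0}^{n} F_{\mu+k+1} < \sum_{k=0}^{n} F_{\mu+k}$, i.e. $F_{\mu+n+1} < F_{\mu}$, which is exactly the stated hypothesis $F_{\mu}(x,y) > F_{\mu+n+1}(x,y)$.

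Next I would substitute into (\ref{unoco}). With $u_{\mu}^{(n)} = \bigl(\sum_{k=0}^n F_{\mu+k+1}\bigr)/\bigl(\sum_{k=0}^n F_{\mu+k}\bigr)$, the denominator $1 - r_{\mu}$ becomes
\begin{equation}
1 - u_{\mu}^{(n)} = \Frac{\displaystyle\sum_{k=0}^n F_{\mu+k} - \displaystyle\sum_{k=0}^n F_{\mu+k+1}}{\displaystyle\sum_{k=0}^n F_{\mu+k}} = \Frac{F_{\mu} - F_{\mu+n+1}}{\displaystyle\sum_{k=0}^n F_{\mu+k}},
\end{equation}
since the two sums telescope. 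Therefore $P_{\mu} < F_{\mu}/(1 - u_{\mu}^{(n)}) = \bigl(\sum_{k=0}^n F_{\mu+k}\bigr)F_{\mu}/(F_{\mu} - F_{\mu+n+1}) = U_{\mu}^{(n)}(x,y)$, which is the claimed bound. This is the core of the argument and it is essentially a one-line telescoping computation once the right $r_{\mu}$ is chosen.

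For convergence as $n \to +\infty$, I would argue that $F_{\mu+n+1}(x,y) \to 0$ (this follows, e.g., from (\ref{seriey}), which shows $\sum_k F_{\mu+k}$ converges, so its terms tend to zero), and that $\sum_{k=0}^n F_{\mu+k}(x,y) \to P_{\mu}(x,y)$ by (\ref{seriey}). Hence $U_{\mu}^{(n)}(x,y) \to P_{\mu}(x,y) \cdot F_{\mu}/F_{\mu} = P_{\mu}(x,y)$. Alternatively, and perhaps more cleanly, one can note that $u_{\mu}^{(n)} \to P_{\mu+1}/P_{\mu}$ by Theorem \ref{cociente2}, so $F_{\mu}/(1-u_{\mu}^{(n)}) \to F_{\mu}/(1 - P_{\mu+1}/P_{\mu}) = F_{\mu} P_{\mu}/(P_{\mu} - P_{\mu+1}) = F_{\mu} P_{\mu}/F_{\mu} = P_{\mu}$, using $P_{\mu} - P_{\mu+1} = F_{\mu}$ from (\ref{RecQ}). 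I do not anticipate a genuine obstacle here; the only point requiring a little care is checking that the hypothesis $F_{\mu} > F_{\mu+n+1}$ is exactly what makes the denominator positive (so the bound is meaningful and the direction of the inequality is preserved), and noting that for $n$ large this hypothesis is automatically satisfied since $F_{\mu+n+1} \to 0$ while $F_{\mu}$ is fixed and positive — so the sequence is eventually well-defined regardless of where one starts.
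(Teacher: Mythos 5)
Your proposal is correct and follows exactly the paper's route: the paper obtains Proposition \ref{superior} precisely by taking $r_{\mu}=u_{\mu}^{(n)}$ from Theorem \ref{cociente2} in the mechanism (\ref{unoco}), with the hypothesis $F_{\mu}>F_{\mu+n+1}$ being the condition $r_{\mu}<1$. Your explicit telescoping of $1-u_{\mu}^{(n)}$ and the convergence argument via $F_{\mu+n+1}\rightarrow 0$ and $\sum_{k=0}^{n}F_{\mu+k}\rightarrow P_{\mu}$ simply spell out details the paper leaves implicit.
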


 Using the bound of
Theorem \ref{superior} and applying Theorem (\ref{convergence}) for this bound and for $B_{\mu +n +1}(x,y)=0$ we have the
following result:
\begin{proposition}
$$
\sum_{k=0}^{n}F_{\mu +k} <P_{\mu}(x,y)<
\sum_{k=0}^{n} F_{\mu+k} +U_{\mu+n+1}^{(q)}(x,y),
$$
where the upper bound is valid provided that  $F_{\mu+n+1}(x,y)>F_{\mu+n+q+2}(x,y)$.
\end{proposition}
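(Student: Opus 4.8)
The plan is to combine Proposition~\ref{superior} (which gives the convergent upper bounds $U_\mu^{(q)}$) with Proposition~\ref{convergence} (which lifts any bound on $P_{\mu+n+1}$ to a bound on $P_\mu$ via the partial sum of $F$'s). First I would recall from (\ref{sumaPs}) the identity
\begin{equation}
P_{\mu}(x,y)=P_{\mu+n+1}(x,y)+\sum_{k=0}^{n}F_{\mu+k}(x,y),
\end{equation}
which is exact, not an inequality. The left inequality $\sum_{k=0}^{n}F_{\mu+k}<P_\mu$ is then immediate since $P_{\mu+n+1}(x,y)>0$ for $\mu+n+1>0$ (all terms $F_{\mu+k}$ are positive, as noted after (\ref{Fmu})). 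This is the easy half.

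For the upper bound, I would apply Proposition~\ref{superior} with $\mu$ replaced by $\mu+n+1$ and $n$ replaced by $q$: this yields
\begin{equation}
P_{\mu+n+1}(x,y)<U_{\mu+n+1}^{(q)}(x,y),
\end{equation}
valid precisely when $F_{\mu+n+1}(x,y)>F_{\mu+n+1+q+1}(x,y)=F_{\mu+n+q+2}(x,y)$, which is the stated hypothesis. Substituting this into the exact identity above gives
\begin{equation}
P_{\mu}(x,y)<\sum_{k=0}^{n}F_{\mu+k}(x,y)+U_{\mu+n+1}^{(q)}(x,y),
\end{equation}
which is the claimed upper bound. In the language of Proposition~\ref{convergence}, this is just $B_\mu^{(n)}$ with the seed bound $B_\mu=U_\mu^{(q)}$ for the upper side and the trivial seed $B_\mu=0$ implicitly giving the lower side.

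The only point requiring a little care is bookkeeping of the index shifts: one must check that the hypothesis of Proposition~\ref{superior} at level $\mu+n+1$ with parameter $q$ translates exactly to $F_{\mu+n+1}>F_{\mu+n+q+2}$, i.e. that the "$+n+1$" in $U_\mu^{(n)}$ becomes "$+q+1$" after the relabelling and then combines with the outer shift $n+1$ to give the offset $q+2$. I do not expect any genuine obstacle here — this proposition is a purely formal corollary obtained by feeding one already-proved bound into another. If one also wants the convergence claim (which the statement does not explicitly assert for this combined form but which follows), one notes that for fixed $q$ the term $U_{\mu+n+1}^{(q)}(x,y)\to 0$ as $n\to+\infty$ because $F_{\mu+n+1}\to 0$ and $U_{\mu+n+1}^{(q)}$ is a bounded multiple of $F_{\mu+n+1}$, so both bounds converge to $P_\mu(x,y)$.
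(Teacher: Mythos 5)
Your proof is correct and follows essentially the same route as the paper: the paper also obtains this proposition by feeding the bound $U_{\mu+n+1}^{(q)}$ of Proposition~\ref{superior} (and the trivial seed $B=0$ for the lower side) into the identity (\ref{sumaPs}) underlying Proposition~\ref{convergence}, with the same index bookkeeping $F_{\mu+n+1}>F_{\mu+n+q+2}$. No gaps.
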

The lower bound of this theorem can also be obtained taking
$r_\mu=B_{\mu}^{(n)}$ in (\ref{unoco}) with $B_{\mu}^{(n)}$ given by $l_{\mu}^{(n)}$ of Theorem \ref{cociente2}.

Other sequences of convergent bounds can be obtained from the expression of Marcum functions in
series of incomplete gamma functions (see, for instance, Eq. (7) of \cite{Gil:2014:COT}). Truncating the series we have

\begin{equation}\label{qratio}
e^{-x}\sum_{k=0}^{n} \frac{x^k}{k!}  P_{\mu +k}(y) <P_{\mu} (x,y)<1-e^{-x}\sum_{k=0}^{n} \frac{x^k}{k!}  Q_{\mu +k}(y)
\end{equation}
and these bounds also converge to $P_{\mu} (x,y)$ as $n\rightarrow +\infty$. In \cite{Paris:2013:SBF}, the bounds
for the particular case $n=2$ are analyzed. After using the first order non-homogeneous recurrence satisfied by the
incomplete gamma function ratios (Eq. (\ref{RecQ}) in the limit $x\rightarrow 0$)

\section{Central distributions}

\label{central}

Taking the limit $x\rightarrow 0$ we obtain as a consequence properties for the incomplete gamma function
ratios. Considering that $c_{\mu}(0^+,y)=y/\mu$ and $F_{\mu}(0^+,y)=y^{\mu}e^{-y}/\Gamma (\mu+1)$
a number of particular results for the incomplete gamma functions follow from the results for the
noncentral distribution.

For instance, from the monotonicity properties with respect to $\mu$ of Theorems \ref{cociP} and \ref{cococo}
we have, taking $a=\mu$ and $x=0$:
\begin{corollary}
The ratios 
$
\Frac{1}{a-1}\Frac{\gamma (a,y)}{\gamma (a-1,y)}$ and $\Frac{1}{a-1}\Frac{\Gamma (a,y)}{\Gamma (a-1,y)}
$
are decreasing as a function of $a>1$.
\end{corollary}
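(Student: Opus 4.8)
The plan is to derive this as a direct specialization of the monotonicity-in-$\mu$ statements already established in Theorems \ref{cociP} and \ref{cococo}, evaluated at $x=0$. Recall that those theorems assert that $h_\mu(x,y)=P_{\mu+1}(x,y)/P_\mu(x,y)$ and $H_\mu(x,y)=Q_{\mu+1}(x,y)/Q_\mu(x,y)$ are both decreasing functions of $\mu$. The first thing I would do is record the limiting identifications $P_\mu(0^+,y)=P(\mu,y)=\gamma(\mu,y)/\Gamma(\mu)$ and $Q_\mu(0^+,y)=Q(\mu,y)=\Gamma(\mu,y)/\Gamma(\mu)$, which are exactly the relations \eqref{eq:int02} with $a=\mu$; these are available from the particular values \eqref{qmval} together with the definition of the central case as $x\to 0$.

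Next I would substitute $x=0$ into the two ratios. For the $P$-function this gives
\[
h_\mu(0^+,y)=\frac{P_{\mu+1}(0^+,y)}{P_\mu(0^+,y)}=\frac{\gamma(\mu+1,y)/\Gamma(\mu+1)}{\gamma(\mu,y)/\Gamma(\mu)}=\frac{1}{\mu}\,\frac{\gamma(\mu+1,y)}{\gamma(\mu,y)},
\]
using $\Gamma(\mu+1)=\mu\Gamma(\mu)$, and similarly $H_\mu(0^+,y)=\tfrac{1}{\mu}\,\Gamma(\mu+1,y)/\Gamma(\mu,y)$. Setting $a=\mu+1$ (so $a-1=\mu$ and $a>1$ corresponds to $\mu>0$), these become $\frac{1}{a-1}\gamma(a,y)/\gamma(a-1,y)$ and $\frac{1}{a-1}\Gamma(a,y)/\Gamma(a-1,y)$ respectively. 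Since $h_\mu$ and $H_\mu$ are decreasing in $\mu$ for each fixed $x\ge 0$ and $y>0$, in particular they are decreasing in $\mu$ at $x=0^+$; rewriting "decreasing in $\mu$ for $\mu>0$" as "decreasing in $a$ for $a>1$" finishes the argument.

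The one point requiring a little care — and the only real obstacle — is the interchange of the two limits: Theorems \ref{cociP} and \ref{cococo} give monotonicity in $\mu$ for every fixed $x>0$, and I need it to persist at the boundary value $x=0^+$. I would handle this by invoking the continuity of $P_\mu(x,y)$ and $Q_\mu(x,y)$ in $x$ down to $x=0$ (for $\mu>0$; $Q_\mu$ is continuous for all real $\mu$, as noted in Section \ref{basic}), together with the explicit limits $P_\mu(0^+,y)=\gamma(\mu,y)/\Gamma(\mu)$ and $Q_\mu(0^+,y)=\Gamma(\mu,y)/\Gamma(\mu)$. Concretely, fix $1<a_1<a_2$; for every $x>0$ we have $h_{a_2-1}(x,y)\le h_{a_1-1}(x,y)$ by Theorem \ref{cociP}, and letting $x\to 0^+$ the continuity yields $h_{a_2-1}(0^+,y)\le h_{a_1-1}(0^+,y)$, which is the desired inequality after the substitution above; the $\Gamma$ case is identical using Theorem \ref{cococo}. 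This is essentially routine, so the corollary follows immediately.
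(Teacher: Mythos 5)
Your proposal is correct and follows essentially the same route as the paper: the corollary is obtained by specializing the monotonicity in $\mu$ of $P_{\mu+1}/P_\mu$ and $Q_{\mu+1}/Q_\mu$ (Theorems \ref{cociP} and \ref{cococo}) to the central case $x=0$, identifying $P_\mu(0,y)=\gamma(\mu,y)/\Gamma(\mu)$, $Q_\mu(0,y)=\Gamma(\mu,y)/\Gamma(\mu)$ and shifting $a=\mu+1$. Your extra care about passing the inequality to the limit $x\to 0^+$ is a harmless refinement; the paper treats $x=0$ as directly covered by those theorems.
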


We next find related functions with are increasing as a function of $a$. 
This will lead to Tur\'an-type inequalities which can be used for obtaining further bounds.

\begin{theorem}
\label{papapa}
The function
\begin{equation}
p_a (y)=\Frac{a}{a-1}\Frac{\gamma (a,y)}{\gamma (a-1,y)}
\end{equation}
is increasing as a function of $a$ for $a>1$, $y>0$.
\end{theorem}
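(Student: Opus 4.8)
The plan is to mimic the Riccati-equation strategy that was used repeatedly above (Lemma~\ref{coefi}, Theorems~\ref{cociP} and \ref{cococo}), now specialized to the central case $x=0$. Write $q_a(y)=\gamma(a,y)/\gamma(a-1,y)$, so that $p_a(y)=\frac{a}{a-1}q_a(y)$. From the non-homogeneous recurrence (\ref{RecQ}) in the limit $x\to 0^+$, namely $\gamma(a+1,y)=a\gamma(a,y)-y^{a}e^{-y}$ (equivalently $P_{a+1}(0,y)=P_a(0,y)-F_a(0^+,y)$ with $F_a(0^+,y)=y^a e^{-y}/\Gamma(a+1)$), together with $\partial_y\gamma(a,y)=y^{a-1}e^{-y}$, one derives a first-order ODE in $y$ for $q_a$. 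Concretely, differentiating $q_a=\gamma(a,y)/\gamma(a-1,y)$ and using the two displayed identities gives a Riccati equation of the shape $y\,q_a'(y)=(a-1)q_a(y)-\,y\,q_a(y)^2 + (\text{lower order})$; this is exactly the $x\to 0$ shadow of (\ref{ricH}) with $c_a(0^+,y)=y/a$. I would write this ODE out cleanly once, since everything downstream hangs on it.

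Next, following the footnote to Lemma~\ref{coefi}, I would prove monotonicity in $a$ by the ``crossing'' argument rather than by differentiating in $a$ directly. First, check the boundary behaviour as $y\to 0^+$: using $\gamma(a,y)\sim y^a/a$ one gets $p_a(0^+)=\lim_{y\to 0^+}\frac{a}{a-1}\cdot\frac{y^a/a}{y^{a-1}/(a-1)}=\lim_{y\to 0^+} y =0$ for every $a$, so the curves $y\mapsto p_a(y)$ all emanate from the origin; one then needs the next-order term to see that for small $y$, $p_a(y)$ is \emph{increasing} in $a$ (the correction term from the $e^{-y}$ expansion supplies the sign). Second, with $g(y)$ denoting a common value $p_a(y_c)=p_{a'}(y_c)$ at a hypothetical first crossing point $y_c$ (with $a<a'$), differentiate the Riccati equation holding the value of $p$ fixed: $\partial_a\big(y\,p_a'(y)\big)\big|_{p\text{ fixed}}$ should come out with a definite sign (as in the footnote, where the analogous quantity was $-\tfrac{2}{t}g_\nu(t)$). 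If that sign forces $p_{a'}'(y_c)>p_a'(y_c)$ while the crossing picture forces the reverse, we have the contradiction, hence no crossing, hence $p_a(y)$ is increasing in $a$ for all $y>0$.

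The main obstacle I anticipate is getting the \emph{correct} Riccati equation and, in particular, pinning down the sign of the inhomogeneous/lower-order term and of $\partial_a(y p_a')|_{p}$ — the factor $\tfrac{a}{a-1}$ (absent in the $P_{\mu+1}/P_\mu$ ratio, which had no such prefactor) is precisely what flips the monotonicity from decreasing (the Corollary just above) to increasing, so the bookkeeping of that prefactor inside the ODE is the delicate point; I would double-check it against the known limiting values $p_a(0^+)=0$ and $p_a(+\infty)=\lim \frac{a}{a-1}\cdot\frac{\Gamma(a)}{\Gamma(a-1)}\cdot\frac{1-Q(a,y)/\cdots}{\cdots}$, i.e. against $\gamma(a,y)\to\Gamma(a)$, which gives $p_a(+\infty)=\frac{a}{a-1}\cdot\frac{\Gamma(a)}{\Gamma(a-1)}=\frac{a}{a-1}(a-1)=a$, an increasing function of $a$ — a reassuring consistency check at the $y=\infty$ endpoint. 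An alternative, if the crossing argument proves fiddly, is the purely series-based route: write $p_a(y)$ using $\gamma(a,y)=y^a e^{-y}\sum_{k\ge 0} y^k/(a)_{k+1}$ (Pochhammer), so that $p_a(y)=\frac{a}{a-1}\cdot y\cdot\frac{\sum_{k\ge0} y^k/(a+1)_k \,/\,a}{\sum_{k\ge0} y^k/(a)_k\,/\,(a-1)}$ and reduce the claim to a termwise/log-convexity comparison of the coefficient sequences $1/(a)_k$ in $a$, much as (\ref{cociente}) reduced Theorem~\ref{cotaPse} to $c_\nu>c_{\nu+1}$; but I expect the Riccati approach to be shorter and more in keeping with the rest of the paper.
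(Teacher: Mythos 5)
Your outline follows the paper's own strategy step for step: the Riccati equation in $y$ for $p_a$, the ordering of the curves near $y=0$, and a no-crossing argument driven by the sign of $\partial_a p_a'$ at fixed $p_a$ (the analogue of the footnote to Lemma \ref{coefi}). But the step you yourself flag as ``the delicate point'' is exactly the step the proposal never supplies, and it does not come for free. Writing the equation out, as the paper does in (\ref{RiccatinR}),
\[
p_a'(y)=a-\left(1+\frac{a-1}{y}\right)p_a(y)+\frac{1}{y}\left(1-\frac{1}{a}\right)p_a(y)^2,
\]
one gets, holding $y$ and the value of $p_a$ fixed,
\[
\frac{\partial p_a'}{\partial a}=1-\frac{p_a}{y}+\frac{p_a^2}{y\,a^2},
\]
and this quantity is \emph{not} of definite sign by itself: viewed as a quadratic in $p_a$ it becomes negative for $p_a$ slightly larger than $y$ whenever $y<a^2/4$. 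The missing ingredient is the a priori bound $p_a(y)<y$, i.e. $\gamma(a,y)/\gamma(a-1,y)<\frac{a-1}{a}\,y$, which is Theorem \ref{cotaPse} specialized to $x\to 0^+$ (using $c_a(0^+,y)=y/a$); the paper invokes it as (\ref{previbo}), and with it $\partial_a p_a'>1-p_a/y>0$, so the hypothetical first crossing is contradicted and the argument closes. Without identifying this input the contradiction step cannot be completed, so the gap is genuine even though the architecture is the right one.

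Two smaller points. The sketched ``shape'' $y\,q_a'=(a-1)q_a-y\,q_a^2+\cdots$ has the wrong sign on the quadratic term: the correct equation for $q_a=\gamma(a,y)/\gamma(a-1,y)$, obtained from $\partial_y\gamma(a,y)=y^{a-1}e^{-y}$ and the recurrence $\gamma(a,y)=(a-1)\gamma(a-1,y)-y^{a-1}e^{-y}$, is $q_a'=\bigl((a-1)-q_a\bigr)\bigl(1-q_a/y\bigr)$, which after multiplying by $a/(a-1)$ gives (\ref{RiccatinR}); since every sign downstream hangs on these coefficients, this must indeed be done, as you say, ``cleanly once''. Likewise the near-zero ordering should be made explicit: the power series gives $p_a(y)=y-\frac{y^2}{a(a+1)}+{\cal O}(y^3)$, so the leading term is independent of $a$ and the ordering $p_{a+\epsilon}>p_a$ for small $y$ comes from the second term, as you anticipated. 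Your endpoint check $p_a(+\infty)=a$ is correct and a reasonable sanity check, but it plays no role in the proof.
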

{\it {\bf Proof}
Considering (\ref{ricH}) we obtain the following Riccati equation for $p_a (y)$,
where the derivative is taken with respect to $y$
\begin{equation}
\label{RiccatinR}
p_a^{\prime}(y)=a-\left(1+\frac{a-1}{y}\right) p_a (y) +\frac{1}{y}\left(1-\frac{1}{a}\right)p_a^2 (y).
\end{equation}

Using the power series \cite[Eq. 8.7.1]{Paris:2010:IGR} we have that
\begin{equation}
p_a (y)=y-\Frac{1}{a(a+1)}y^2 +{\cal O}(y^3),
\end{equation}
therefore we have $p_{a+\epsilon}(y)>p_{a}(y)$ for $y$ sufficiently close to $y=0$; but then it is easy
to see that this must hold for any $y>0$. Too see this, let us assume $p_{a+\epsilon}(y_e)=p_{a}(y_e)$ and 
$p_{a+\epsilon}(y)>p_{a}(y)$ for $y\in (0,y_e)$ and we will arrive at a contradiction. Indeed, this immediately 
implies that $p_{a+\epsilon}^{\prime}(y_e)<p_{a}^{\prime}(y_e)$ and with $p_{a+\epsilon}(y_e)=p_{a}(y_e)$  but this is 
in contradiction with the equation (\ref{RiccatinR}), because for a same value of $p_{a}$ (fixed) the derivative $p_a^{\prime}$
increases with $a$. This can be checked by taking the partial derivative of $p_{a}'$ with respect to $a$ with $y$ and $p_a$ fixed:
$$
\Frac{\partial p_a^{\prime}}{\partial a}=1-\Frac{1}{y}p_a+\Frac{1}{y a^2}p_a^2>1-\Frac{1}{y}p_a>0,
$$
where the last inequality is true because 
\begin{equation}
\label{previbo}
\Frac{\gamma (a,y)}{\gamma (a-1,y)}<\left(\Frac{a-1}{a}\right)y,\,a>1,y>0,
\end{equation}
which is a consequence of Theorem \ref{cotaPse} (taking $x\rightarrow 0$).
}

\begin{remark}
In \cite{Qi:2002:MRI} it was proved that $\gamma (a,y)/\gamma (a-1,y)$ is increasing as a function of $a$. The 
result of Theorem \ref{papapa} is an improvement.
\end{remark}

This monotonicity property can be used to obtain a Tur\'an type inequality. We combine this with the inequality of
Corollary \ref{TurP} and we get:

\begin{corollary}
\label{Turga0}
\begin{equation}
1-\Frac{1}{a}<\Frac{\gamma (a,y)^2}{\gamma(a+1,y)\gamma (a-1,y)}<1-\Frac{1}{a^2},\, a>1.
\end{equation}
\end{corollary}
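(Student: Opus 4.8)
The plan is to derive the double inequality in Corollary \ref{Turga0} by combining two Turán-type inequalities that are already available (or immediately available) from the preceding results, one giving the lower bound and one giving the upper bound.

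\textbf{Lower bound.} For the lower bound $1-\frac{1}{a}<\frac{\gamma(a,y)^2}{\gamma(a+1,y)\gamma(a-1,y)}$, I would use the monotonicity of $p_a(y)=\frac{a}{a-1}\frac{\gamma(a,y)}{\gamma(a-1,y)}$ in $a$ from Theorem \ref{papapa}. Writing $p_{a+1}(y)>p_a(y)$ explicitly gives
$$
\Frac{a+1}{a}\Frac{\gamma(a+1,y)}{\gamma(a,y)}>\Frac{a}{a-1}\Frac{\gamma(a,y)}{\gamma(a-1,y)},
$$
which, after cross-multiplying (all quantities positive for $y>0$, $a>1$), rearranges to
$$
\Frac{\gamma(a,y)^2}{\gamma(a+1,y)\gamma(a-1,y)}>\Frac{(a-1)(a+1)}{a^2}=1-\Frac{1}{a^2}.
$$
Wait — this actually produces the \emph{upper} constant as a lower bound, so one must be careful about the direction; the correct reading is that $p_{a+1}>p_a$ yields a lower bound with constant $1-1/a^2$, which is stronger than $1-1/a$ and hence also implies the stated lower bound $1-1/a$. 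So in fact the lower inequality in the corollary follows a fortiori; I would state it via $1-1/a^2>1-1/a$ for $a>1$, or alternatively note that $1-1/a$ already follows from the weaker monotonicity of $\gamma(a,y)/\gamma(a-1,y)$ in $a$ (Corollary just above, citing \cite{Qi:2002:MRI}) combined with the trivial $\frac{a}{a-1}\cdot\frac{a}{a+1}<1$.

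\textbf{Upper bound.} For $\frac{\gamma(a,y)^2}{\gamma(a+1,y)\gamma(a-1,y)}<1-\frac{1}{a^2}$, I would invoke Corollary \ref{TurP} with $x\to 0$, $\mu=a-1$: that gives $P_a(0,y)^2-P_{a-1}(0,y)P_{a+1}(0,y)>0$, i.e. $\gamma(a,y)^2/\Gamma(a)^2 > \gamma(a-1,y)\gamma(a+1,y)/(\Gamma(a-1)\Gamma(a+1))$. Since $\Gamma(a+1)=a\Gamma(a)$ and $\Gamma(a-1)=\Gamma(a)/(a-1)$, the gamma-factor ratio is $\Gamma(a)^2 \big/ \big(\frac{\Gamma(a)}{a-1}\cdot a\Gamma(a)\big) = \frac{a-1}{a}$, so the inequality becomes $\gamma(a,y)^2 > \frac{a-1}{a}\gamma(a-1,y)\gamma(a+1,y)$, that is $\frac{\gamma(a,y)^2}{\gamma(a-1,y)\gamma(a+1,y)}>\frac{a-1}{a}=1-\frac1a$; this again is the lower side. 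So it is the monotonicity of $p_a$ (Theorem \ref{papapa}) that supplies the sharp constant $1-1/a^2$ as a \emph{lower} bound, and the Turán inequality of Corollary \ref{TurP} that must be read for the reverse ratio to get an upper bound of the form $1-1/a$. In other words: Corollary \ref{TurP} gives the crude lower bound $1-1/a$; Theorem \ref{papapa} sharpens it to $1-1/a^2$; and the \emph{upper} bound of the corollary as written must instead come from a reversed Turán inequality — namely the monotonicity in the \emph{opposite} direction of some companion ratio. The cleanest route is: the upper bound $\frac{\gamma(a,y)^2}{\gamma(a+1,y)\gamma(a-1,y)}<1-\frac1{a^2}$ is exactly $p_{a+1}(y)>p_a(y)$ rewritten, i.e. it \emph{is} Theorem \ref{papapa}; and the lower bound $1-\frac1a$ is Corollary \ref{TurP} rewritten as above.

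\textbf{Main obstacle and cleanest presentation.} The only real work is bookkeeping with the $\Gamma$-factors and keeping the inequality directions straight when translating between the $P_\mu$-Turán inequality, the ratio $p_a$, and the symmetric quotient $\gamma(a,y)^2/(\gamma(a+1,y)\gamma(a-1,y))$; there is no new analytic content. I would therefore write the proof as two short displayed computations: first, expand Corollary \ref{TurP} at $x=0$ and simplify the $\Gamma$-ratios to obtain the lower bound $1-1/a$; second, expand the inequality $p_{a+1}(y)>p_a(y)$ from Theorem \ref{papapa}, clear denominators, and simplify $(a-1)(a+1)/a^2=1-1/a^2$ to obtain the upper bound. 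Both hold for $a>1$, $y>0$, which matches the hypotheses of the two cited results, completing the proof.
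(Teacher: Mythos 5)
Your final assignment of the two halves is correct and is exactly the paper's (one‑line) proof: the lower bound $1-\frac{1}{a}$ comes from Corollary \ref{TurP} at $x=0$, since $P_{a}(0,y)^2>P_{a-1}(0,y)P_{a+1}(0,y)$ with $P_a(0,y)=\gamma(a,y)/\Gamma(a)$ and $\Gamma(a)^2/\bigl(\Gamma(a-1)\Gamma(a+1)\bigr)=(a-1)/a$, while the upper bound $1-\frac{1}{a^2}$ is precisely $p_{a+1}(y)>p_a(y)$ (Theorem \ref{papapa}) after clearing denominators, using $(a-1)(a+1)/a^2=1-1/a^2$. However, the intermediate discussion contains reversed‑direction claims that are false and must be deleted before writing this up: $p_{a+1}>p_a$ does \emph{not} yield $\gamma(a,y)^2/\bigl(\gamma(a+1,y)\gamma(a-1,y)\bigr)>1-1/a^2$ (that would contradict the corollary's strict upper bound), it yields the $<$ inequality; likewise Corollary \ref{TurP} supplies only the lower bound $1-1/a$, not an upper bound, and Theorem \ref{papapa} does not ``sharpen the lower bound to $1-1/a^2$.'' Keeping only the two displayed computations of your closing paragraph gives a correct proof that coincides with the paper's.
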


The  inequalities in Corollary \ref{Turga0} are not new, see \cite{Merkle:1993:SIF}.
The right inequality can be used to get an upper bound, as was done in \cite{Merkle:1993:SIF}. We do this in a
slightly different way and get an improvement; similar analysis for the lower bound gives Theorem \ref{cotaPse} for $x=0$.

We consider the upper bound in Corollary \ref{Turga0} and eliminate $\gamma (a+1,y)$ 
by using the recurrence \cite[Eq. 8.8.2]{Paris:2010:IGR}
\begin{equation}
\label{pa2}
\gamma (a+1,y)-(a+y)\gamma (a,y) +(a-1) y \gamma (a-1,y)=0.
\end{equation}
This gives
\begin{equation}
h_a (y)^2-b_a(a+y) h_a (y) +b_a(a-1)y<0
\end{equation}
where 
\begin{equation}
h_a (y)=\gamma(a,y)/\gamma (a-1,y),\,b_a=1-\Frac{1}{a^2}.
\end{equation}
Two bounds are obtained from this inequality:
\begin{equation}
h_a(y)<\Frac{b_a}{2}(y+a+\sqrt{(y-a)^2+4ay/(a+1)})
\end{equation}
and
\begin{equation}
\label{eqq2}
h_a (y)>\Frac{2(a-1)y}{y+a+\sqrt{(y-a)^2+4a y/(a+1)}}=L_h (a,y).
\end{equation}

We proceed with the last inequality. Using
\begin{equation}
\gamma (a+1,y)=a\gamma (a,y)-y^a e^{-y}
\end{equation}
and using that $\gamma (a+1,y)>L_h (a,y) \gamma (a,y)$ for $a>0$
we get
\begin{equation}
\label{unacota}
\begin{array}{l}
\gamma (a,y)>b_2 (a)=\Frac{1}{2r_a a}y^a e^{-y}\left[2r_a+L_a+\sqrt{L_a^2+4r_ay}\right],\\
\\
r=(a+1)/(a+2),\,L=y-a-1,\,a>0.
\end{array}
\end{equation}

Using the first order inhomogeneous relation, Merkle \cite[Eq. (28)]{Merkle:1993:SIF} 
obtained a related bound, which can be written as
\begin{equation}
\gamma (a,y)>b_1 (a)=\Frac{1}{2r_{a-1}}y^{a-1} e^{-y}\left[L_{a-1}+\sqrt{L_{a-1}^2+4r_{a-1}y}\right],\,a>1.
\end{equation}

Both bounds are related by $b_2 (a)=(y^a e^{-y} + b_1 (a+1))/a$; in other words the bound (\ref{unacota}) can be
obtained considering Merkle's bound together with the relation 
\begin{equation}
\label{inhoga}
\gamma (a,y)=\Frac{1}{a}\left[y^a e^{-y}+\gamma (a+1,y)\right],
\end{equation}
$b_2$
turns out to be a sharper bound that Merkle's bound. 

It is clear that if in (\ref{inhoga}) $\gamma (a+1,y)$ is substituted by a lower (upper) bound, this gives a
lower (upper) bound for $\gamma (a,y)$. We can consider additional iterations with this inhomogeneous
recurrence. After $n$ applications of the backward recurrence we arrive to a result corresponding to Theorem \ref{convergence}
in the limit $x\rightarrow 0$.

Regarding $H_a(y)$, it is known that this ratio is monotonically increasing as a function of $a>1$, as
was proved in \cite{Qi:2002:MRI}; it is also monotonic as a 
function of $y$ (see Theorem \ref{cococo}, 
which also proves that $H_a (y)/(a-1)$ is decreasing as function of $a>1$). We give an alternative (and
elementary) proof of the fact that $H_a(y)$ is increasing and that it is in fact increasing for all real $a$ and not
only for $a>1$. First we prove the following:
\begin{proposition}
$\Frac{\Gamma (a,y)}{\Gamma (a-1,y)}>y$ for all $a\in {\mathbb R}$ and $y>0$.
\end{proposition}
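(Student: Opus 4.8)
The plan is to establish $\Gamma(a,y)/\Gamma(a-1,y) > y$ for all real $a$ and $y>0$ by the same differential-inequality technique used repeatedly in the paper, only now for the upper incomplete gamma function. Write $R_a(y) = \Gamma(a,y)/\Gamma(a-1,y)$. Differentiating in $y$, and using $\partial_y\Gamma(a,y) = -y^{a-1}e^{-y}$ together with the recurrence $\Gamma(a,y) = (a-1)\Gamma(a-1,y) + y^{a-1}e^{-y}$ (equivalently Eq. (\ref{RecQ}) in the limit $x\to 0$, which holds for all real $a$ for the $Q$/upper incomplete gamma), one obtains a Riccati-type equation
\begin{equation}
y\,R_a'(y) = R_a(y)^2 - (a-1+y)R_a(y) + (a-1)y,
\end{equation}
so the two "characteristic roots" of the right-hand side (where $R_a' = 0$) are $R_a = y$ and $R_a = a-1$. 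The strategy is exactly the barrier argument from the proof of Theorem \ref{cococo}: show that the graph of $R_a(y)$ lies in the region $R_a(y) > \max\{y, a-1\}$ (when this makes sense) or, more directly, that $R_a(y) > y$ cannot be violated.

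First I would nail down the boundary behaviour. As $y\to +\infty$, the standard asymptotics $\Gamma(a,y)\sim y^{a-1}e^{-y}$ give $R_a(y)\sim y$, but more precisely $R_a(y) - y \to $ a positive constant (indeed $\Gamma(a,y) = y^{a-1}e^{-y}(1 + (a-1)/y + \cdots)$, so $R_a(y) = y + (a-1) + o(1)$, and one checks the correction keeps $R_a(y) > y$ for large $y$). As $y\to 0^+$: if $a>1$ both $\Gamma(a,y),\Gamma(a-1,y)\to\Gamma(a),\Gamma(a-1)$, so $R_a(0^+) = a-1 > 0$, and if $a\le 1$ one uses $\Gamma(a-1,y)\sim y^{a-1}/|a-1|$-type behaviour (or the recurrence) to see $R_a(y)\to$ a nonnegative limit; in all cases $R_a(0^+)\ge 0$. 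Then I would argue by contradiction in the familiar way: suppose $y_c$ is the smallest point with $R_a(y_c) = y_c$. For $y$ slightly less than $y_c$ we'd need $R_a(y) > y$ (using the boundary data and minimality of $y_c$), hence $R_a'(y_c) \le 1$. But the Riccati equation forces, at a point where $R_a = y$, that $y\,R_a'(y) = y^2 - (a-1+y)y + (a-1)y = 0$, i.e. $R_a'(y_c) = 0 < 1$ — and then near $y_c$ the sign of $y R_a' = (R_a - y)(R_a - (a-1))$ shows the graph is pushed away from the line $R_a = y$, giving the contradiction that $R_a$ stays pinned at or below $y$ while the asymptotics say $R_a(y) - y$ is eventually positive. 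The cleanest packaging is: on the set where $R_a(y) < y$ one would have $R_a' $ of a definite sign driving $R_a$ further from $y$, contradicting $R_a(y) - y \to a-1 + o(1)$ with the large-$y$ estimate $R_a(y) > y$.

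The main obstacle is handling the region where $a-1$ and $y$ cross and, for $a\le 1$, making the boundary analysis at $y\to 0^+$ rigorous, since then $\Gamma(a-1,y)$ may blow up and the naive limit of the ratio needs the recurrence $R_a(y) = (a-1) + y^{a-1}e^{-y}/\Gamma(a-1,y)$ to see it is a well-defined nonnegative quantity. Once the sign of $y R_a'(y) = (R_a(y)-y)(R_a(y)-(a-1))$ is exploited as a two-barrier phase-plane argument — precisely as in Theorem \ref{cococo}, where the two curves $H_\mu = 1$ and $H_\mu = c_\mu$ divided the plane into four zones — the conclusion $R_a(y) > y$ follows on all of $y>0$ for every real $a$. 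This then feeds directly into the monotonicity of $H_a(y)$ in $a$ via the corresponding Riccati equation, $\partial_a(H_a') = -\,(\text{positive})\cdot(\text{something involving } R_a(y) - y)$, giving the promised strengthening of \cite{Qi:2002:MRI} to all real $a$.
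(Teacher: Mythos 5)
Your Riccati set-up is sound: the equation $y\,R_a'(y)=(R_a(y)-y)\,(R_a(y)-(a-1))$ for $R_a(y)=\Gamma(a,y)/\Gamma(a-1,y)$ is correct for all real $a$, and a barrier argument in the spirit of Theorem \ref{cococo} can indeed prove the proposition. However, the step your argument actually hinges on -- the large-$y$ behaviour -- is miscomputed, and the error is not harmless. From $\Gamma(a,y)=y^{a-1}e^{-y}\bigl(1+(a-1)/y+O(y^{-2})\bigr)$ one gets
$R_a(y)=y\,\bigl(1+(a-1)/y+O(y^{-2})\bigr)\bigl(1+(a-2)/y+O(y^{-2})\bigr)^{-1}=y+1+O(y^{-1})$,
not $y+(a-1)+o(1)$ as you claim. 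With your constant $a-1$, the crucial endpoint statement that $R_a(y)>y$ for large $y$ is unsupported (indeed contradicted by your own formula) precisely when $a\le 1$, which is the whole point of the proposition, since for $a>1$ the statement is essentially classical. The repair is the correct constant $1$, most easily seen from the recurrence form $R_a(y)=(a-1)+y^{a-1}e^{-y}/\Gamma(a-1,y)$ together with $\Gamma(a-1,y)=y^{a-2}e^{-y}\bigl(1+(a-2)/y+O(y^{-2})\bigr)$. I would also tighten the contradiction step: in the region $R_a<y$ the sign of $R_a'$ is not of one definite sign (it depends on the position relative to the second root $a-1$); the clean observation is that at any crossing $R_a(y_c)=y_c$ the Riccati equation forces $R_a'(y_c)=0<1$, so every crossing of the line is strictly downward, and hence if $R_a\le y$ ever occurred then $R_a(y)<y$ for all larger $y$, contradicting $R_a(y)-y\to 1>0$. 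With that observation the $y\to 0^+$ analysis you worry about for $a\le 1$ becomes unnecessary.

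For comparison, the paper's proof avoids the ratio altogether: with $G(y)=\Gamma(a,y)-y\,\Gamma(a-1,y)$ one has $G'(y)=-\Gamma(a-1,y)<0$ and, by the same asymptotic expansion, $G(y)=y^{a-2}e^{-y}\bigl(1+O(y^{-1})\bigr)>0$ for large $y$, so $G>0$ on all of $(0,\infty)$. So even after repairing the asymptotics, your phase-plane route is considerably heavier than the one-line monotonicity argument, though it does connect naturally with the Riccati machinery used elsewhere in the paper (e.g. in Theorem \ref{Gainc}).
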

\begin{proof}
Let $G(y)=\Gamma (a,y)-y\Gamma (a-1,y)$, then $G'(y)=-\Gamma (a-1,y)<0$ and using the asymptotic
expansion \cite[Eq. 8.11.2]{Paris:2010:IGR} we see that, for large $y$, $G(y)=y^{a-2}e^{-y}(1+{\cal O}(y^{-1}))$.
Therefore $G(y)>0$ for sufficiently large $y$ and $G'(y)<0$ for all $y>0$, which proves that $G(y)>0$ for all $y>0$.
\end{proof}

\begin{theorem}
\label{Gainc}
$H_a(y)=\Gamma (a,y)/\Gamma (a-1,y)$ is increasing as a function of $a\in {\mathbb R}$.
\end{theorem}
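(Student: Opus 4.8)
The plan is to mimic the Riccati-equation argument used for $p_a(y)$ in Theorem \ref{papapa}, but now for the $\Gamma$-ratio. Writing $H_a(y)=\Gamma(a,y)/\Gamma(a-1,y)$, I would first record the first-order differential equation satisfied by $H_a$ with respect to $y$. From the specialization of (\ref{ricHQ}) to $x\to 0$ (equivalently, differentiating $\Gamma(a,y)=\int_y^\infty t^{a-1}e^{-t}\,dt$ directly and using the recurrence $\Gamma(a+1,y)=a\,\Gamma(a,y)+y^a e^{-y}$), one gets a Riccati equation of the form
\begin{equation}
H_a'(y)=-1+\Bigl(1+\frac{a-1}{y}\Bigr)H_a(y)-\frac{1}{y}H_a(y)^2 .
\end{equation}
(This is the $x\to 0$ limit of (\ref{ricHQ}) with $c_{a-1}(0^+,y)=y/(a-1)$, rescaled; I would double-check the exact coefficients against the direct computation.)

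Next I would compare two solutions $H_{a+\epsilon}$ and $H_a$ by the same contradiction device as in Theorem \ref{papapa}: if the desired monotonicity fails at some $y_e>0$, take the first such crossing point, so $H_{a+\epsilon}(y_e)=H_a(y_e)$ while $H_{a+\epsilon}(y)>H_a(y)$ for $y<y_e$ (or the reversed configuration near one endpoint), forcing $H_{a+\epsilon}'(y_e)\le H_a'(y_e)$. But differentiating the Riccati equation with $y$ and $H_a$ held fixed gives
\begin{equation}
\frac{\partial H_a'}{\partial a}=\frac{1}{y}H_a(y)>0 ,
\end{equation}
so the derivative $H_a'$ is strictly increasing in $a$ at a fixed value of $H_a$, contradicting $H_{a+\epsilon}'(y_e)\le H_a'(y_e)$. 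The sign of $\partial H_a'/\partial a$ here is unconditionally positive (just $H_a/y>0$), which is exactly why the result extends to all real $a$ rather than only $a>1$ — there is no auxiliary inequality like (\ref{previbo}) needed to pin down the sign.

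The one genuine gap is the anchoring step: the contradiction argument only shows that the two curves cannot cross, so I must establish the correct ordering on at least one side. For $y\to+\infty$ the asymptotic expansion \cite[Eq. 8.11.2]{Paris:2010:IGR} gives $\Gamma(a,y)\sim y^{a-1}e^{-y}$, hence $H_a(y)=y(1+(a-1)/y+\cdots)$ and $H_{a+\epsilon}(y)-H_a(y)\to\epsilon>0$ for large $y$; combined with the no-crossing property this yields $H_{a+\epsilon}(y)>H_a(y)$ for all $y>0$. The main obstacle, then, is mostly bookkeeping — getting the coefficients in the Riccati equation exactly right and handling the endpoint comparison cleanly — rather than any conceptual difficulty; the preceding Proposition ($\Gamma(a,y)/\Gamma(a-1,y)>y$) is available if one needs a cruder a priori bound to control $H_a$ during the argument. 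I expect the whole proof to run in under half a page.
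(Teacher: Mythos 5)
Your overall strategy (a Riccati equation in $y$ plus a no-crossing comparison anchored at $y\to+\infty$) is the same as the paper's, but the execution contains a sign error that breaks the key step. The correct Riccati equation, obtained from $\frac{d}{dy}\Gamma(a,y)=-y^{a-1}e^{-y}$ and the recurrence $\Gamma(a,y)=(a-1)\Gamma(a-1,y)+y^{a-1}e^{-y}$, is
\[
H_a'(y)=(a-1)-\left(1+\frac{a-1}{y}\right)H_a(y)+\frac{1}{y}H_a(y)^2 ,
\]
i.e.\ the quadratic term enters with a plus sign and the constant term is $a-1$; your version already fails the large-$y$ check, since $H_a(y)=y+1+{\cal O}(1/y)$ and $H_a'(y)\to 1$. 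Consequently, at a fixed value of $H_a$ one gets $\partial H_a'/\partial a = 1-\frac{1}{y}H_a(y)$, not $+\frac{1}{y}H_a(y)$, and this quantity is negative only because of the a priori bound $H_a(y)>y$ --- precisely the Proposition you propose to dispense with. So the claim that no auxiliary inequality is needed is wrong: that inequality is exactly what pins down the sign, and it is why the paper states it (for all real $a$) immediately before the theorem.

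The sign must also be matched with the endpoint at which the comparison is anchored, and your combination is inconsistent. A positive $\partial H_a'/\partial a$ goes with the configuration of Theorem \ref{papapa}: ordering known near the left endpoint and a \emph{first} crossing $y_e$ with $H_{a+\epsilon}>H_a$ on $(0,y_e)$, forcing $H_{a+\epsilon}'(y_e)\le H_a'(y_e)$. Your ordering, however, is established at $y\to+\infty$ (correctly in substance, although the $a$-dependence sits at order $1/y$, so $H_{a+\epsilon}(y)-H_a(y)\sim \epsilon/y\to 0^{+}$ rather than $\to\epsilon$), so you must instead take the \emph{last} crossing point $y_e$, where $H_{a+\epsilon}=H_a$ and $H_{a+\epsilon}>H_a$ on $(y_e,\infty)$, which gives $H_{a+\epsilon}'(y_e)\ge H_a'(y_e)$; the contradiction then requires $\partial H_a'/\partial a<0$, which is what the correct Riccati equation together with $H_a(y)>y$ delivers. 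As written --- positive sign together with anchoring at $+\infty$ --- both inequalities point the same way and no contradiction arises; repairing the proof means recomputing the Riccati equation and reinstating the Proposition, after which your argument coincides with the paper's.
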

\begin{proof}
First we notice that for large enough $y$, $H_{a+\epsilon}(y)>H_a (y)$; indeed, using the asymptotic
expansion \cite[Eq. 8.11.2]{Paris:2010:IGR}  we see that
$$
\Frac{\partial H_a (y)}{\partial a}=\Frac{1}{y}(1+{\cal O}(y^{-1})).
$$

Considering the Riccati equation satisfied by $H_a (y)$
\begin{equation}
H_a^{\prime} (y)=(a-1)-\left(1+\frac{a-1}{y}\right) H_a (y) +\frac{1}{y}H_a^2 (y)
\end{equation}
where the derivative is taken with respect to $y$. Then, for a fixed value of $H_a (y)$ (not
depending on $a$)
\begin{equation}
\Frac{\partial H_a^{\prime} (y)}{\partial a}=1-\frac{1}{y} H_a (y)<0,
\end{equation}
where in the last inequality we have used that $H_a (y)>y$ (see Theorem (\ref{Gainc})). Now, because  
$H_{a+\epsilon}(y_+)>H_a (y_+)$ for large enough $y_+$, using a similar reasoning as in the proof
of Theorem \ref{papapa} if follows that $H_{a+\epsilon}(y)>H_a (y)$
for all positive $y$, which
completes the proof.
\end{proof}

\begin{corollary}
\label{Turga}
\begin{equation}
\begin{array}{l}
\Frac{\Gamma (a,y)^2}{\Gamma(a+1,y)\Gamma (a-1,y)}<1, a\in {\mathbb R},\\
\Frac{a-1}{a}<\Frac{\Gamma (a,y)^2}{\Gamma(a+1,y)\Gamma (a-1,y)}, a>1.
\end{array}
\end{equation}
\end{corollary}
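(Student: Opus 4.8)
The plan is to derive the Tur\'an-type inequalities of Corollary \ref{Turga} directly from the monotonicity of $H_a(y)=\Gamma(a,y)/\Gamma(a-1,y)$ with respect to $a$ established in Theorem \ref{Gainc}, exactly as was done for the $\gamma$-function in Corollary \ref{Turga0} via Corollary \ref{TurP}. The key observation is that $H_{a+1}(y)/H_a(y) = \Gamma(a+1,y)\Gamma(a-1,y)/\Gamma(a,y)^2$ is precisely the reciprocal of the quantity appearing in the corollary.

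First I would note that the already-proven monotonicity of $Q_\mu(x,y)$-ratios with respect to $\mu$ (Theorem \ref{cococo}), specialized to $x\to 0$ with $a=\mu$, gives the Tur\'an inequality $Q_a(y)^2 - Q_{a-1}(y)Q_{a+1}(y) > 0$ for $a>0$ (this is the ``same inequality'' remarked after Corollary \ref{TurP}). Dividing through by $\Gamma(a)^2$ and using $Q_a(y)=\Gamma(a,y)/\Gamma(a)$ together with $\Gamma(a-1)\Gamma(a+1)/\Gamma(a)^2 = a/(a-1)$ yields
\begin{equation}
\Frac{\Gamma(a,y)^2}{\Gamma(a+1,y)\Gamma(a-1,y)} < \Frac{a}{a-1}\cdot\Frac{Q_{a-1}(y)Q_{a+1}(y)}{Q_a(y)^2}\cdot\Frac{a-1}{a} \cdot\Frac{a}{a-1},
\end{equation}
which after simplification gives the lower bound $\Frac{a-1}{a} < \Frac{\Gamma(a,y)^2}{\Gamma(a+1,y)\Gamma(a-1,y)}$ valid for $a>1$ (one needs $a>1$ so that $\Gamma(a-1)$ is defined and the Gamma-ratio identity is usable). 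Actually the cleaner route: since $H_a(y)=(a-1)Q_a(y)/Q_{a-1}(y)$ is increasing in $a$ by Theorem \ref{Gainc}, we have $H_{a+1}(y)>H_a(y)$, i.e. $a Q_{a+1}(y)/Q_a(y) > (a-1)Q_a(y)/Q_{a-1}(y)$, hence $a\,\Gamma(a+1,y)\Gamma(a-1,y)/[(a-1)\Gamma(a,y)^2] > 1$ after converting via $\Gamma(a,y)=\Gamma(a)Q_a(y)$ and the Gamma recurrence $\Gamma(a+1)=a\Gamma(a)$, $\Gamma(a)=(a-1)\Gamma(a-1)$; rearranging gives the lower bound.

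For the upper bound $\Gamma(a,y)^2/[\Gamma(a+1,y)\Gamma(a-1,y)] < 1$ for all real $a$, I would work directly with the incomplete gamma functions rather than the $Q$-normalization (which fails when $\Gamma(a)<0$). The inequality is equivalent to $\Gamma(a,y)^2 < \Gamma(a+1,y)\Gamma(a-1,y)$, i.e. $H_{a+1}(y)H_{a-1}(y)^{-1} < $ something — more directly, it is equivalent to $H_{a+1}(y) > H_a(y)\cdot[\Gamma(a,y)/\Gamma(a-1,y)] \cdot \ldots$; the cleanest formulation is that $\Gamma(a,y)^2 < \Gamma(a+1,y)\Gamma(a-1,y)$ iff $H_{a+1}(y) > \Gamma(a,y)/\Gamma(a-1,y) = H_a(y)$, which is again just the monotonicity of $H_a(y)$ in $a$ from Theorem \ref{Gainc}. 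Since Theorem \ref{Gainc} was proved for all real $a$, the upper bound holds for all real $a$, while the lower bound requires the Gamma-function identity $\Gamma(a-1)\Gamma(a+1)=a/(a-1)\cdot\Gamma(a)^2$ to have a clean sign, restricting to $a>1$ (or to intervals where the relevant Gamma values have consistent signs).

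The main obstacle — really the only subtlety — is bookkeeping the ranges of $a$: the upper inequality is a direct restatement of $H$-monotonicity (Theorem \ref{Gainc}) and inherits its full validity $a\in\mathbb{R}$, but the lower inequality additionally invokes $H_a(y)=(a-1)Q_a(y)/Q_{a-1}(y)$ together with $Q_a(y)^2 > Q_{a-1}(y)Q_{a+1}(y)$ (the $x\to 0$ limit of Corollary \ref{TurP}), and the latter as stated is guaranteed only where the normalizations $Q$ are positive, i.e. $a>0$, with the extra factor forcing $a>1$. So the proof is essentially two lines of algebra each, and the only care needed is to state which ambient range of $a$ each half lives in; no genuine analytic difficulty remains once Theorems \ref{Gainc} and \ref{cococo} (and hence Corollary \ref{TurP} in the central limit) are in hand.
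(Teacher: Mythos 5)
Your upper-bound argument coincides with the paper's: since $\Gamma(a,y)>0$ for every real $a$ and $y>0$, the inequality $\Gamma(a,y)^2<\Gamma(a+1,y)\Gamma(a-1,y)$ is equivalent to $H_a(y)<H_{a+1}(y)$, i.e.\ to Theorem \ref{Gainc}, and it inherits the full range $a\in{\mathbb R}$. Likewise, the ingredient you name in your first route and in your closing paragraph for the lower bound --- the Tur\'an inequality $Q_a(y)^2>Q_{a-1}(y)Q_{a+1}(y)$ (the $x\to 0$ form of the $Q$-analogue of Corollary \ref{TurP}, coming from Theorem \ref{cococo}) combined with $\Gamma(a-1)\Gamma(a+1)=\frac{a}{a-1}\Gamma(a)^2$ --- is exactly the paper's ingredient, stated there as ``$H_a(y)/(a-1)$ is decreasing in $a$''. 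So in outline you follow the same route as the paper.

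However, the derivation you actually write out for the lower bound fails. In your ``cleaner route'' you invoke $H_{a+1}(y)>H_a(y)$; but $H_a$ increasing is precisely the upper bound, and the chain $a\,Q_{a+1}/Q_a>(a-1)\,Q_a/Q_{a-1}$ is nothing but $\Gamma(a+1,y)\Gamma(a-1,y)>\Gamma(a,y)^2$, so multiplying by $a/(a-1)>1$ only gives $\Gamma(a,y)^2/[\Gamma(a+1,y)\Gamma(a-1,y)]<a/(a-1)$, a statement weaker than the upper bound and not the claimed inequality $>(a-1)/a$; no rearrangement of it produces the lower bound. The lower bound needs the \emph{opposite} monotonicity: from $Q_a/Q_{a-1}$ decreasing in $a$ (equivalently $H_{a+1}(y)/a<H_a(y)/(a-1)$, equivalently $Q_a^2>Q_{a-1}Q_{a+1}$) one gets $\frac{\Gamma(a,y)^2}{\Gamma(a)^2}>\frac{\Gamma(a-1,y)\Gamma(a+1,y)}{\Gamma(a-1)\Gamma(a+1)}$, hence $\frac{\Gamma(a,y)^2}{\Gamma(a+1,y)\Gamma(a-1,y)}>\frac{\Gamma(a)^2}{\Gamma(a-1)\Gamma(a+1)}=\frac{a-1}{a}$, valid for $a>1$ where $\Gamma(a-1)$, $\Gamma(a)$, $\Gamma(a+1)$ are all positive. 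Your displayed intermediate inequality is also garbled (its right-hand side does not simplify to the stated conclusion); replace both it and the ``cleaner route'' by the two-line computation just given --- which is what the paper's citation of the decreasing ratio amounts to --- and the proof is complete.
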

\begin{proof}
The upper bound is a consequence of the fact that $H_a(y)$ is increasing as a function of $a$ 
(Theorem \ref{Gainc}) while the lower bound is true because $H_a (y)/(a-1)$ is decreasing as a function of $a$.
\end{proof}

From the upper bound in Corollary \ref{Turga}, we get, using this inequality together with the three term recurrence relation, similarly as we did for
the ratio $h_a (y)=\gamma (a,y)/\gamma (a-1,y)$, a bound for $H_a (y)$ which leads to a lower bound for 
$\Gamma (a,y)$:
\begin{corollary}
For real $a$ and positive $y$ the following holds:
\begin{equation}
H_a (y)<\Frac{1}{2}\left[y+a+\sqrt{(y-a)^2+4y}\right],
\end{equation}
\begin{equation}
\label{boundGAM}
\Gamma (a,y)>\Frac{2y^a e^{-y}}{y+1-a+\sqrt{(y-a-1)^2+4y}}.
\end{equation}
\end{corollary}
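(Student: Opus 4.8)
The plan is to mimic exactly the manipulation already carried out for the ratio $h_a(y)=\gamma(a,y)/\gamma(a-1,y)$, but starting from the upper bound in Corollary \ref{Turga} instead of the one in Corollary \ref{Turga0}. First I would write $H_a(y)=\Gamma(a,y)/\Gamma(a-1,y)$ and eliminate $\Gamma(a+1,y)$ from the Turán-type upper bound $\Gamma(a,y)^2<\Gamma(a+1,y)\Gamma(a-1,y)$ by using the three-term recurrence $\Gamma(a+1,y)-(a+y)\Gamma(a,y)+(a-1)y\,\Gamma(a-1,y)=0$ (this is the incomplete gamma analogue of (\ref{pa2}), obtainable from (\ref{RecQ}) in the limit $x\to 0$). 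Dividing through by $\Gamma(a-1,y)^2>0$ turns the inequality into the quadratic inequality $H_a(y)^2-(a+y)H_a(y)+(a-1)y<0$ in the unknown $H_a(y)$.

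Next I would solve this quadratic. Since the leading coefficient is positive, the inequality forces $H_a(y)$ to lie strictly between the two roots $\tfrac12\bigl[(y+a)\pm\sqrt{(y-a)^2+4y}\bigr]$; here the discriminant is $(a+y)^2-4(a-1)y=(y-a)^2+4y\ge 0$, so the roots are real. Taking the larger root gives the asserted upper bound
\begin{equation}
H_a(y)<\Frac{1}{2}\left[y+a+\sqrt{(y-a)^2+4y}\right],
\end{equation}
valid for all real $a$ and $y>0$ because Corollary \ref{Turga} and the recurrence hold there. (The smaller root would give a lower bound for $H_a(y)$, but that is not what is claimed here, so I would not pursue it.)

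For the second inequality I would feed this bound into the inhomogeneous recurrence that expresses $\Gamma(a,y)$ through $\Gamma(a+1,y)$. Specifically, from $\Gamma(a+1,y)=a\,\Gamma(a,y)+y^a e^{-y}$ (again the $x\to 0$ limit of (\ref{RecQ}), equivalently \cite[Eq.~8.8.2]{Paris:2010:IGR}) one has $\Gamma(a,y)=\bigl[\Gamma(a+1,y)-y^a e^{-y}\bigr]/a$; but rather I would use the shift-down form directly. Writing $\Gamma(a-1,y)=\Gamma(a,y)/H_a(y)$ is not quite convenient, so instead I would use $\Gamma(a,y)=y^{a-1}e^{-y}+ (a-1)\Gamma(a-1,y)$ in the form $\Gamma(a,y)=y^{a-1}e^{-y}+(a-1)\Gamma(a,y)/H_a(y)$, i.e. $\Gamma(a,y)\bigl(1-(a-1)/H_a(y)\bigr)=y^{a-1}e^{-y}$. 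When $H_a(y)>a-1$ this yields $\Gamma(a,y)=y^{a-1}e^{-y}H_a(y)/(H_a(y)-a+1)$; since $t\mapsto t/(t-a+1)$ is decreasing for $t>a-1$, replacing $H_a(y)$ by its upper bound gives a lower bound for $\Gamma(a,y)$, and after simplifying $H_a(y)/(H_a(y)-a+1)$ with $H_a(y)=\tfrac12[y+a+\sqrt{(y-a)^2+4y}]$ one gets exactly
\begin{equation}
\Gamma(a,y)>\Frac{2y^a e^{-y}}{y+1-a+\sqrt{(y-a-1)^2+4y}}.
\end{equation}
One checks the algebraic identity $\sqrt{(y-a)^2+4y}-(a-y) \ \cdot\ (\dots)$ reduces the denominator to $y+1-a+\sqrt{(y-a-1)^2+4y}$; this is the only real computation, and I expect the main (mild) obstacle to be precisely this bookkeeping — verifying the radicand simplification and confirming that the sign conditions ($H_a(y)>a-1$, which follows from $H_a(y)>y$ when $y\ge a-1$ and is automatic otherwise since the bound's denominator stays positive) hold for all real $a$ so that the final inequality is unconditional.
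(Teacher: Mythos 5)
Your first display is established exactly as the paper intends: the upper bound of Corollary \ref{Turga} combined with the three-term recurrence $\Gamma(a+1,y)-(a+y)\Gamma(a,y)+(a-1)y\,\Gamma(a-1,y)=0$ gives $H_a(y)^2-(a+y)H_a(y)+(a-1)y<0$, the discriminant is $(y-a)^2+4y>0$, and the larger root gives the stated bound for all real $a$ and $y>0$.

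The passage to the lower bound for $\Gamma(a,y)$, however, has a genuine gap. First, the ``algebraic identity'' you appeal to does not exist: writing $U_a=\tfrac12\bigl[y+a+\sqrt{(y-a)^2+4y}\bigr]$, your expression $y^{a-1}e^{-y}\,U_a/(U_a-a+1)$ simplifies (using that the two roots of the quadratic have product $(a-1)y$) to $2y^{a}e^{-y}/\bigl(y-a+\sqrt{(y-a)^2+4y}\bigr)$, which is a \emph{different} expression from the claimed $2y^{a}e^{-y}/\bigl(y+1-a+\sqrt{(y-a-1)^2+4y}\bigr)$; at $a=2$, $y=1$ the two evaluate to about $0.595$ and $0.260$ respectively, so no bookkeeping can make them coincide. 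Second, your monotonicity step is only half true: $t\mapsto t/(t-a+1)=1+(a-1)/(t-a+1)$ is decreasing on $(a-1,\infty)$ only for $a>1$ and is increasing for $a<1$, so substituting the upper bound for $H_a$ yields a lower bound for $\Gamma(a,y)$ only when $a>1$, whereas the corollary is asserted for all real $a$. (For $a>1$ your bound is in fact sharper than the stated one, since $\sqrt{(y-a)^2+4y}<1+\sqrt{(y-a-1)^2+4y}$, so it would imply the corollary there, but you prove neither this comparison nor anything for $a\le 1$.) The route the paper follows — and which you mention and then discard — avoids both problems: apply the $H$-bound at index $a+1$, i.e. $\Gamma(a+1,y)<\tfrac12\bigl[y+a+1+\sqrt{(y-a-1)^2+4y}\bigr]\Gamma(a,y)$, together with the upward inhomogeneous recurrence $\Gamma(a+1,y)=a\,\Gamma(a,y)+y^{a}e^{-y}$; this gives $y^{a}e^{-y}<\tfrac12\bigl(y+1-a+\sqrt{(y-a-1)^2+4y}\bigr)\Gamma(a,y)$, and since that coefficient is positive for every real $a$ and $y>0$ (for $a\ge y+1$ the inequality $(y-a-1)^2+4y>(a-1-y)^2$ reduces to $4a>0$), the bound (\ref{boundGAM}) follows for all real $a$ with no case distinction and no monotonicity argument.
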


From the lower bound of Corollary \ref{Turga} we would obtain a bound slightly weaker than that
of Theorem \ref{cococo}.

Summarizing and considering also some bounds obtained from Theorem \ref{mesbobo}, we have:
\begin{theorem}
\label{resum}
The following bounds hold for $y>0$:
\begin{enumerate}
\item{}$\gamma (a,y)>l_1(a,y)=\Frac{1}{a}\left(1+\Frac{y}{a+1}\right)y^{a} e^{-y}$, $a>0$.
\item{}$\gamma (a,y)>l_2 (a,y)=\Frac{1}{2 a r_a}y^a e^{-y}\left[2 r_a + L_a + \sqrt{L_a^2 +4r_a y}\right]$, $a>0$, with 
$r_a=(a+1)/(a+2)$, $L_a=y-a-1$.
\item{}$\gamma (a,y)<u_1 (a,y)=\Frac{a +1}{a (a+1-y)}y^{a}e^{-y}$, $y<a+1$, $a>0$.
\item{}$\Gamma (a,y)>L_1 (a,y)=\left(1+A\Frac{a-1}{y}\right)y^{a-1}e^{-y}$, $a\ge 1$, with $A=0$ if $a<2$ and $A=1$ otherwise.
\item{}$\Gamma (a,y)>L_2 (a,y)= \left[y+1-a+\sqrt{(y-a-1)^2+4y}\right]^{-1}2y^a e^{-y}$, $a\in {\mathbb R}$.
\item{}$\Gamma (a,y)<U_1 (a,y)=\Frac{1}{y+1-a}y^{a}e^{-y}$, $y>a-1$, $a\ge 1$.
\end{enumerate}
\end{theorem}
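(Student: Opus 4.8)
The plan is to assemble Theorem \ref{resum} by collecting and lightly repackaging results already established in the excerpt, so the proof is essentially a bookkeeping exercise specializing the non-central bounds to $x\to 0$ and invoking the central-case lemmas. Recall the key specializations: $c_\mu(0^+,y)=y/\mu$ and $F_\mu(0^+,y)=y^\mu e^{-y}/\Gamma(\mu+1)$, so that $\gamma(a,y)=\Gamma(a)P(a,y)=\Gamma(a)P_a(0,y)$ and $\Gamma(a,y)=\Gamma(a)Q_a(0,y)$, and $F_{a-1}(0^+,y)=y^{a-1}e^{-y}/\Gamma(a)$. Multiplying any bound on $P_a$ or $Q_a$ by $\Gamma(a)$ turns it into a bound on $\gamma(a,y)$ or $\Gamma(a,y)$.

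First I would prove item 1: the lower bound $P_\mu>(1+c_{\mu+1})F_\mu$ established just after Eq.~(\ref{pp2}) gives, on setting $\mu=a$, $x\to 0$, that $P(a,y)>(1+y/(a+1))\,y^a e^{-y}/\Gamma(a+1)$; multiplying by $\Gamma(a)$ yields $\gamma(a,y)>\frac1a\bigl(1+\frac{y}{a+1}\bigr)y^a e^{-y}=l_1(a,y)$. Item 2 is exactly the bound $b_2(a)$ of Eq.~(\ref{unacota}), rewritten with the notation $r_a=(a+1)/(a+2)$, $L_a=y-a-1$: it was derived directly for $\gamma(a,y)$ from the Turán inequality of Corollary~\ref{Turga0} and the three-term recurrence (\ref{pa2}), so item 2 needs only to be restated. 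Item 3 comes from the upper bound (\ref{pp2}), $P_\mu<F_\mu/(1-c_{\mu+1})$ valid when $c_{\mu+1}<1$; at $x=0$ the condition $c_{a+1}(0^+,y)=y/(a+1)<1$ is $y<a+1$, and $F_a/(1-y/(a+1))=\frac{a+1}{a+1-y}\,y^a e^{-y}/\Gamma(a+1)$, so multiplying by $\Gamma(a)$ gives $\gamma(a,y)<\frac{a+1}{a(a+1-y)}y^a e^{-y}=u_1(a,y)$.

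Next, items 4--6 are the $Q$-side statements. Item 5 is the bound (\ref{mes3})/(\ref{mas2}) family: from Eqs.~(\ref{eqiq1}) and (\ref{eqiq2}), $Q_\mu>F_{\mu-1}$ for $\mu\ge1$ and $Q_\mu>(1+c_{\mu-1}^{-1})F_{\mu-1}$ for $\mu\ge2$; at $x=0$, $c_{a-1}^{-1}(0^+,y)=(a-1)/y$, and $F_{a-1}(0^+,y)=y^{a-1}e^{-y}/\Gamma(a)$, so multiplying by $\Gamma(a)$ gives $\Gamma(a,y)>y^{a-1}e^{-y}$ for $a\ge1$ and $\Gamma(a,y)>(1+\tfrac{a-1}{y})y^{a-1}e^{-y}$ for $a\ge2$; these two are packaged into item 4 with the flag $A\in\{0,1\}$. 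Item 5 of the theorem, $\Gamma(a,y)>L_2(a,y)$, is precisely Eq.~(\ref{boundGAM}) (valid for all real $a$), already proven via the upper Turán bound of Corollary~\ref{Turga} and the three-term recurrence. Item 6 comes from bound (\ref{qq1}), $Q_\mu<\frac{1}{c_\mu-1}F_\mu$ valid if $c_\mu>1$: at $x=0$, $c_a(0^+,y)=y/a>1$ means $y>a-1$... more carefully $y>a$, but one uses instead the companion bound $Q_\mu<\frac{1}{1-c_{\mu-1}^{-1}}F_{\mu-1}$ valid if $c_{\mu-1}>1$, i.e.\ $y>a-1$, giving $\Gamma(a,y)<\frac{1}{1-(a-1)/y}y^{a-1}e^{-y}=\frac{y}{y-a+1}y^{a-1}e^{-y}=\frac{1}{y+1-a}y^a e^{-y}=U_1(a,y)$ for $y>a-1$, $a\ge1$.

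The only mildly delicate point — and the thing I would be most careful about — is matching the precise ranges of validity and the index shifts, since several source bounds come in a ``first bound / second bound'' pair with different admissible $\mu$, and the $x\to0$ limit must be taken in the version of the inequality that remains meaningful (e.g.\ for item 5 one uses Corollary~\ref{xyuno}-type reasoning is \emph{not} needed here because $x=0$ forces $xy=0<1$, so one must take the limit in the form valid for all real $a$, namely Eq.~(\ref{boundGAM}), not the $\mu\ge1$ restricted form). Likewise for item 2 versus Merkle's bound $b_1$, one should note it is the sharper $b_2$ that is being recorded. Once the correct source inequality is identified for each item, the proof is just: substitute $x=0$, use $c_a(0^+,y)=y/a$ and $F_a(0^+,y)=y^a e^{-y}/\Gamma(a+1)$, multiply through by the appropriate $\Gamma$-value, and simplify — so I would write the proof as six short paragraphs, one per item, each a single sentence citing the earlier equation and performing the substitution.
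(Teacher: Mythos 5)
Your proposal is correct and matches the paper's own argument, which likewise just maps each item to the earlier results ((\ref{mas1}), (\ref{unacota}), (\ref{mes1}), (\ref{mas2})--(\ref{mas3}), (\ref{boundGAM}), (\ref{mes3})) and specializes via $c_\mu(0^+,y)=y/\mu$ and $F_\mu(0^+,y)=y^\mu e^{-y}/\Gamma(\mu+1)$; your extra care about validity ranges (e.g.\ $y<a+1$ from evaluating $c_{a+1}$ directly at $x=0$, and $y>a-1$ from the $c_{a-1}>1$ condition) is exactly what is needed. The only blemish is a cosmetic mislabeling where you introduce item 4 as ``Item 5 \ldots the (\ref{mes3})/(\ref{mas2}) family'' before correctly deriving it from (\ref{eqiq1})--(\ref{eqiq2}); the mathematics is unaffected.
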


The first inequality in this theorem corresponds to Eq. (\ref{mas1}), the second to (\ref{unacota}), the third to (\ref{mes1}),
the fourth to (\ref{mas2}) and (\ref{mas3}), the fifth to (\ref{boundGAM}) and the last one to (\ref{mes3}).

\section{Comparison with previous bounds}
\label{compar}

Next we compare our bounds with previous bounds. As we will see, our bounds for the non-central distribution
are superior to previous bounds in a wide region of parameters, particularly the upper bounds for $Q_{\mu}(x,y)$.
For the central distributions we observe that our combined bounds give sharper approximations than previous bounds 
for all the range of parameters where they apply.

\subsection{Noncentral case}

We compare our bounds against recent bounds by Paris \cite{Paris:2013:SBF}, Baricz \cite{Baricz:2009:NBF} and Baricz $\&$ Sun 
\cite{Baricz:2009:TBF}. In tables 1 and 2, we label the different bounds for the Marcum-Q function as follows.
\begin{enumerate}
\item{Bounds by Baricz $\&$ Sun}: upper bounds {\bf BU1} \cite[Eq. (15)]{Baricz:2009:NBF}; lower bounds {\bf LB1} 
\cite[Eq. (8)]{Baricz:2009:TBF} and {\bf LB2} \cite[Eq. (17)]{Baricz:2009:TBF}. These are bounds in terms of a Bessel function
and two or three error functions.
\item{Bounds by Paris}: upper bounds {\bf UP1} \cite[Eq. (3.2)]{Paris:2013:SBF} and {\bf UP2} \cite[Eq. (3.4)]{Paris:2013:SBF};
lower bounds {\bf LP1} \cite[Eq. (3.1)]{Paris:2013:SBF} and {\bf LP2} \cite[Eq. (3.3)]{Paris:2013:SBF}. These are bounds
depending on one incomplete gamma function, and which are sharp for values of $x$ and/or $y$ not much larger than $1/2$.
\item{Our bounds}:  upper bounds {\bf US1A} (Eq. (\ref{mes3})), {\bf US1B} (Eq. (\ref{mes2})) and {\bf US2} (Eq. (\ref{mas1}));
lower bounds {\bf LS1} (Eq. (\ref{mas3}))  and {\bf LS2} (Eq. (\ref{mes1})). These are bounds depending on a Bessel function and
a ratio of Bessel function (which is easy to compute with a continued fration and for which sharp bounds exist 
\cite{Segura:2011:BRM}).
\end{enumerate}

In tables 1 and 2 we compare all the previous bounds for several values of the parameters. We compare the bounds
with the actual values of functions. The column labelled with 
L1 contains the best lower bound and the column U1 the best upper bound (with respect to $Q$); 
the columns $L_2$ and $U_2$ contain the second best lower and upper bounds respectively. 

Inside parenthesis, an estimation of the relative error with one significant digit is given. When $P$ is smaller that $Q$ 
we give the relative error for $P$ instead of $Q$, because this is more significant. 
When $Q$ is smaller than $P$ the value of $y$ in the table is written in bold font. Also, our bounds are given
in bold font.

As we observe from the results of the tables, the bounds given in Theorem \ref{mesbobo} of this paper are superior to previous bounds
for certain combinations of parameters, particularly not close to the transition line $y=x+\mu$; they are not, however,
superior in all case but tend to improve for larger $\mu$. Particularly, the upper bounds are a considerable 
improvement since they are usually sharper than previous bounds. 

\begin{center}
\begin{tabular}{|l|l|l||l|l|}
\hline
 x=1  & L1 & U1 & L2 & U2    \\
y  & & & & \\
\hline
1 & LP2 ($0.02$)  & {\bf US2} ($0.1$) & {\bf LS2} ($0.1$) & UP2 ($0.3$)   \\
\hline
{\bf 4}  & LB1 ($0.07$) & {\bf US1A} ($0.09$)  & {\bf LS1} ($0.1$) &   UP1  ($0.2$)   \\
\hline
{\bf 16}   & LB1 ($0.02$) & {\bf US1A} ($0.05$)  &  {\bf LS1} ($0.01$) & {\bf US1B} ($0.06$)\\
\hline
\hline
x= 4 &   &  &  &      \\
y & & & &  \\
\hline
1  &  {\bf LS2} ($0.04$) & {\bf US2} ($0.08$)  & LB2 ($0.1$) & UB1 ($0.7$)   \\
\hline
4   &  LB1 ($0.1$)   &  UB1 ($0.7$) & LB2 ($0.2$) & {\bf US2} ($0.8$)   \\
\hline
{\bf 16}   & LB1 ($0.03$)    & {\bf US1A} ($0.04$) & LS1 ($0.3$) & {\bf US1B} ($0.1$)   \\
\hline
\hline
x= 16   & &  & &    \\
y &  & & & \\
\hline
1    & {\bf LS2} ($0.007$) & {\bf US2} ($0.04$) & LB2 ($0.08$) & UB1 ($0.8$)    \\
\hline
4     & {\bf LS2} ($0.04$) & {\bf US2} ($0.2$) & LB2 ($0.05$) & UB1 ($0.9$)  \\
\hline
16    & LB1 ($0.07$) & UB1 ($0.8$) & LB2 ($0.08$) & {\bf US2} ($2$)   \\
\hline
{\bf 32}    & LB1 ($0.02$) & {\bf US1A} ($0.1$) & {\bf LS1} ($0.8$) & {\bf US1B} ($0.2$)\\
\hline
\end{tabular}
\captionof{table}{Best bounds for $\mu=1$ and several values of $x$ and $y$}
\end{center}

\begin{center}
\begin{tabular}{|l|l|l||l|l|}
\hline
 x=1  & L1 & U1 & L2 & U2    \\
y  & & & & \\
\hline
1 & {\bf LS2} ($0.0002$)  & {\bf US2} ($0.003$) & LP2 ($0.0006$) & UP2 ($0.4$)   \\
\hline
\bf 16  & LP2 ($0.02$) & {\bf US2} ($1.5$)  & {\bf LS1} ($0.8$) &  UP2  ($3$)   \\
\hline
{\bf 32}   & LP2 ($0.2$) & {\bf US1A} ($0.05$)  &  {\bf LS1} ($0.3$) &   {\bf US1B} ($0.1$)\\
\hline
{\bf 64}   & {\bf LS1} ($0.08$) & {\bf US1A} ($0.007$)  &  LB1 ($0.2$) &   {\bf US1B} ($0.03$)\\
\hline
\hline
x= 16 &   &  &  &      \\
y & & & &  \\
\hline
1  &  {\bf LS2} ($0.0002$) & {\bf US2} ($0.03$)  & LB2 ($4$) & UP2 ($0.3$)   \\
\hline
16   &  {\bf LS2} ($0.05$)   &  {\bf US2} ($0.5$) & LB2 ($4$) & UB1 ($6$)   \\
\hline
{\bf 32}   & LB1 ($2$)    & {\bf US2} ($0.9$) & {\bf LS1} ($3$) & UP2 ($1$)   \\
\hline
{\bf 64}   & LB1 ($0.3$)    & {\bf US1A} ($0.04$) & {\bf LS1} ($0.6$) & {\bf US1B} ($0.07$)   \\
\hline
\hline
x= 32   & &  & &    \\
y &  & & & \\
\hline
1    & {\bf LS2} ($0.0002$) & {\bf US2} ($0.003$) & LB2 ($3$) & UP2 ($1$)    \\
\hline
16     & {\bf LS2} ($0.02$) & {\bf US2} ($0.3$) & LB2 ($1$) & UB1 ($10$)  \\
\hline
32    & {\bf LS2} ($0.1$) & {\bf US2} ($1$) & LB2 ($2$) & UB1 ($9$)   \\
\hline
{\bf 64}    & LB1 ($0.6$) & {\bf US1A} ($0.2$) & {\bf LS1} ($2$) & {\bf US1B} ($0.2$)  \\
\hline
\end{tabular}
\captionof{table}{Best bounds for $\mu=16$ and several values of $x$ and $y$}
\end{center}

We could consider some of our additional bounds, like for instance (\ref{betterlo}) which
is an improvement for some parameter values and, needless to say, further improvement is possible using the convergent bounds. 

\subsection{Central case}

First we compare the several bounds in Theorem \ref{resum} between them and then we compare it with other bounds
available in the literature. A numerical comparison in this case is easier to consider than in the noncentral case,
and 3D plots (not shown) are enough to get the desired information.

The second bound in Theorem \ref{resum}, $l_2$, is sharper that the first bound $l_1$. Notice that $l_1$ is given by the
first two terms in the series for $\gamma (a,y)$ in powers of $y$ \cite[8.7.1]{Paris:2010:IGR}
and that better bounds can be obtained by considering
additional terms (see also the discussion after Eq. (\ref{inhoga})); however $l_2$ is better for sufficiently large 
$y$ and $a$ even if more terms in $l_1$ are considered.

The fifth bound, $L_2$, is generally better than the fourth, $L_1$, but for small $y$ (larger as $a$ is larger) the situation is the opposite (for $A=1$). For instance, for $a=5$ $L_1$ is better when $y<5$ and for $a=100$ when $y<10$. For $A=0$ $L_1$ is
worse and only if $y<1$ (approximately) or $a$ is close to $1$ it is better than $L_2$.  

Comparing the bound $u_1 (a,y)$ with the Qi and Mei bound 
\cite[8.10.2]{Paris:2010:IGR} (but for $a>1$ with the inequality reversed), which is
\begin{equation}
\gamma (a,y)\le U_Q (a,y) = \Frac{y^{a-1}}{a}(1-e^{-y}),
\end{equation}
 we easily obtain
that $u_1$ is superior to this inequality (see also \cite{Qi:1999:SIO}) when
\begin{equation}
a>(y-1+e^{-y})/(1-(y+1)e^{-y})
\end{equation}
which approximately gives $a>y-1$ for not too small $y$. Another upper bound for $\gamma (a,y)$ can be obtained considering
$u_2 (a,y)=\Gamma (a)-L_2 (a,y)$. Numerical experiments show that that $u_2 (a,y)$ has positive values and smaller than $U_Q$ (and therefore superior) in a region containing $y>a$. Combining both bounds in a single bound yields a
bound better than $U_Q$ for any values of $a$ and $y$. Because $u_2 (a,y)$ turns out to be always positive, a simple
way to do this is by taking $u_3(a,x)=[\max \{u_1 (a,y)^{-1},u_2 (a,y)^{-1}\}]^{-1}$; $u_3(a,y)$ turns out to be superior to
$U_Q$ for any $a$ and $y$, and with relative errors with respect to $\gamma (a,y)$ always smaller that $1$ (in 
contrast to $U_Q$) and tending to zero rapidly as we move away for $y=a$.

The fourth bound for $A=0$ is the bound \cite[8.10.2]{Paris:2010:IGR} (but for $a>1$, with the inequality reversed). The case
$A=1$ is an improvement for $a>2$ and for the previous discussion it also follows that the fifth bound is generally superior
to \cite[8.10.2]{Paris:2010:IGR} (except for $y$ or $a$ small).

Finally, we compare our bounds against \cite[8.10.11]{Paris:2010:IGR} for the case $a>1$, which we write as:
\begin{equation}
\begin{array}{l}
\gamma (a,x)\ge l_H (a,x)=\Gamma (a)(1-e^{-d_a y})^a,\,d_a=(\Gamma(1+a))^{-1/a},\\
\Gamma (a,x)\ge L_H (a,x)= \Gamma (a)(1-(1-e^{-y})^a).
\end{array}
\end{equation}
We compare the lower bound $l_H$ for $\gamma (a,x)$ with the lower bounds $l_1$, $l_2$ and $l_3=\Theta(U_1)(\Gamma (a)-U_1)$
from Theorem \ref{resum} and the bound $L_H$ for $\Gamma (a,x)$ with $L_1 $, $L_2 $ and $L_3=\Theta(u1)(\Gamma (a)-u_1)$; here
$\Theta$ stands for the step function such that $\Theta(x)=1$ if $x>0$ and $\Theta (x)=0$ if $x\le 0$.

Numerical computations show that 
$l_1$ and $l_2$ are larger than $l_H$ (and then superior to $l_H$) for $y<a$, but also for $y>a$  with $y-a$ moderate. 
On the other hand, as commented before, $l_2$ is generally preferable. With respect to
$l_3$, it appears to be superior to $l_H$ for $y>a$. Combining the bounds $l_2$ and $l_3$ taking $l=\max \{l_2,l_3\}$
the resulting bound $l$ turns out to be better that $l_H$ except for a small region around $y=a$ for $y<6.5$. This region can be eliminated if the bound $l_1$ is also used, but considering additional terms in the series \cite[8.7.1]{Paris:2010:IGR}
six terms are enough).

Finally, $L_1$ and $L_2$ are larger than $L_H$ in a region containing $y>a$ while $L_3$ is larger than $L_H$ for $y<a$. As commented
before $L_2$ is generally better, and taking $L=\max\{L_2, L_3\}$ we obtain a bound which is better than $L_H$ in the whole range.

\section*{Acknowledgements}
The author acknowledges some financial support from {\emph{Ministerio de Econom\'{\i}a y Competitividad}} (project MTM2012-34787).

\end{document}